\def\thtext#1{
  \catcode`@=11
  \gdef\@thmcountersep{. #1}
  \catcode`@=12
}
\def\threst{
  \catcode`@=11
  \gdef\@thmcountersep{.}
  \catcode`@=12
}
\newtheorem{thm}{Theorem}[section]
\newtheorem{prop}[thm]{Proposition}
\newtheorem{cor}[thm]{Corollary}
\newtheorem{ass}[thm]{Assertion}
\newtheorem{lem}[thm]{Lemma}
\newtheorem{conj}[thm]{Conjecture}
\newtheorem{prb}[thm]{Problem}
\newtheorem{exe}[thm]{Exercise}
\newenvironment{rk}{\trivlist\item[\hskip \labelsep{\bf Remark.}]}{\endtrivlist}
\newenvironment{proof}{\trivlist\item[\hskip \labelsep{\bf Proof.}]}{\endtrivlist}
\newenvironment{examp}{\trivlist\item[\hskip \labelsep{\bf Example.}]}{\endtrivlist}
\newdimen\fgm
 \def\Pic#1#2#3#4{
   \begin{figure}
\begin{center}\includegraphics[height=#4\fgm]{#1}\end{center}
     \caption{\label{#3} #2}
  \end{figure}
 }
 \def\<{\langle}
 \def\>{\rangle}
 \def\.{.\spacefactor\@m}
\def\R{\mathbb R}
\def\r{\rho}
\def\g{\gamma}
\def\om{\omega}
\def\G{\Gamma}
\def\D{\Delta}
\def\mst{\operatorname{mst}}
\def\smt{\operatorname{smt}}
\def\mf{\operatorname{mf}}
\def\sr{\operatorname{sr}}
\def\Vor{\operatorname{Vor}}
\def\ch{\operatorname{ch}}
\def\tw{\operatorname{tw}}
\def\cM{{\cal M}}
\def\cG{{\cal G}}
\def\v{\varphi}
\def\:{\colon}
\def\c{\circ}
\def\d{\partial}
\begin{document}

\begin{center}
{\Large \sc Optimal Networks}
\end{center}

\medskip
\begin{center}

{A.\,O.~Ivanov and A.\,A.~Tuzhilin}
\end{center}

\medskip
\begin{quote}
 {\it \qquad
The aim of this mini-course is to give an introduction in Optimal Networks theory. Optimal networks appear as solutions of the following natural problem: How to connect a finite set of points in a metric space in an optimal way? We cover three most natural types of optimal connection: spanning trees\/ {\rm (}connection without additional road forks\/{\rm)}, shortest trees and locally shortest trees, and minimal fillings.
 }
\end{quote}

\bigskip

\section{Introduction: Optimal Connection}
\markright{\thesection.~Introduction.}
This mini-course was given in the First Yaroslavl Summer School on Discrete and Computational Geometry in August~2012, organized by International Delaunay Laboratory ``Discrete and Computational Geometry'' of Demidov Yaroslavl State University. We are very thankful to the organizers for a possibility to give lectures their and to publish this notes, and also for their warm hospitality during the Summer School. The real course consisted of three 1 hour lectures, but the division of these notes into sections is independent on the lectures structure. The video of the lectures can be found in the site of the Laboratory (\verb^http://dcglab.uniyar.ac.ru^). The main reference is our books~\cite{ITBookWP} and~\cite{ITBookRFFI}, and the paper~\cite{ITGromov} for Section~\ref{sec:mf}.

Our subject is optimal connection problems, a very popular and important kind of geometrical optimization problems. We all seek what is better, so optimization problems attract specialists during centuries. Geometrical optimization problems related to investigation of critical points of geometrical functionals, such as length, volume, energy, etc. The main example for us is the length functional, and the corresponding optimization problem consists in finding of length minimal connections.

\subsection{Connecting Two Points}
If we have to points $A$ and $B$ in the Euclidean plane $\R^2$, then, as we know from the elementary school, the shortest curve joining $A$ and $B$ is unique and coincides with the straight segment $AB$, so optimal connection problem is trivial in this case. But if we change the way of distance measuring and consider, for example, so-called Manhattan plane, i.e\. the plane $\R^2$ with fixed standard coordinates $(x,y)$ and the distance function $\r_1(A,B)=|a_1-b_1|+|a_2-b_2|$, where $A=(a_1,a_2)$ and $B=(b_1,b_2)$, then it is not difficult to verify that in this case there are infinitely many shortest curves connecting $A$ and $B$. Namely, if $0\le a_1<b_1$ and $0\le a_2<b_2$, then any monotonic curve $\g(t)=\bigl(x(t),y(t)\bigr)$, $t\in[0,1]$,  $\g(0)=A$, $\g(1)=B$, where functions $x(t)$ and $y(t)$ are monotonic, are the shortest, see~Figure~\ref{fig:manh}, left. Another new effect that can be observed in this example is as follows. In the Euclidean plane a curve such that each its sufficiently small piece is a shortest curve joining its ends (so-called {\em locally shortest curve\/}) is a shortest curve itself. In the Manhattan plane it is not so. The length of a locally shortest curve having the form of the letter $\Pi$, see Figure~\ref{fig:manh}, right,  can be evidently decreased.

\Pic{manh}{The shortest curves connecting a pair of points in Manhattan plane (left), and locally shortest but not shortest curve in this plane (right).}{fig:manh}{144}

Similar effects can be observed in the surface of standard sphere $S^2\subset\R^3$. Here the shortest curve joining a pair of points is the lesser arc of the great circle (the cross-section of the sphere by a plane passing through the origin). Two opposite points are connected by infinitely many shortest curves, and if points $A$ and $B$ are not opposite, then the corresponding great circle is unique and it is partitioned into two arcs, both of them are locally shortest, one is the (unique) shortest, but the other one is not. (Really speaking, the difference with the Manhattan plane consists in the fact that for the case of the sphere any directional derivative of the length of any locally shortest arc with respect to its deformation preserving the ends is equal to zero).

\begin{exe}
For a pair of points on the surface of the cube describe shortest and locally shortest curves. Find out an infinite family of locally shortest curves having pairwise distinct lengths.
\end{exe}

\subsection{Connecting Many Points: Possible Approaches}
Let us consider general situation, when we are given with a finite set $M=\{A_1,\ldots,A_n\}$ of points in a metric space $(X,\r)$, and we want to connect them in some optimal way in the sense of the total length of the connection. We are working under assumption that we already know how to connect pairs of points in $(X,\r)$, therefore we need just to organize the set of shortest curves in appropriate way. There are several natural statements of the problem, and we list here the most popular ones.

\subsubsection{No Additional Forks Case: Spanning Trees}
We do not allow additional forks, that is, we can switch between the shortest segments at the points from $M$ only. As a result, we obtain a particular case of Graph Theory problem about minimal spanning trees in a connected weighted graph. We recall only necessary concepts of Graph Theory, the detail can be found, for example in~\cite{Emel}.

Recall that a ({\em simple\/}) {\em graph\/} can be considered as a pair $G=(V,E)$, consisting of a finite set $V=\{v_1,\ldots,v_n\}$ of {\em vertices\/} and a finite set $E=\{e_1,\ldots,e_m\}$ of {\em edges}, where each edge $e_i$ is a two-element subset of $V$. If $e=\{v,v'\}$, then we say that $v$ and $v'$ are {\em neighboring}, edge $e$ {\em joins\/} or {\em connects\/} them, the edge $e$ and each of the vertices $v$ and $v'$ are {\em incident}. The number of vertices neighboring to a vertex $v$ is called the {\em degree of $v$} and is denoted by $\deg v$. A graph $H=(V_H,E_H)$ is said to be a {\em subgraph\/} of a graph $G=(V_G,E_G)$, if $V_H\subset V_G$ and $E_H\subset E_G$. The subgraph $H$ is called {\em spanning}, if $V_H=V_G$.

A {\em path $\g$} in a graph $G$ is a sequence $v_{i_1},e_{i_1},v_{i_2}\ldots,e_{i_k}v_{i_{k+1}}$ of its vertices and edges such that each edge $e_{i_s}$ connects vertices $v_{i_s}$ and $v_{i_{s+1}}$. We also say that the path $\g$ connects the vertices $v_{i_1}$ and $v_{i_{k+1}}$ which are said to be {\em ending vertices\/} of the path. A path is said to be {\em cyclic}, if its ending vertices coincide with each other. A cyclic path with pairwise distinct edges is referred as a {\em simple cycle}. A graph without simple cycles is said to be acyclic. A graph is said to be {\em connected}, if any its two vertices can be connected by a path. An acyclic connected graph is called a {\em tree}.

If we are given with a function $\om\:E\to \R$ on the edge set of a graph $G$, then the pair $(G,\om)$ is referred as a {\em weighted graph}. For any subgraph $H=(V_H,E_H)$ of a weighted graph $\bigr(G=(V_G,E_H),\om\bigl)$ the value $\om(H)=\sum_{e\in E_H}\om(e)$ is called the {\em weight of $H$}. Similarly, for any path $\g=v_{i_1},e_{i_1},v_{i_2}\ldots,e_{i_k}v_{i_{k+1}}$ the value $\om(\g)=\sum_{s=1}^k\om(e_{i_s})$ is called the {\em weight of $\g$}.

For a weighted connected graph $\bigl(G=(V_G,E_G),\om\bigr)$ with positive weight function $\om$, a spanning connected subgraph of minimal possible weight is called {\em minimal spanning tree}. The positivity of $\om$ implies that such subgraph is {\em acyclic}, i.e\. it is a tree indeed. The weight of any minimal spanning tree for $(G,\om)$ is denoted by $\mst(G,\om)$.

Optimal connection problem without additional forks can be considered as minimal spanning tree problem for a special graph. Let  $M=\{A_1,\ldots,A_n\}$ be a finite set of points in a metric space $(X,\r)$ as above. Consider the complete graph $K(M)$ with vertex set $M$ and edge set consisting of all two-element subsets of $M$. In other words, any two vertices $A_i$ and $A_j$ are connected by an edge in $K(M)$. By $A_iA_j$ we denote the corresponding edge. The number of edges in $K(M)$ is, evidently, $n(n-1)/2$. We define the positive weight function $\om_\r(A_iA_j)=\r(A_i,A_j)$. Then any minimal spanning tree $T$ in $K(M)$ can be considered as a set of shortest curves in $(X,\r)$ joining corresponding points and forming a network in $X$ connecting $M$ without additional forks in an optimal way, i.e\. with the least possible length. Such a network is called a {\em minimal spanning tree for $M$ in $(X,\r)$}. Its total weight $\om_\r(T)$ is called {\em length\/} and is denoted by $\mst_X(M)$. In Section~\ref{sec:mst} we speak about minimal spanning trees in more details.

\subsubsection{Shortest tree: Fermat--Steiner Problem}
But already P.~Fermat and C.\,F.~Gauss understood that additional forks can be profitable, i.e\. can give an opportunity to decrease the length of optimal connection. For example, see Figure~\ref{fig:tri}, if we consider the vertex set $M=\{A_1,A_2,A_3\}$ of a regular triangle with side $1$ in the Euclidean plane, then the corresponding graph $K(M)$ consists of three edges of the same weight $1$ and each minimal spanning tree consists of two edges, so $\mst_{\R^2}(M)=2$. But if we add the center $T$ of the triangle and consider the network consisting of three straight segments $A_1T$, $A_2T$, $A_3T$, then its length is equal to $3\frac{2}{3}\frac{\sqrt3}{2}=\sqrt3<2$, so it is shorter than the minimal spanning tree.

\Pic{tri}{Minimal spanning tree (left), shortest tree (center), and minimal filling, connecting the vertex set of regular triangle in Euclidean plane.}{fig:tri}{144}

This reasoning leads to the following general definition. Let  $M=\{A_1,\ldots,A_n\}$ be a finite set of points in a metric space $(X,\r)$ as above. Consider a larger finite set $N$, $M\subset N\subset X$, and a minimal spanning tree for $N$ in $X$. Then this tree contains $M$ as a subset of its vertex set $N$, but also may contain some other additional vertices-forks. Such additional vertices are referred  as {\em Steiner points}. Further, we define a value $\smt_X(M)=\inf_{N:M\subset N\subset X}\mst_X(N)$ and call it the {\em length of shortest tree connecting $M$} or of {\em Steiner minimal tree for $M$}. If this infimum attains at some set $N$, then each minimal spanning tree for this $N$ is called a {\em shortest tree\/} or a {\em Steiner minimal tree\/} connecting $M$. Famous Steiner problem is the problem of finding a shortest tree for a given finite subset of a metric space. We will speak about Steiner problem in more details in Section~\ref{sec:smt}. The shortest tree for the vertex set of a regular triangle in the Euclidean plane is depicted in Figure~\ref{fig:tri}.

\subsubsection{Minimizing over Different Ambient Spaces: Minimal Fillings}
Shortest trees give the least possible length of connecting network for a given finite set in a fixed ambient space. But sometimes it s possible to decrease the length of connection by choosing another ambient space. Let  $M=\{A_1,\ldots,A_n\}$ be a finite set of points in a metric space $(X,\r)$ as above, and consider $M$ as a finite metric space with the distance function $\r_M$ obtained as the restriction of the distance function $\r$. Consider an isometric embedding $\v\:(M,\r_M)\to (Y,\r_Y)$ of this finite metric space $(M,\r_M)$ into a (compact) metric space $(Y,\r_Y)$ and consider the value $\smt_Y\bigl(\v(M)\bigr)$. It could be less than $\smt_X(M)$. For example, the vertex set of the regular triangle with side $1$ can be embedded into Manhattan plane as the set $\bigl\{(-1/2,0),(0,1/2),(1/2,0)\bigr\}$, see Figure~\ref{fig:tri}. Than the unique additional vertex of the shortest tree is the origin and the length of the tree is $3/2<\sqrt3$.
So, for a finite metric space $\cM=(M,\r_M)$, consider the value $\mf(\cM)=\inf_\v\smt_Y\bigl(\v(M)\bigr)$ which is referred as {\em weight of minimal filling\/} of the finite metric space $\cM$. Minimal fillings can be naturally defined in terms of weighted graphs and can be considered as a generalization of Gromov's concept of minimal fillings for Riemannian manifolds. We speak about them in more details in Section~\ref{sec:mf}.

\section{Minimal Spanning Trees}
\markright{\thesection.~Minimal Spanning Trees}\label{sec:mst}
In this section we discuss minimal spanning trees construction in more details. As we have already mentioned above, in this case the problem can be stated in terms of Graph Theory for an arbitrary connected weighted graph. But geometrical interpretation permits to speed up the algorithms of Graph Theory.

\subsection{General Case: Graph Theory Approach}
We start with the Graph Theory problem of finding a minimal spanning tree in a connected weighted graph. It is not difficult to verify that direct enumeration of all possible spanning subtrees of a connected graph leads to an exponential algorithm.

To see that, recall well-known Kirchhoff theorem counting the number of spanning subtrees. If $G=(V,E)$ is a connected graph with enumerated vertex set $V=\{v_1,\ldots,v_n\}$, then its {\em Kirchhoff matrix\/} is defined as $(n\times n)$-matrix $B_G=(b_{ij})$ with elements
$$
b_{ij}=\left[\begin{array}{ll} \deg v_i & \quad\text{if $i=j$,}\\
                              -1 &\quad \text{if $\{v_i,v_j\}\in E$,}\\
                              0 & \quad \text{otherwise.}
\end{array}\right.
$$
Then the following result based on elementary Graph Theory and Binet--Cauchy formula for determinant calculation is valid, see proof, for example in~\cite{Emel}.

\begin{thm}[Kirchhoff]
For a connected graph $G$ with $n\ge2$ vertices, the number of spanning subtrees is equal to the algebraic complement of any element of the Kirchhoff matrix $B_G$.
\end{thm}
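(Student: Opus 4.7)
The plan is to carry out the classical Matrix--Tree proof via the incidence matrix and the Binet--Cauchy formula. First I would fix an arbitrary orientation of each edge of $G$ and introduce the $n\times m$ signed incidence matrix $D=(d_{v,e})$, with $d_{v,e}=+1$ if $v$ is the head of $e$, $d_{v,e}=-1$ if $v$ is the tail, and $d_{v,e}=0$ otherwise. A direct coordinate computation verifies the factorization $B_G=DD^T$: the diagonal entries count incidences and the off-diagonal entries pick up a single $-1$ for each edge joining the two vertices.

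Next I would dispose of the claim that \emph{all} cofactors of $B_G$ coincide. Each column of $D$ sums to zero, hence every row and column of $B_G$ sums to zero; the vector $(1,\ldots,1)$ lies in the kernel, so $\det B_G=0$. Combined with the symmetry of $B_G$, an elementary linear-algebra argument (add all rows to one row, then all columns to one column) shows that any two cofactors $(-1)^{i+j}M_{ij}$ agree. Thus it suffices to compute one principal cofactor, say the $(n,n)$-one.

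The core of the proof is to evaluate $M_{nn}=\det\bigl(D^{(n)}(D^{(n)})^T\bigr)$, where $D^{(n)}$ is the $(n-1)\times m$ matrix obtained by deleting the last row of $D$. Applying the Binet--Cauchy formula gives
\[
M_{nn}=\sum_{S}\bigl(\det D^{(n)}_S\bigr)^2,
\]
where $S$ ranges over all $(n-1)$-element subsets of the edge set $E$ and $D^{(n)}_S$ is the corresponding $(n-1)\times(n-1)$ submatrix. The proof will be complete once we identify each summand.

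The main obstacle, and the combinatorial heart of the argument, is the dichotomy
\[
\det D^{(n)}_S=\begin{cases}\pm1,&\text{if the edges in $S$ form a spanning tree of $G$,}\\ 0,&\text{otherwise.}\end{cases}
\]
For the vanishing half, if the subgraph on $S$ contains a cycle, then summing the columns of $D^{(n)}_S$ corresponding to that cycle with signs read off from the orientation yields the zero vector, forcing singularity. For the $\pm1$ half, I would induct on $n$: a spanning tree on $n$ vertices has a leaf $v\neq v_n$, the column of its unique incident edge in $D^{(n)}_S$ has a single nonzero entry $\pm1$ in row $v$, and cofactor expansion along that row reduces the determinant to the analogous one for the tree $S\setminus\{e\}$ on the vertex set $V\setminus\{v\}$. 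Substituting this dichotomy into the Binet--Cauchy sum turns $M_{nn}$ into exactly the number of spanning trees, which is the claimed equality.
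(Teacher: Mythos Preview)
Your proposal is correct and follows exactly the approach the paper points to: the paper does not spell out a proof of Kirchhoff's theorem but simply states that it is ``based on elementary Graph Theory and Binet--Cauchy formula for determinant calculation'' and refers the reader to~\cite{Emel}. Your argument---factor $B_G=DD^T$ through the signed incidence matrix, reduce to a single principal cofactor using that rows and columns of $B_G$ sum to zero, apply Binet--Cauchy, and identify the $\pm1/0$ dichotomy for the square minors of $D^{(n)}$---is precisely that classical proof, carried out in full.
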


\begin{examp}
Let $G=K_n$ be the complete graph with $n$ vertices. Than its Kirchhoff matrix has the following form:
 $$
B_{K_n}=\left(\begin{array}{ccccc} n-1 & -1 & -1& \cdots & -1 \\
                                   -1 & n-1&  -1& \cdots & -1 \\
                                   \vdots& \vdots&\vdots&\ddots&\vdots \\
                                   -1&-1&-1&\cdots&n-1
                                     \end{array}\right).
 $$
\end{examp}
The algebraic complement of the element $b_{nn}$ is equal to
 $$
\left|\begin{array}{cccc} n-1 & -1 & \cdots & -1 \\
                            -1 & n-1& \cdots & -1 \\
                         \vdots& \vdots&\ddots&\vdots \\
                              -1&-1&\cdots&n-1
                                     \end{array}\right|=
\left|\begin{array}{ccc} 1 & \cdots & 1 \\
                            -1 & \cdots & -1 \\
                         \vdots& \ddots&\vdots \\
                              -1&\cdots&n-1
                                     \end{array}\right|=
\left|\begin{array}{cccc} 1 & 1& \cdots & 1 \\
                          0 & n& \cdots & 0 \\
                         \vdots& \vdots&\ddots&\vdots \\
                          0 & 0&\cdots & n
                                     \end{array}\right|=n^{n-2},
 $$
where the first equality is obtained by change of the first row by the sum of all the rows, and the second equality is obtained by change of the $i$th row, $i\ge2$, by the sum of it with the first row.

\begin{cor}
The complete graph with $n$ vertices contains $n^{n-2}$ spanning trees.
\end{cor}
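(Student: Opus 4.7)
The plan is to deduce the count directly from Kirchhoff's theorem just stated, so I fix the Kirchhoff matrix $B_{K_n}$ displayed in the example and choose the $(n,n)$-cofactor. This reduces the question to computing the determinant of the $(n-1)\times(n-1)$ matrix whose diagonal entries equal $n-1$ and whose off-diagonal entries equal $-1$.

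To evaluate this determinant I would perform two successive sweeps of row operations. First, replace the first row by the sum of all $n-1$ rows. Every column of the full matrix $B_{K_n}$ sums to $0$, so after deleting the last row the column sums of the minor are each equal to $1$, and the new first row becomes $(1,1,\ldots,1)$. Second, add this new first row to each of the remaining $n-2$ rows; the off-diagonal entries $-1$ become $0$ and the diagonal entries $n-1$ become $n$. The matrix is now upper triangular with diagonal $1,n,n,\ldots,n$, so its determinant equals $n^{n-2}$, and by Kirchhoff's theorem this is the number of spanning trees.

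The only step requiring insight is spotting the right pair of row operations; the remainder is mechanical. An alternative route would be the Pr\"ufer bijection between labeled trees on $\{1,\ldots,n\}$ and sequences in $\{1,\ldots,n\}^{n-2}$, which produces the same count without any linear algebra, but it requires constructing and verifying encoding and decoding maps that lie outside the spectral framework already set up by Kirchhoff's theorem, so the cofactor computation is the most economical path given what the text has developed.
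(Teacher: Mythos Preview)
Your proposal is correct and follows essentially the same path as the paper: the paper likewise takes the $(n,n)$-cofactor of $B_{K_n}$, replaces the first row by the sum of all rows to obtain $(1,\ldots,1)$, and then adds this row to each remaining row to reach an upper triangular matrix with diagonal $1,n,\ldots,n$. Your justification via the vanishing column sums of $B_{K_n}$ is the right way to see why the first sweep yields a row of ones.
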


\begin{rk}
Notice that this result is equivalent to Cayley Theorem saying that the total number of trees with $n$ enumerated vertices is equal to $n^{n-2}$.
\end{rk}

But it is a surprising fact, that there exist polynomial algorithms constructing minimal spanning trees. Several such algorithms were discovered in 1960s. We tell about Kruskal's algorithm. Similar Prim's algorithm can be found in~\cite{Emel}.

So, we are given with a connected weighted graph $\bigl(G=(V,E),\om\bigr)$ with positive weight function $\om$.
At the initial step of Kruckal algorithm we construct the graph $T_0=(V,\emptyset)$ and put $E_0=E$. If the graph $T_{i-1}$ and the non-empty set $E_{i-1}\subset E$, $i\le n-1$, have been already constructed, then we choose in $E_{i-1}$ an edge $e_i$ of least possible weight and construct a new graph $T_i=T_{i-1}\cup e_i$ and also a new set $E_i=\{e\in E\mid \text{$e\not\in T_i$ and $T_i\cup e$ is acyclic}\}$. Algorithm stops when the graph $T_{n-1}$ is constructed\footnote{
Here the operation of adding an edge $e$ to a graph $G=(V,E)$ can be formally defined as follows: $G\cup e=\left(V,E\cup\{e\}\right)$. Similarly, $G\setminus e=\left(V,E\setminus\{e\}\right)$.
}.

\begin{thm}[Kruskal]
Under the above notations, the graph $T_{n-1}$ can be constructed for any connected weighted graph $\cG=(G,\om)$, and moreover, $T_{n-1}$
is a minimal spanning tree in $\cG$.
\end{thm}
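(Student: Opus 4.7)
The plan is to prove two claims: (a) the algorithm actually runs for $n-1$ steps, always finding a non-empty $E_{i-1}$, so that the edge $e_i$ can be chosen; and (b) the resulting graph $T_{n-1}$ has minimal total weight among all spanning subgraphs. Both rest on maintaining the invariant that $T_i$ is a forest on $V$ with exactly $n-i$ connected components. For (a), I would induct on $i$: $T_0=(V,\emptyset)$ has $n$ components, and if $i-1<n-1$ then $T_{i-1}$ has at least two components, so by connectedness of $G$ some edge of $G$ joins two distinct components of $T_{i-1}$. Any such edge belongs to $E_{i-1}$ since adjoining an edge between different components of a forest preserves acyclicity; hence $E_{i-1}\neq\emptyset$ and $e_i$ is well defined. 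Adding $e_i$ merges exactly two components, so $T_i$ is again a forest with $n-i$ components, propagating the invariant. In particular $T_{n-1}$ is a connected acyclic spanning subgraph, i.e.\ a spanning tree.

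For (b) I would use the classical exchange argument. Let $T^{*}=T_{n-1}$ with its edges enumerated $e_1,\ldots,e_{n-1}$ in the order produced by the algorithm, and let $T$ be a minimal spanning tree chosen so that $|T\cap T^{*}|$ is as large as possible. Assume, toward contradiction, that $T\neq T^{*}$, and let $i$ be the smallest index with $e_i\notin T$. Adding $e_i$ to $T$ creates a unique cycle $C$; since $T^{*}$ itself is a tree and $e_i\in T^{*}$, the cycle $C$ cannot be contained in $T^{*}$, so there exists $f\in C\setminus\{e_i\}$ with $f\notin T^{*}$. By the minimality of $i$, all of $e_1,\ldots,e_{i-1}$ lie in $T$, so $T_{i-1}\subset T$, and since $f\in T$ we have $T_{i-1}\cup\{f\}\subset T$, which is acyclic. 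Hence $f\in E_{i-1}$ was a legal candidate at step $i$, and Kruskal's greedy choice yields $\om(e_i)\le\om(f)$. Then $T'=(T\setminus\{f\})\cup\{e_i\}$ is again a spanning tree, with $\om(T')\le\om(T)$, so $T'$ is also minimal, while $|T'\cap T^{*}|>|T\cap T^{*}|$, contradicting the choice of $T$. Therefore $T=T^{*}$, and $T^{*}$ is minimal.

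The technical point I expect to be the crux is verifying that the edge $f$ picked inside the cycle $C$ was genuinely available at step $i$, i.e.\ that $T_{i-1}\cup\{f\}$ is acyclic, so that Kruskal's greedy choice translates into the weight inequality $\om(e_i)\le\om(f)$. This is secured by combining two observations: the inclusion $T_{i-1}\subset T$ forced by the minimality of $i$, and the fact that any subset of the edges of the tree $T$ is automatically acyclic. Once this is in place, the rest is bookkeeping of components and edge counts in a forest.
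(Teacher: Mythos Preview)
Your proof is correct and follows essentially the same exchange argument as the paper: both add $e_i$ to a minimal spanning tree $T$, pick an edge $f$ (the paper's $e$) on the resulting cycle that lies outside $T_{n-1}$, and use $T_{i-1}\cup\{f\}\subset T$ to conclude $f\in E_{i-1}$ and hence $\om(e_i)\le\om(f)$. The only difference is packaging---you choose $T$ maximizing $|T\cap T_{n-1}|$ and reach a contradiction in one step, whereas the paper iterates the swap to transform $T$ into $T_{n-1}$ while preserving the weight.
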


\begin{proof}
The set $E_i$ is non-empty for all $0\le i\le n-2$, because the corresponding subgraphs $T_i$ are not connected (the graph $T_i$ has $n$ vertices and $i$ edges), therefore all the graphs $T_1,\ldots,T_{n-1}$ can be constructed. Further, all these graphs are acyclic due to the construction, and $T_{n-1}$ has $n$ vertices and $n-1$ edges, so it is a tree.

To finish the proof it remains to show that the spanning tree $T_{n-1}\subset G$ is minimal. Since the graph $\cG$  has a finite number of spanning trees, a minimal spanning tree does exist. Let $T$ be a minimal spanning tree. We show that it can be reconstructed to the tree $T_{n-1}$ without changing the total weight, so $T_{n-1}$ is also a minimal spanning tree.

To do this, recall that the edges of the tree $T_{n-1}$ are enumerated in accordance with the work of the algorithm. Denote them by $e_1,\ldots,e_{n-1}$ as above, and assume that $e_k$ is the first one that does not belong to $T$. The graph $T\cup e_k$ contains a unique cycle $c\supset e_k$. This cycle $c$ also contains an edge $e$ not belonging to $T_{n-1}$ (otherwise $c\subset T_{n-1}$, a contradiction). Consider the graph $T'=T\cup e_k \setminus e$. It is evidently a spanning tree in $G$, and therefore its weight is not less than the weight of the minimal spanning tree $T$, hence
$$
\om(T')=\om(T)+\om(e_k)-\om(e)\ge \om(T),
$$
and thus, $\om(e_k)\ge\om(e)$.

On the other hand, all the edges $e_1,\ldots, e_{k-1}$ belongs to $T$ by our assumption. Therefore, the graph $T_{k-1}\cup e$ is a subgraph of $T$ and is acyclic, in particular. Hence, $e\in E_{k-1}$ so as $e_k\in E_{k-1}$. But the algorithm has chosen $e_k$, hence $\om(e_k)\le\om(e)$. Thus, $\om(e_k)=\om(e)$, and so $\om(T')=\om(T)$, and therefore $T'$ is a minimal spanning tree in $\cG$. But now $T'$ contains the edges $e_1,\ldots,e_k$ from $T_{n-1}$. Repeating this procedure we reconstruct $T$ to $T_{n-1}$ in the class of minimal spanning trees. Theorem is proved.
\end{proof}

\begin{rk}
For a connected weighted graph with $n$ vertices and $m$ edges the complexity of the Kruskal's algorithm can be naturally estimated as $mn\sim n^3$. The estimation can be improved to $m \log m\sim n^2\log n$. The fastest non-randomized comparison-based algorithm with known complexity belongs to Bernard Chazelle~\cite{Chaz}. It turns out that if the weight function is geometrical, then the algorithms can be improved.
\end{rk}

\subsection{Euclidean Case: Geometrical Approach}
Now assume that $M$ is a finite subset of the Euclidean plane $\R^2$. It turns out that a minimal spanning tree for $M$ in $\R^2$ can be constructed faster than the one for an abstract complete graph with $n=|M|$ vertices by means of  some geometrical reasonings. To do that we need to construct so called Voronoi partition of the plane, corresponding to $M$, and the Delaunay graph on $M$. It turns out that any minimal spanning tree for $M$ in $\R^2$ is a subgraph of the Delaunay graph, see Figure~\ref{fig:vordel}, and the number of edges in this graph is linear with respect to $n$, so the standard Kruskal's algorithm applied to it gives the complexity $n\log n$ instead of $n^2 \log n$ for the complete graph with $n$ vertices.

Let us pass to details. Let $M=\{A_1,\ldots, A_n\}\subset\R^2$ be a finite subset of the plain. The {\em Voronoi cell\/} of the point $A_i$ is defined as
$$
\Vor_M(A_i)=\left\{x\in\R^2\mid \text{$\|x-A_i\|\le\|x-A_j\|$ for all $j$}\right\}.
$$
The Voronoi cell for $A_i$ is a convex polygonal domain which is equal to the intersection of the closed half-planes restricted by the perpendicular bisectors of the segments $A_iA_j$, $j\ne i$. It is easy to verify, that the intersection of any two Voronoi cells has no interior points and that $\cup_i\Vor_M(A_i)=\R^2$. This partition of the plane is referred as {\em Voronoi partition\/} or {\em Voronoi diagram}. Two cells $\Vor_M(A_i)$ and $\Vor_M(A_j)$ are said to be {\em adjacent}, if there intersection contains a straight segment.
The {\em Delaunay graph $D(M)$} is defined as the dual planar graph to the Voronoi diagram. More precisely, the vertex set of $D(M)$ is $M$, and to vertices $A_i$ and $A_j$ are connected by an edge, if and only if their Voronoi cells $\Vor_M(A_i)$ and $\Vor_M(A_j)$ are adjacent. The edges of the Delaunay graph are the corresponding straight segments.

\Pic{vordel}{Voronoi diagram (left) and Delaunay graph together with a minimal spanning tree (right) for a point set in the plane.}{fig:vordel}{196}

It is easy to verify, that if the set $M$ is generic in the sense that no three points lie at a common straight line and no four points lie at a common circle, then the Delaunay graph $D(M)$ is a triangulation, i.e. its bounded faces are triangles. In general case some bounded faces could be inscribed polygons. Anyway, the number of edges of the graph $D(M)$ does not exceed $3n$. It remains to prove the following key Lemma.

\begin{lem}
Any minimal spanning tree for $M\subset\R^2$ is a subgraph of the Delaunay graph $D(M)$.
\end{lem}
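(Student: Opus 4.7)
The natural strategy is proof by contradiction via an edge-exchange argument, with the geometric content isolated in a short lemma about Voronoi cells.

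Suppose some minimal spanning tree $T$ for $M$ contains an edge $A_iA_j$ that is not an edge of $D(M)$. By definition of the Delaunay graph, the cells $\Vor_M(A_i)$ and $\Vor_M(A_j)$ are not adjacent, i.e.\ their intersection contains no straight segment. Because these cells are closed convex polygons and together with the other cells they tile $\R^2$, and because the segment $[A_i,A_j]$ is connected, as one travels from $A_i$ to $A_j$ along this segment one must enter the interior of some third cell $\Vor_M(A_k)$. (This is the geometric heart of the argument; the degenerate configurations in which $[A_i,A_j]$ lies entirely inside $\Vor_M(A_i)\cup\Vor_M(A_j)$ are exactly those in which the two cells meet along a segment, making them adjacent after all.)

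Pick such a point $x$ in the interior of $\Vor_M(A_k)$ lying on the open segment $(A_i,A_j)$. By the defining inequalities of the Voronoi cell, $\|x-A_k\|<\|x-A_i\|$ and $\|x-A_k\|<\|x-A_j\|$. Combining this with the collinearity $\|A_i-A_j\|=\|A_i-x\|+\|x-A_j\|$ and the triangle inequality yields
$$
\|A_i-A_k\|\le\|A_i-x\|+\|x-A_k\|<\|A_i-x\|+\|x-A_j\|=\|A_i-A_j\|,
$$
and symmetrically $\|A_j-A_k\|<\|A_i-A_j\|$. Thus the two ``detour'' edges $A_iA_k$ and $A_jA_k$ are both strictly shorter than the offending edge $A_iA_j$.

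Now apply the standard exchange argument. Removing $A_iA_j$ from the tree $T$ splits it into two components, one containing $A_i$ and one containing $A_j$; the vertex $A_k$ lies in exactly one of them, say in the component of $A_i$. Adding the edge $A_kA_j$ produces a new spanning tree $T'=T\cup\{A_kA_j\}\setminus\{A_iA_j\}$ whose weight is
$$
\om_\r(T')=\om_\r(T)+\|A_k-A_j\|-\|A_i-A_j\|<\om_\r(T),
$$
contradicting the minimality of $T$. Hence every edge of $T$ lies in $D(M)$, proving the lemma. The only real obstacle is the geometric claim that a non-Delaunay segment $[A_i,A_j]$ must cross the interior of another cell; once this is established, the exchange is routine.
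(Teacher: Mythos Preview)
Your exchange argument is a different route from the paper's: the paper works directly, using the cut property of the MST to show that for any MST edge $A_iA_j$ the midpoint $u$ satisfies $\|u-A_k\|>\|u-A_i\|=\|u-A_j\|$ for every $k\ne i,j$, and that this strict inequality persists along a short piece of the perpendicular bisector, so $\Vor_M(A_i)$ and $\Vor_M(A_j)$ share a segment. Your proof, by contrast, has a genuine gap at precisely the step you flag as the geometric heart. The parenthetical justification---that $[A_i,A_j]\subset\Vor_M(A_i)\cup\Vor_M(A_j)$ forces the two cells to share a segment---is false. Take $M=\{(0,0),(2,0),(1,1),(1,-1)\}$: all four Voronoi cells meet only at the single point $(1,0)$, so $\Vor_M(A_1)\cap\Vor_M(A_2)=\{(1,0)\}$ and these two cells are \emph{not} adjacent; yet the segment from $(0,0)$ to $(2,0)$ lies entirely in $\Vor_M(A_1)\cup\Vor_M(A_2)$ and never enters the interior of a third cell. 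So as written your argument fails to produce the point $x$ you need.

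The gap is repairable, but the repair essentially imports the paper's midpoint computation. If the segment stays inside $\Vor_M(A_i)\cup\Vor_M(A_j)$, connectedness forces it to meet $\Vor_M(A_i)\cap\Vor_M(A_j)$; any such meeting point is equidistant from $A_i$ and $A_j$ and lies on the segment, hence is the midpoint $u$. Since the cells are assumed non-adjacent, this intersection is (at most) a point, which must then be a Voronoi vertex: some third site $A_k$ satisfies $\|u-A_k\|=\|u-A_i\|=\|A_i-A_j\|/2$. Because $A_k\ne A_i,A_j$, the triangle inequality is strict, yielding $\|A_i-A_k\|<\|A_i-A_j\|$ and $\|A_j-A_k\|<\|A_i-A_j\|$, after which your exchange goes through. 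So your approach can be completed, but the paper's direct argument is cleaner and avoids this case analysis altogether.
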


\begin{proof}
Let $e=A_iA_j$ be an edge of a minimal spanning tree $T$ for $M$. We have to show that the Voronoi cells $\Vor_M(A_i)$ and $\Vor_M(A_j)$ are adjacent. The graph $T\setminus e$ consists of two connected components, and this partition generates a partition of the set $M$ into two subsets, say $M_1$ and $M_2$. Assume that $A_i\in M_1$ and $A_j\in M_2$. The minimality of the spanning tree $T$ implies that  $\|A_i,A_j\|$ is equal to the distance between the sets $M_1$ and $M_2$, where $\|A_i,A_j\|$ stands for the distance between $A_i$ and $A_k$.

By $u$ we denote the middle point of the straight segment $A_iA_j$, and let $A_k$ be another point from $M$. Assume that $A_k\in M_2$. Due to the previous remark, $\|A_i,A_k\|\ge\|A_i,A_j\|$, therefore
$$
\|u,A_k\|\ge\|A_i,A_k\|-\|A_i,u\|\ge \|A_iA_j\|-\|A_i,u\|=\|A_i,A_j\|/2=\|u,A_i\|=\|u,A_j\|.
$$
On the other hand, if $\|u,A_k\|=\|u,A_i\|$, then we have equalities in both above inequalities. The first one means that  $u$ lies at the straight segment $A_iA_k$, hence $A_k$ lies at the ray $A_iu$. The second equality implies $\|A_iA_k\|=\|A_iA_j\|$, and so $A_k=A_j$, a contradiction. Thus, $\|u,A_k\|>\|u,A_i\|$, that is $u$ does not belong to the cell $\Vor_M(A_k)$ for $k\not\in\{i,\,j\}$.
Thus, $u\in\Vor_M(A_i)\cap\Vor_M(A_j)$,

Since the inequality proved is strict, the same arguments remain valid for points lying close to $u$ on the perpendicular bisector to the segment $A_iA_j$. Therefore, the intersection of the Voronoi cells $\Vor_M(A_i)$ and $\Vor_M(A_j)$ contains a straight segment, that is the cells are adjacent. Lemma is proved.
\end{proof}

\begin{rk}
The previous arguments work in any dimension. But the trouble is that starting from the dimension $3$ the number of edges in the Delaunay graph need not be linear on the number of its vertices.
\end{rk}

\begin{exe}
Verify that the same arguments can be applied to minimal spanning trees for a finite subset of $\R^n$.
\end{exe}

\begin{exe}
Give an example of a finite subset $M\subset\R^3$ such that the Delaunay graph $D(M)$ coincides with the complete graph $K(M)$.
\end{exe}

\begin{prb}
In what metric spaces similar geometrical approach also works? It definitely works for planar polygons with intrinsic metric, see~\cite{ITInTrees}.
\end{prb}

\section{Steiner Trees and Locally Minimal Networks}\label{sec:smt}
\markright{\thesection.~Steiner Trees and Locally Minimal Networks.}
In this section we speak about shortest trees and locally shortest networks in more details. Besides necessary definitions we discuss local structure theorems, Melzak algorithm constructing locally minimal trees in the plane, global results concerning locally minimal binary trees in the plane (so called twisting number theory) and the particular case, locally minimal binary trees with convex boundaries (language of triangular tilings). The details concerning twisting number and tiling realization theory can be found in~\cite{ITBookRFFI} or~\cite{ITBookWP}, and also in~\cite{ITPlane}.

\subsection{Fermat Problem}
The idea that additional forks can help to decrease the length of a connecting network had been already clear to P.~Fermat and his students. It seems that Fermat was the first, who stated the following optimization problem: for given three points $A_1$, $A_2$, and $A_3$ in the plane find a point $X$ minimizing the sum of distances from the points $A_i$, i.e\. minimize the function $F(X)=\sum_i\|A_i,X\|$. For the case when all the angles of the triangle $A_1A_2A_3$ are less than or equal to $120^\c$ the solution was found by E.~Torricelli and later by R.~Simpson. The construction of Torricelli is as follows, see Figure~\ref{fig:Torr}.

\Pic{torr}{Torricelli--Simpson construction, the case $A_i\le120^\c$ (left), and the case $A_3>120^\c$ (right).}{fig:Torr}{196}

On the sides of the triangle $A_1A_2A_3$ construct equilateral triangles $A_iA_jA'_k$, $\{i,j,k\}=\{1,2,3\}$, such that they intersect the initial triangle only by the common sides. Then, as Torricelli proved, the circumscribing circles of these three triangles intersect in a point referred as {\em Torricelli point $T$ of the triangle $A_1A_2A_3$}. If all the angles $A_i$ are less than or equal to $120^\c$, then $T$ lies in the triangle $A_1A_2A_3$ and gives the unique solution to the Fermat problem.\footnote{
An elementary proof can be obtained by rotation $R$ of a copy of the triangle around its vertex, say $A_1$, by $60^\c$ and considering the polygonal line $L$ joining $A_2$, $X$, image $R(X)$ of $X$ under the rotation, and $R(A_3)$. The length of $L$ is equal to $F(X)$, and minimal value of $F(X)$ corresponds to the location of the $X$ such that $L$ is a straight segment.
}
Later Simpson proved that the straight segments $A_iA'_i$ also pass through the Torricelli point, and the lengths of all these three segments are equal to $F(T)$. If one of the angles, say $A_3$, is more than $120^\c$, then the Torricelli point is located outside the triangle and can not be the solution to Fermat problem. In this case the solution is $X=A_3$.

\begin{rk}
So we see, that shortest tree for a triangle in the plane consists of straight segments meeting at the vertices by angles more than or equal to $120^\c$. It turns out, that this {\em $120^\c$-property\/} remains valid in much more general situation.
\end{rk}

\subsection{Local Structure Theorem and Locally Minimal Networks}
Let $M=\{A_1,\ldots,A_n\}$ be a finite subset of Euclidean space $\R^N$, and $T$ is a Steiner tree connecting $M$. Recall that we defined shortest trees as abstract graphs with vertex set in the ambient metric space. In the case of $\R^N$ it is natural to model edges of such graph as straight segments joining corresponding points in the space. The configuration obtained is referred as a {\em geometrical  realization of the corresponding graph}. Below, speaking about shortest trees in $\R^N$ we will usually mean their geometrical realizations.
The local structure of a shortest tree (more exactly of a geometrical realization of the tree) can be easily described.

\begin{thm}[Local Structure]\label{th:LStr}
Let $\G$ be a shortest tree connecting a finite subset $M=\{A_1,\ldots,A_n\}$ in $\R^N$. Then
\begin{enumerate}
 \item all edges of $\G$ are straight segments{\em ;}
 \item any vertex $v\in\G$ of degree $1$ belongs to $M${\em ;}
 \item any two neighboring edges of $\G$ meet in common vertex by angle more than or equal to $120^\c${\em ;}
 \item if the degree of a vertex $v$ is equal to $2$ and $v\not\in M$, then the edges meet at $v$ by $180^\c$ angle.
\end{enumerate}
\end{thm}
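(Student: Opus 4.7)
The plan is to prove all four assertions by a single unified strategy: local variation. In each case, assuming that the claimed property fails somewhere in $\G$, I will exhibit a spanning network of $M$ (in general with a different set of Steiner points $N'$) whose total length is strictly smaller than that of $\G$; this contradicts the fact that $\G$ realizes $\smt_{\R^N}(M)$. Throughout, I will treat $\G$ as its geometric realization and exploit that in $\R^N$ the straight segment is the unique shortest curve joining two points.

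Properties (1) and (2) are the easy cases. For (1), if some edge $uv$ of $\G$ were realized by a curve distinct from the straight segment $[u,v]$, replacing that curve by $[u,v]$ strictly decreases the length of the corresponding edge while preserving the graph structure, contradicting minimality. For (2), if a vertex $v$ of degree~$1$ did not lie in $M$, then removing $v$ together with its unique incident edge yields a tree that still connects $M$ but has strictly smaller total length, again a contradiction. For (4), let $v\notin M$ have degree~$2$ with neighbors $A$ and $B$. Delete $v$ and replace its two incident edges by the single edge $AB$: the result is still a tree containing $M$ in its vertex set, and the length changes by $\|AB\|-\|vA\|-\|vB\|$, which by the triangle inequality is non-positive and is strictly negative unless $v$ lies on the segment $AB$; hence minimality forces the two edges at $v$ to meet at $180^\c$.

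The substantive step is (3), which is the local analogue of the Fermat--Torricelli reasoning. Suppose two edges $vA$ and $vB$ meet at $v$ with angle $\alpha<120^\c$. Pick points $A'$ on $[v,A]$ and $B'$ on $[v,B]$ with $\|vA'\|=\|vB'\|=\varepsilon$ for small $\varepsilon>0$; the triangle $vA'B'$ is isosceles with apex angle $\alpha$ and base angles $(180^\c-\alpha)/2<90^\c$, so all three of its angles are strictly less than $120^\c$. By the Torricelli--Simpson construction the triangle admits an interior Fermat point $F$ satisfying
$$
\|FA'\|+\|FB'\|+\|Fv\|\;<\;\|vA'\|+\|vB'\|,
$$
since $v$ is not itself the Fermat point when the angle at $v$ is below $120^\c$. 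I will then modify $\G$ locally: subdivide the edges $vA$ and $vB$ by the new vertices $A'$, $B'$, delete the two short segments $[v,A']$ and $[v,B']$, and insert the three new segments $[F,A']$, $[F,B']$, $[F,v]$ together with the new Steiner point $F$. The resulting object is still a tree (the modification is confined to a neighborhood of $v$, so neither a cycle nor a disconnection is created), it still connects $M$, and its total length has strictly decreased, contradicting the minimality of $\G$. The main technical point to handle carefully is precisely this last local surgery --- confirming that after adjoining $A',B',F$ to the Steiner set and rerouting two edge-germs the combinatorial object remains a tree and that the strict Fermat inequality is genuinely attained --- but both are standard consequences of the strict version of Torricelli's theorem and of the local nature of the perturbation.
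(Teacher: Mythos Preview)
The paper does not actually prove this theorem; it is stated as a known structural fact with references to~\cite{ITBookWP} and~\cite{ITBookRFFI}, and the surrounding text only remarks that the result ``can be just `word-by-word' extended to the case of Riemannian manifolds.'' So there is no in-paper proof to compare against.

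Your argument is correct and is the standard local-variation proof one finds in the cited references. Parts~(1), (2), and~(4) are exactly as one expects: uniqueness of the straight segment as shortest curve, pruning a non-boundary leaf, and the strict triangle inequality for a degree-$2$ Steiner vertex. For part~(3) your construction is the classical one: the small isosceles triangle $vA'B'$ with apex angle $\alpha<120^\c$ has all three angles strictly below $120^\c$, so by the Torricelli result (stated in the paper just above this theorem, with the rotation proof in the footnote) the interior Torricelli point $F$ satisfies $\|Fv\|+\|FA'\|+\|FB'\|<\|vA'\|+\|vB'\|$; your surgery then produces a strictly shorter tree on an enlarged Steiner set, contradicting $\smt_{\R^N}(M)$. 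The bookkeeping that the surgery preserves connectedness and acyclicity is fine as written.
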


\begin{cor}
Let $\G$ be a shortest tree connecting a finite subset $M=\{A_1,\ldots,A_n\}$ in $\R^N$. Then the degree of any its vertex is at most $3$, and if the degree of a vertex $v$ equals to $3$, then the edges meet at $v$ by angles equal to $120^\c$.
\end{cor}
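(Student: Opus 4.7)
The plan is to reduce both parts of the corollary to a single linear-algebra computation applied to the unit tangent vectors at a vertex. Fix a vertex $v$ of $\G$ of degree $k$, and let $u_1,\ldots,u_k\in\R^N$ be the unit vectors pointing from $v$ along the $k$ edges incident to it; these are well-defined since the edges are straight segments by part~(1) of Theorem~\ref{th:LStr}. Part~(3) of the same theorem translates into the inner-product inequalities $u_i\cdot u_j\le\cos 120^\c=-\tfrac12$ for all $i\ne j$, and this is the only input we will need.

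The bound $k\le 3$ will then come from expanding a single squared norm:
$$
0\le\Bigl\|\sum_{i=1}^k u_i\Bigr\|^2=k+2\sum_{i<j}u_i\cdot u_j\le k+2\binom{k}{2}\Bigl(-\tfrac12\Bigr)=k-\frac{k(k-1)}{2}.
$$
This simplifies to $k(k-1)\le 2k$, i.e\. $k\le 3$, which is the first assertion.

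For the equality case $k=3$, every inequality in the display above must be an equality. The condition $u_i\cdot u_j=-\tfrac12$ for all three pairs $i\ne j$ is exactly the statement that the three edges meet at $v$ by angles equal to $120^\c$; as a bonus, the vanishing $u_1+u_2+u_3=0$ also shows that these edges are coplanar, recovering the familiar Torricelli picture locally. The argument is essentially routine once Theorem~\ref{th:LStr} is in hand — the only point worth flagging is that the ambient dimension $N$ plays no role, because we use only the inner-product inequality together with the non-negativity of a squared norm. In that sense the real ``obstacle'' has already been handled inside the Local Structure Theorem itself.
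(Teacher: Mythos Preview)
Your argument is correct. The paper does not actually write out a proof of this corollary; it simply records it as an immediate consequence of the Local Structure Theorem~\ref{th:LStr} and moves on. What you have supplied is precisely the standard justification one would give: the $120^\c$ angle condition of part~(3) becomes $u_i\cdot u_j\le -\tfrac12$, and expanding $\bigl\|\sum_i u_i\bigr\|^2$ forces $k\le 3$ with equality pinning all pairwise angles at $120^\c$. So your proof is consistent with the paper's implicit reasoning, just made explicit; there is nothing to compare approaches against, and no gap to flag. The incidental observation that $u_1+u_2+u_3=0$ (hence coplanarity of the three edges) is a pleasant extra that the paper does not mention here.
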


\begin{examp}
Let $M$  be the vertex set of regular tetrahedron $\D$ in $\R^3$. Then the network consisting of four straight segments joining the vertices of the tetrahedra with its center $O$ is not a shortest network. Indeed, since $\deg O=4$, then the angles between the edges meeting at $O$ are less than $120^\c$. The set $M$ is connected by three different (but isometrical) shortest networks, each of which has two additional vertices of degree $3$, see Figure~\ref{fig:tetrah}.

\Pic{tetr}{Non-shortest tree (left) and one of the shortest trees (right) for the vertex set of a regular tetrahedron.}{fig:tetrah}{196}

\end{examp}

Theorem~\ref{th:LStr} can be just ``word-by-word'' extended to the case of Riemannian manifolds (we only need to change straight segments by geodesic segments)~\cite{ITBookWP} and even to the case of Alexandrov spaces with bounded curvature. The case of normed spaces turned out to be more complicated (some general results can be found in~\cite{ITBookRFFI}).

A connected graph $\G$ in $\R^N$ (in a Riemannian manifold) whose vertex set contains a finite subset $M\subset\R^N$ is called a {\em locally minimal network connecting $M$} or {\em with the boundary $\d\G=M$}, if it satisfies Conditions (1)--(4) from Theorem~\ref{th:LStr}. In the case of complete Riemannian manifolds such graphs are minimal ``in small,'' i.e\. the following result holds, see~\cite{ITBookWP}.

\begin{thm}[Minimality ``in small'']\label{th:small}
Let $\G$ be a locally minimal network connecting a finite subset $M$ of a complete Riemannian manifold $W$. Then each point $P\in \G$ possesses a neighborhood $U$ in $W$, such that the network $\G\cap U$ is a shortest network with the boundary $(\d\G\cap U)\cup(\G\cap\d U)$. \end{thm}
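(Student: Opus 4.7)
The plan is to reduce minimality to a local Euclidean problem via the exponential map, and then to invoke the classical Fermat--Torricelli theory together with uniqueness of short geodesics. Fix $P\in\G$. By Theorem~\ref{th:LStr} and its corollary, the germ of $\G$ at $P$ is one of a short list of standard models: a single geodesic arc through $P$ (if $P$ is interior to an edge, or a degree-$2$ non-boundary vertex, where the $180^\c$ condition glues the two arcs into one geodesic); three geodesic arcs meeting at a Steiner point $P\not\in M$ with pairwise $120^\c$ angles; or a boundary vertex $P\in M$ of degree $1$, $2$, or $3$ with each opening $\ge120^\c$.

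First I would fix $U$ to be a convex normal ball around $P$ of radius $r$ small enough that: (a) any two points of $\overline U$ are joined inside $U$ by a unique minimizing geodesic; (b) $\overline U$ contains no vertex of $\G$ other than $P$ itself; (c) in normal coordinates centered at $P$, the Riemannian and Euclidean lengths of any rectifiable arc in $U$ agree up to a multiplicative factor $1+O(r^2)$. Under these choices $\G\cap U$ consists of $\deg P$ geodesic arcs running from $P$ to points $Q_1,\ldots,Q_{\deg P}\in\d U$, and any competing network $\G'$ with the prescribed boundary $(\d\G\cap U)\cup(\G\cap\d U)$ must connect exactly those points (together with $P$ itself if $P\in M$).

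In the arc cases and the boundary-vertex cases the argument is immediate: $\G\cap U$ is a disjoint union of unique minimizing geodesics between pairs of its boundary points, and $\G'$, being connected with those points on its boundary, must contain a path between each such pair whose length is at least that of the corresponding geodesic arc. The substantial case is the Steiner tripod at $P\not\in M$. Here I would invoke Torricelli's theorem in its quantitative form: for three Euclidean points whose Fermat point realizes all three $120^\c$ angles, that configuration is the unique shortest connecting network, and any other connecting network is strictly longer by a positive margin depending only on the triangle $Q_1Q_2Q_3$. Transferring this via normal coordinates and absorbing the $(1+O(r^2))$ distortion by further shrinking $r$ will yield minimality of $\G\cap U$.

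The main obstacle is making the Steiner-tripod step quantitative and uniform over all admissible competitors $\G'$, which may carry their own extra Steiner points and arbitrarily wiggly subarcs inside $U$. The route I would take is to replace each edge of $\G'\cap U$ by its Euclidean chord in normal coordinates (a step that only decreases length, modulo the same $O(r^2)$ factor), apply the strict Torricelli inequality to the resulting finite piecewise-linear network on vertex set $\{Q_i\}\cup(\text{extra Steiner points of }\G')$, and control the residual error by a compactness argument restricted to competitors of length at most $|\G\cap U|+\varepsilon$. Combining the uniform margin from Torricelli with the curvature-induced distortion then gives $|\G'|\ge|\G\cap U|$ for $r$ chosen small enough, completing the proof.
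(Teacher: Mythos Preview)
The paper does not supply its own proof of this theorem; it merely states the result with a reference to~\cite{ITBookWP}. So there is nothing in the text to compare your argument against, and I can only assess the proposal on its merits.

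The decisive gap is in the Steiner-tripod step. In normal coordinates centred at $P$ the points $Q_1,Q_2,Q_3$ lie on the Euclidean sphere of radius $r$ at mutual $120^\circ$ angles, so the Euclidean Steiner minimal tree for $\{Q_1,Q_2,Q_3\}$ is exactly the radial tripod, of Euclidean length $3r$. But the Riemannian length of $\G\cap U$ is \emph{also} exactly $3r$, since radial geodesics in normal coordinates have coinciding Euclidean and Riemannian length. Hence for a competitor $\G'$ your comparison only yields
\[
|\G'|_g \;\ge\; (1-Cr^2)\,|\G'|_{\mathrm{eucl}} \;\ge\; (1-Cr^2)\cdot 3r \;=\; 3r-3Cr^{3} \;<\; |\G\cap U|_g,
\]
and the curvature distortion eats precisely the quantity you need. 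The ``uniform margin from Torricelli'' you invoke does not exist: the Euclidean excess over $3r$ vanishes as the competitor approaches the tripod, so no compactness argument on short competitors can produce a fixed positive margin. What actually closes the argument is to work directly in the Riemannian metric: the function $f(x)=\sum_i d_g(x,Q_i)$ is strictly convex on the convex ball $U$, its gradient at $P$ vanishes by the $120^\circ$ condition, so $P$ is its unique minimiser; combined with the structure theorem for shortest trees on three points this gives the tripod case without any Euclidean transfer.

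A smaller but related slip: the boundary-vertex cases of degree $2$ and $3$ are not ``immediate'' in the way you claim. For $P\in M$ of degree $2$ with angle $\ge120^\circ$, a competitor is a shortest network on the three points $\{P,Q_1,Q_2\}$, and the two paths in it from $P$ to $Q_1$ and from $P$ to $Q_2$ typically share edges, so summing geodesic lower bounds overcounts. Showing that the optimum has its Steiner point at $P$ is again a Fermat--Torricelli statement (the obtuse case), not a consequence of uniqueness of pairwise geodesics.
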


\begin{rk}
In the case of normed spaces Theorem~\ref{th:small} is not valid even for two-point sets, see example in Figure~\ref{fig:manh}.
\end{rk}

\subsection{Melzak Algorithm and Steiner Problem Complexity}
Let us return back to the case of Euclidean plane. It turns out that in this case the Torricelli--Simpson construction can be generalized to a geometrical algorithm, that either constructs a locally minimal tree of a given structure for a given boundary set, or reports that such a tree does not exist. This algorithm was discovered by Z.~Melzak~\cite{Melz} and improved by F.~Hwang~\cite{Hw}.

Assume that we are given with a tree $G$ whose vertex degrees are at most $3$, a finite subset $M$ of the plane, and a bijection $\v\:\d G\to M$, where $\d G$ is the set of all vertices from $G$ of degrees $1$ and $2$. To start with, partition the tree $G$ into the union of so-called {\em non-degenerate\/} components $G_i$ by cutting the tree at each its vertex of degree $2$, see Figure~\ref{fig:compon}. To construct locally minimal network $\G$ of type $G$ spanning $M$ in accordance with $\v$ it suffices to construct each its component $\G_i$ of type $G_i$ on the corresponding boundary $M_i=\v(\d G_i)$, where $\d G_i=\d G\cap G_i$, in accordance with $\v_i=\v|_{\d G_i}$ and to verify the angles between the edges of the components at the vertices of degree $2$. All these angles must be more than or equal to $120^\c$, see~Figure~\ref{fig:compon}.

\Pic{compon}{Tree $G$ is partitioned into $4$ non-degenerate components by cutting at vertices of degree $2$.}{fig:compon}{144}

Now we pass to the case of one non-degenerate component, i.e\. we assume that $G$ has no vertices of degree $2$ and that $\d G$ consists of all the vertices of degree $1$. Such trees are referred as {\em binary}. If $|\d G|=2$, then the corresponding locally minimal tree $\G$ is a straight segment. Otherwise, it is easy to verify that each such tree $G$ contains so-called {\em moustaches}, i.e\. a pair of vertices of degree $1$ neighboring with a common vertex of degree $3$. Fix such moustaches $\{x, x'\}\subset\d G$, by $y$ denote their common vertex of degree $3$, and make {\em a forward step of Melzak algorithm}, see Figure~\ref{fig:Mel1}, that reduces the number of boundary vertices by $1$. Namely, we reconstruct the tree $G$ by deleting the vertices $x$ and $x'$ together with the edges $xy$ and $x'y$ and adding $y$ to the boundary of new binary tree; reconstruct the set $M$ by deleting the points $\v(x)$ and $\v(x')$ and adding a new point $A_{xx'}$ which is the third vertex of a regular triangle constructed on the straight segment $\v(x)\v(x')$ in the plane; and reconstruct the mapping $\v$ putting $\v(y)=A_{xx'}$. Notice that the point $A_{xx'}$ can be constructed in two ways, because there are two such regular triangles. Thus, if the number of boundary vertices in the resulting tree is more than $2$, then we can repeat the procedure described above. And if it becomes $2$, then we can construct the corresponding locally minimal tree --- the straight segment. Here the forward trace of Melzak algorithm stops. Now we have to reconstruct the initial tree, if possible.

\Pic{Mel1}{A step of forward trace of Melzak algorithm.}{fig:Mel1}{144}

Thus, we have a straight segment $I\subset\R^2$ realizing locally minimal tree with unique edge $ab$, and at least one of its ending points has the form $A_{xx'}$, where $x$ and $x'$ are the boundary vertices of the binary tree $G$ from the previous step, neighboring with their common vertex of degree $3$. Let this common vertex be $a$, that is $a$ corresponds to $A_{xx'}$. We reconstruct $G$ by adding edges $ax$ and $ax'$. Then we restore the points $\v(x)$ and $\v(x')$ in the plane together with the regular triangle $\v(x)\v(x')A_{xx'}$, circumscribe the circle $S^1$ around it and consider the intersection of $S^1$ with the segment $I$, see Figure~\ref{fig:mel2}. If it does not contains a point lying at the smaller arc of $S^1$ restricted by $\v(x)$ and $\v(x')$, then the tree $G$ can not be reconstructed and we have to pass to another realization of the forward trace of the algorithm. Otherwise we put $\v(a)$ be equal to this point. The straight segments $\v(x)\v(a)$ and $\v(x')\v(a)$ meet at $\v(a)$ by $120^\c$ and together with the subsegment $\v(a)\v(b)$ form  a locally minimal binary tree $\G$ of type $G$ with tree boundary vertices. We repeat this procedure until we either reconstruct the tree of type $G$, or verify all possible realizations of the forward trace and conclude that the tree of type $G$ does not exists.

\Pic{Mel2}{A step of back trace of Melzak algorithm.}{fig:mel2}{144}

The Melzak algorithm described above contains an exponential number of possibilities of its forward trace realization, due to two possible locations of each regular triangle constructed by the algorithm. This complexity can be reduced by means of modification suggested by F.~Hwang~\cite{Hw}. He showed that considering a bit more complicated configurations of boundary points (four points corresponding to ``neighboring moustaches'' or three points corresponding to moustaches and ``neighboring'' degree-$1$ vertex) one always can understand which regular triangle must be chosen, see details in~\cite{Hw}.

But unfortunately even a linear time realization of Melzak algorithm does not lead to a polynomial algorithm of a shortest tree finding. The reason is a huge number of possible structures of the tree $G$ with $|\d G|=n$ together with also exponential number of different mappings $\v\:\d G\to M$ for fixed $\d G$ and $M$. Even for binary trees we have $3$ possibilities for $n=4$, see Figure~\ref{fig:manytop}, and $15$ possibilities for $n=5$ (notice that the corresponding binary trees are isomorphic as graphs). For $n=6$ we have two non-isomorphic binary trees and the number of possibilities becomes $90$. It can be shown that the total number of possibilities can be estimated by Catalan number and grough exponentially.

\Pic{manytop}{Three binary trees with $4$ vertices of degree $1$.}{fig:manytop}{144}

So, to obtain an efficient algorithms, we have to find some {\em  a priori\/} restrictions on possible structures of minimal networks. In the next subsection we tell about the restrictions generated by geometry of boundary sets.

\subsection{Boundaries Geometry and Networks Topology}
Here we review our results from~\cite{ITUMN90} and~\cite{ITPlane}. The goal is to find some restriction on the structure of locally minimal binary trees spanning a given boundary in the plane in terms of geometry of the boundary set. To do this we need to choose or to introduce some characteristics of the network structure and of the boundary geometry.

\Pic{convex}{A boundary set with $4$ convexity levels.}{fig:convex}{144}

As a characteristic of the geometry of a boundary set $M$ we take the {\em number of convexity levels $c(M)$}. Recall the definition. Let $M$ be a finite non-empty subset of the plane. Take the convex hull $\ch M$ of $M$ and assign the points from $M$ lying at the boundary of the polygon $\ch M$ to the {\em first convexity level $M^{(1)}$ of $M$}. If the set $M\setminus M^{(1)}$ is not empty, then define the {\em second convexity level $M^{(2)}$ of $M$} to be equal to the first convexity level of $M\setminus M^{(1)}$, and so on. As a result, we obtain the partition of the set $M$ into its convexity levels, and by $c(M)$ we denote the total number of this levels, see Figure~\ref{fig:convex}.

Now let us pass to definition of a characteristic describing the ``complexity'' of planar binary trees. Assume that we are given with a planar binary tree $\G$, and let the orientation of the plane be fixed. For any its two edges, say $e_1$ and $e_2$, we consider the unique path $\g$ in $\G$ starting at $e_1$ and finishing at $e_2$. All interior vertices of $\g$ are the vertices of $\G$ having degree $3$. Let us walk from $e_1$ to $e_2$ along $\g$. Then at each interior vertex of $\g$ we make either left, or right turn in $\G$. Define the value $\tw(e_1,e_2)$ to be equal to the difference between the numbers of left and right turns we have made. In other words, assign to an interior vertex of $\g$ the label $\tau=\pm1$, where $+1$ corresponds to left turns and $-1$ to right turns. Then $\tw(e_1,e_2)$ is the sum of these values, see Figure~\ref{fig:tw}. Notice that $\tw(e_1,e_2)=-\tw(e_2,e_1)$. At last, we put  $\tw\G=\max\tw(e_i,e_j)$, where the maximum is taken over all ordered pairs of edges of $\G$.

If the tree $\G$ is locally minimal, then the twisting number between any pair of its edges has a simple geometrical interpretation, see Figure~\ref{fig:tw}. Namely, since the angles between any  neighboring edges are equal to $2\pi/3$, then $\tw(e_i,e_j)$ is equal to the total angle which the oriented edge rotates by passing from $e_i$ to $e_j$, divided by $\pi/3$.

It turns out, that the twisting number of a locally minimal binary tree with a given boundary is restricted from above by a linear function on the number of convexity levels of the boundary. Namely, the following result holds.

\begin{thm}\label{th:tw_gen}
Let $\G$ be a locally minimal binary tree connecting the boundary set $M$ that coincides with the set of vertices of degree $1$ from $\G$. Then
 $$
\tw\G\le 12\bigl(c(M)-1\bigr)+5.
 $$
\end{thm}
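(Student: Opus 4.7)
The plan is to pick a pair of edges $(e_1,e_2)$ of $\G$ realizing the maximum, and study the path $\gamma$ between them. By the Local Structure Theorem each interior vertex of $\gamma$ has degree $3$ with $120^\circ$ angles, so consecutive edges of $\gamma$ differ in direction by exactly $\pm\pi/3$, and the oriented tangent accumulates a total rotation of $\tw\G\cdot\pi/3$ along $\gamma$. At every interior vertex of $\gamma$ there is also a third edge --- a \emph{rib} --- which roots a binary subtree of $\G$ whose degree-$1$ vertices lie in $M$, so each interior vertex of $\gamma$ carries at least one associated boundary point.

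First I would treat the base case $c(M)=1$, i.e.\ the case of a convex boundary. Here I would argue that accumulating a twist of $6$ along $\gamma$ --- a full $2\pi$ rotation of the tangent --- would force the path together with its ribs to enclose a bounded planar region and hence to contain a boundary point in its interior, contradicting the convex position of $M$. A finer analysis, keeping track of the direction of the first edge and the embeddedness of $\G$, gives the sharper bound $\tw\G\le 5$ in the convex case; this is what accounts for the additive constant in the theorem.

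For the general case I would induct on $c(M)$. The key geometric step is to show that any subpath $\gamma'\subset\gamma$ on which the twist grows by $12$ (two full windings of the tangent) forces one rib-subtree of $\gamma'$ to reach a boundary point of $M$ that lies strictly inside the convex hull of the rib-endpoints associated to the rest of $\gamma$. Peeling off the corresponding nested subtree drops $\tw\G$ by at most $12$ while dropping $c(M)$ by exactly $1$; the inductive hypothesis then gives the claimed inequality $\tw\G\le 12(c(M)-1)+5$.

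The main obstacle is this geometric step, linking local windings of $\gamma$ to deeper convexity levels of $M$. The difficulty is that ribs can reach very far from $\gamma$, so one cannot reason purely locally; the argument must exploit both the $120^\circ$ condition at trivalent vertices and the planar embeddedness of $\G$ to rule out the possibility that every rib of a spiraling subpath escapes outward past the convex hull of the other ribs. This is where the bulk of the planar-geometric work lies, and it is likely carried out by a careful case analysis using the six $\pi/3$-sectors into which the rib directions at consecutive vertices of $\gamma$ fall.
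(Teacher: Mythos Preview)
The paper does not actually prove this theorem; it is quoted from \cite{ITUMN90} and \cite{ITPlane}, with only a hint for the convex case $c(M)=1$: the bound $\tw\G\le5$ ``follows from the geometrical interpretation of the twisting number, easy remark that $\tw\G$ always attains at boundary edges, and the monotony of convex polygonal lines.'' Your convex-case argument diverges from this and is not justified as stated: $\gamma$ is a simple polygonal arc in a tree, and a simple arc whose tangent rotates by $2\pi$ need not enclose any region or trap any point, so ``twist $6$ forces the path together with its ribs to enclose a region with a boundary point inside'' requires a genuine argument you have not supplied. The hinted mechanism is different --- once the maximum is known to be attained at two \emph{boundary} edges, the directions of those edges are constrained by the convex position of their endpoints in $M$, and comparing the turning $\tw\cdot\pi/3$ along $\gamma$ with the exterior-angle turning along the convex boundary gives $\tw\le5$ directly, with no enclosure argument.

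Your inductive step for general $c(M)$ has a structural problem beyond the obstacle you already flag. Removing a rib-subtree rooted at an interior vertex of $\gamma$ deletes no vertex of $\gamma$, so $\tw(e_1,e_2)$ is unchanged; and deleting the boundary points carried by that subtree need not drop $c(M)$ by exactly one (a single interior-level point may share its level with others, or the subtree may carry points from several levels). Thus ``peeling off the corresponding nested subtree drops $\tw\G$ by at most $12$ while dropping $c(M)$ by exactly $1$'' is not a valid inductive move as written. Your proposal identifies the right ingredients --- the $\pi/3$-quantization, ribs carrying boundary points, convexity levels --- but does not yet contain a working reduction linking a gain of $12$ in twist to a gain of $1$ in $c(M)$.
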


\Pic{tw}{Left and right turns in planar binary tree and its twisting number (left), and twisting number of locally minimal binary tree.}{fig:tw}{169}

The important particular case $c(M)=1$ corresponds to the vertex sets of convex polygons. Such boundaries are referred as {\em convex}.

\begin{thm}\label{th:tw5}
Let $\G$ be a locally minimal binary tree with a convex boundary. Then $\tw\G\le 5$. Conversely, any planar binary tree $\G$ with $\tw\G\le5$ is planar equivalent to a locally minimal binary tree with a convex boundary.
\end{thm}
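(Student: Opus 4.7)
The first assertion is an immediate specialization of Theorem~\ref{th:tw_gen}: a convex boundary has $c(M)=1$, whence $\tw\G\le 12(1-1)+5=5$, so this half requires no further work.

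The substantive part is the converse. My plan is to argue by induction on $n=|\d\G|$, exploiting the basic rigidity of locally minimal binary trees in $\R^2$: all edges are parallel to one of six fixed hexagonal directions (multiples of $\pi/3$), and the three edges at any degree-$3$ vertex use three of these differing pairwise by $2\pi/3$. Thus specifying a realization amounts to an assignment of a hexagonal direction and a positive length to each edge, subject to the $120^\circ$ constraint at every internal vertex. The base cases $n\le3$ are immediate: a segment and a Y-configuration automatically sit on a convex polygon.

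For the inductive step I would choose a moustache $\{x,x'\}$ of $\G$ with common neighbor $y$ and form the reduced planar binary tree $\G'$ by deleting $x,x'$ (so $y$ becomes a new leaf). One first verifies $\tw\G'\le\tw\G\le 5$, which follows because every path in $\G'$ is a sub-path of a path in $\G$ and so contributes exactly the same left/right count. By the inductive hypothesis, $\G'$ admits a locally minimal realization with convex boundary. One then attaches the moustache back at $y$: the direction of the edge incident to $y$ in $\G'$ is already fixed, which (up to a binary choice analogous to the one in Melzak's algorithm) determines the two remaining hexagonal directions for $yx$ and $yx'$; lengths are then chosen so that $x$ and $x'$ land on the convex hull of the new boundary.

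The main obstacle is this final placement step: one must show suitable lengths exist. Here the hypothesis $\tw\G\le5$ is used essentially. Since each degree-$3$ turn contributes $\pm\pi/3$ and the contour of a convex $n$-gon rotates by a total of $2\pi$, the bound forces the sequence of edge-directions encountered around $\d\G$ to sweep exactly one full turn; a tree with $\tw\G\ge6$ would be compelled to wind more than once, incompatible with any convex embedding. I expect the cleanest formalization is via the triangular-tiling description alluded to earlier: a planar binary tree with $\tw\G\le 5$ should correspond combinatorially to a simply-connected subcomplex of the standard triangular tiling whose degree-$1$ nodes lie on the vertices of a convex polygon, and the required geometric realization is read off directly from this subcomplex. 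Reconciling the inductive step with the tiling picture, and handling configurations in which the chosen moustache is adjacent to another moustache (the degenerate cases that Hwang's refinement is designed for), is where most of the work will concentrate.
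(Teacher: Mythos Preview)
Your derivation of the direct statement from Theorem~\ref{th:tw_gen} is fine (the paper in fact gives an even more elementary argument, via the geometric interpretation of $\tw$ and monotonicity of convex polygonal boundaries, but yours is logically valid).

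The converse argument has a real gap, and it lies exactly where you flag it. The inductive hypothesis hands you \emph{some} convex realization of $\G'$, but you have no control over which one. At the leaf $y$, the incoming edge direction $\theta$ and the position of $y$ on $\partial(\ch M')$ are already fixed; the two new edges $yx$, $yx'$ are then forced into the directions $\theta\pm 2\pi/3$, and the only freedom left is their lengths. There is no reason the wedge of the convex hull at $y$ is wide enough, or oriented correctly, for \emph{any} positive choice of these two lengths to keep $\bigl(\partial\G'\setminus\{y\}\bigr)\cup\{x,x'\}$ in convex position; in general one would have to go back and perturb lengths throughout $\G'$, which destroys the induction as stated. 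A workable induction would need a much stronger hypothesis describing the full family of convex realizations of $\G'$ (or at least one with a prescribed ``slack'' at $y$), and you have not formulated that. The reference to Hwang's refinement is also off target: that device selects among the two Torricelli triangles when the boundary points are \emph{given}, whereas here you are trying to manufacture the boundary.

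The paper's proof takes a different, non-inductive route. It first shows (Theorem~\ref{th:tiling}) that the triangular tiling dual to any planar binary tree with $\tw\le 5$ is embedded without self-intersection; it then decomposes each such tiling into a \emph{skeleton} (no growths) plus growths, and proves that the skeletons are governed by a \emph{finite} list of codes (at most $4$ branching points, at most $9$ linear parts; see Figure~\ref{fig:allcod}). The convex realization is then produced explicitly for each of these finitely many families. So the argument is a global structural classification rather than a leaf-by-leaf induction. Your closing intuition about tilings points in the right direction, but note that the tiling polygon itself is \emph{not} required to be convex; the convex-boundary realization is built from the tiling by a separate construction, and that construction is where the finite classification is used.
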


Notice that the direct statement of Theorem~\ref{th:tw5} is rather easy to prove (it follows from the geometrical interpretation of the twisting number, easy remark that $\tw\G$ always attains at boundary edges, and the monotony of convex polygonal lines). But the converse statement is quite non-trivial. The proof obtained in~\cite{ITPlane} is based on the complete description of binary trees with twisting number at most five, obtained in terms of so-called  triangular tilings that will be discussed in the next subsection.

\begin{prb}
Estimate the number of binary trees structures with $n$ vertices of degree $1$ and twisting number at most $k$. It is more or less clear that the number is exponential on $n$ even for $k=5$, but it is interesting to obtain an exact asymptotic.
\end{prb}

\subsection{Triangular Tilings and their Applications}
It turns out that the description of planar binary trees with twisting number at most five can be effectively done in the language of planar triangulations of a special type which are referred as triangular tilings.

The correspondence between diagonal triangulations of planar convex polygons and planar binary trees is well-known: the planar dual graph of such triangulation is a binary tree, see Figure~\ref{fig:dualtr}, and each binary tree can be obtained in such a way. Here the vertices of the dual graph are centers of the triangles of the triangulation (medians intersection point) and middle points of the sides of the polygon; and edges are straight segments joining either the middle of a side with the center of the same triangle, or two centers of the triangles having a common side.

\Pic{dualtr}{A diagonal triangulation and corresponding planar binary tree.}{fig:dualtr}{144}

In the context of locally minimal binary trees, the most effective way to represent the diagonal triangulations is to draw them consisting of regular triangles. Such special triangulations are referred as {\em triangular tilings}. The main advantage of the tilings is that the dual binary tree constructed as described above is a locally minimal binary tree with the corresponding boundary. Therefore, tilings ``feel the geometry'' of locally minimal binary trees and turns out to be very useful in the description of such trees with small twisting numbers.

The main difficulty in constructing a triangulation consisting of regular triangles for a given binary tree is that the resulting polygon can overlap itself. An  example of such overlapping can be easily constructed from a binary tree $\G$ corresponding to the diagonal triangulation of a convex $n$-gon, $n\ge6$, all whose diagonals are incident to a common vertex. But the twisting number of such $\G$ is also at least $6$. The following result is proved in~\cite{ITPlane}.

 \begin{thm}\label{th:tiling}
The triangular tiling corresponding to any planar binary tree with twisting number less than or equal to five has no self-intersections.
 \end{thm}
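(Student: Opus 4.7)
The plan is to argue by contrapositive: I will show that if the triangular tiling associated to a planar binary tree $\G$ has a self-intersection, then $\tw\G\ge 6$. The starting point is the description of the tiling as a subcomplex of the standard triangulation of the plane by regular triangles (the hexagonal lattice picture). In this picture every degree-$3$ vertex of $\G$ sits at the center of a triangle, and each edge of $\G$ crosses perpendicularly the side shared by two adjacent triangles, so walking along a path in $\G$ corresponds to passing from triangle to adjacent triangle and turning left/right at each interior vertex exactly matches rotating the ``current edge direction'' by $+\pi/3$ or $-\pi/3$ respectively.

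First I would develop the correspondence quantitatively. Fix a root edge $e_0$; then, for every edge $e$ of $\G$, the oriented direction of its realization in the tiling is obtained from the direction of $e_0$ by the rotation through the angle $\tw(e_0,e)\cdot \pi/3$. In particular the absolute value $|\tw(e_0,e)|$ equals the number of lattice directions (modulo $6$) separating $e$ from $e_0$, and the hypothesis $\tw\G\le 5$ gives $|\tw(e_0,e)|\le 5$ for every $e$, so the accumulated rotation along any path in $\G$ is strictly less than $2\pi$. This is the key rigidity statement: no path in a tree with $\tw\G\le 5$ can execute a full turn in the tiling.

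Next I would analyse how a self-intersection of the tiling can occur. Since both triangles involved are regular and of the same size and sit on the hexagonal lattice (up to the global placement of $e_0$), two overlapping triangles $t_1$ and $t_2$ correspond to a path $\g$ in $\G$ whose accumulated isometry between the tiles is either a translation or a rotation. I would then consider the closed polygonal loop in the plane obtained by concatenating $\g$ with a short arc joining the centers of $t_1$ and $t_2$ through their overlap region. An overlap of two distinct triangles of the tiling forces this loop to enclose at least one lattice vertex; by the turning-tangent theorem for piecewise linear curves the total signed turning along the loop is a nonzero multiple of $2\pi$. Since the short closing arc contributes a bounded and controllable amount, the turning along $\g$ itself must be at least $2\pi$ in absolute value, forcing $|\tw(e_1,e_2)|\ge 6$ for the initial and terminal edges $e_1,e_2$ of $\g$, contradicting $\tw\G\le 5$.

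The main obstacle is the last paragraph, in particular the reduction from an arbitrary overlap of two triangles to the ``enclosed lattice vertex'' configuration. Partial overlaps and overlaps that merely touch along boundaries must be excluded or shown to already imply a more serious (total) overlap somewhere else in the tiling; this is exactly the point where one needs the classification of binary trees with $\tw\G\le 5$ in terms of their tile shape. I would handle this by an innermost-witness argument: among all pairs of triangles of the tiling whose images meet, pick one where the path $\g$ joining them in $\G$ is shortest. Then no subpath can witness a shorter overlap, which forces $\g$ to bound a topological disk in the tiling, and inside that disk the turning analysis above applies rigorously, yielding the required lower bound $\tw\G\ge 6$.
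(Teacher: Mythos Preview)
The paper does not actually prove this theorem; it only states it and refers to~\cite{ITPlane} for the proof. So there is no in-paper argument to compare against, and I can only evaluate your plan on its own merits.

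Your overall strategy---contrapositive, realize the dual tree inside the hexagonal lattice, pick an innermost pair of coinciding cells, and apply the turning--tangent theorem to the resulting simple closed hexagonal-lattice loop---is sound and can be made to work. Two remarks, one simplifying and one pointing to a genuine gap.

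\emph{Simplification.} Your worry about ``partial overlaps'' is unnecessary. Once one cell is placed, every cell of the tiling is a triangle of the standard triangular lattice, and two such triangles have either disjoint interiors or coincide. So an interior self-intersection is exactly a coincidence of two cells, and the innermost choice makes all intermediate centers distinct; since the dual hexagonal lattice is planar with non-crossing edges, the resulting closed centre-path is automatically a simple closed curve. No extra ``bounding a topological disk'' argument is needed.

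\emph{The gap.} Your key quantitative claim is off by one. For the innermost simple loop the total turning is exactly $\pm 2\pi$, and the ``closing turn'' at the repeated centre $C_0$ is exactly $\pm\pi/3$ (any two of the three hexagonal-lattice edges at a vertex meet at $120^\circ$). Hence the turning accumulated along $\gamma$ alone is $\pm 2\pi \mp \pi/3$, which in units of $\pi/3$ gives
\[
\tw(f_1,f_k)\in\{5,\,7,\,-5,\,-7\},
\]
and the value $5$ genuinely occurs (e.g.\ for the fan triangulation with seven cells, where $T_1$ and $T_7$ coincide). So your sentence ``the turning along $\gamma$ itself must be at least $2\pi$, forcing $|\tw(e_1,e_2)|\ge 6$'' is false as stated. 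The fix is short but essential: the endpoints $u_0,u_k$ of $\gamma$ are degree-$3$ vertices of $\Gamma$, so each carries two further edges, one giving a $+1$ turn and the other a $-1$ turn relative to $\gamma$. Extending $\gamma$ by the appropriate edge at one endpoint raises $|\tw|$ from $5$ to $6$, and then $\tw\Gamma\ge 6$ follows. Without this extension step your argument only yields $\tw\Gamma\ge 5$, which does not contradict the hypothesis.
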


Theorem~\ref{th:tiling} gives an opportunity to reduce the description of the planar binary trees with twisting number at most five to the description of the corresponding triangular tilings.

\Pic{inout}{A triangular tiling with its dual binary tree (left), its outer cells (middle) and inner cells (right).}{fig:inout}{144}

To describe all the triangular tilings whose dual binary trees have the twisting number at most five, we decompose each such tiling into elementary ``breaks''. The triangles of the tiling are referred as {\em cells}. A cell of a tiling $T$ is said to be {\em outer}, if two its sides lie at the boundary of $T$ considered as planar polygon. Further, a cell is said to be {\em inner}, if no one of its sides lies at the boundary, see Figure~\ref{fig:inout}. An outer cell adjacent to (i.e\. intersecting with by a common side) an inner cell is referred as a {\em growth of $T$}. A tiling can contains as un-paired growths, so as paired growths, see Figure~\ref{fig:scelet}.

\Pic{scelet}{Un-paired and paired growths of a tiling (left), growths that should be deleted to get a skeleton (middle), corresponding decomposition into skeleton and growths (right).}{fig:scelet}{144}

For each inner cell we delete exactly one growth adjacent to it, providing such growths exist. As a result, we obtain a decomposition of the initial tiling into its growths and its {\em skeleton\/} (a tiling without growths). Notice, that such a decomposition is not unique.

It turns out that the skeletons of the tilings whose dual binary trees have twisting number at most five can be described easily. Also, the possible location of growthes in such tilings on their skeletons also can be described. The details can be found in~\cite{ITPlane} or~\cite{ITBookWP}. Here we only formulate the skeletons describing Theorem and include several examples of its application.

Inner cells of a skeleton $S$ are organized into so-called {\em branching points}, see Figure~\ref{fig:code}. After the branching points deleting, the skeleton is partitioned into {\em linear parts}. Each linear part contains at most one outer cell. Construct a graph $C(S)$  referred as the {\em code of the skeleton $S$} as follows: the vertex set of $C(S)$ is the set of its branching points and of the outer cells of its linear parts. The edges correspond to the linear parts, see Figure~\ref{fig:code}.

\Pic{code}{Branching points (left), linear parts (middle), and code of a skeleton (right).}{fig:code}{144}

The following result is proved in~\cite{ITPlane}.

 \begin{thm}
Consider all skeletons whose dual graphs twisting numbers are at most $5$ and for each of  these skeletons construct its code. Then, up to planar equivalence, we obtain all planar graphs with at most $6$ vertices of degree $1$ and without vertices of degree $2$. In particular, every such skeleton contains at most $4$ branching points and at most $9$ linear parts.
 \end{thm}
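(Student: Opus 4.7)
The plan is to prove both directions of the claimed bijection: first, that every code of a skeleton with twisting number at most $5$ is a planar tree with all non-leaf vertices of degree $\ge 3$ and at most $6$ leaves; and conversely, that every such planar graph is realized by some skeleton. The quantitative bounds on branching points and linear parts will then fall out of elementary graph theory.

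First, I would verify the easy structural facts about $C(S)$. Because the tiling $S$ triangulates a simply-connected polygon, its planar dual is a tree, and $C(S)$ (obtained by contracting the interior of each linear part to a single edge) inherits both planarity and the tree property. By the definition of a branching point (a cluster of inner cells where at least three linear parts meet), each branching point has degree $\ge 3$ in $C(S)$, while each outer cell of a linear part is a leaf; no degree-$2$ vertex can arise, since a degree-$2$ vertex would either merge two linear parts into one (contradicting maximality) or put an outer cell in a non-terminal position (forbidden by definition).

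Next, I would carry out the forward direction: showing $\tw \G \le 5$ forces at most $6$ leaves in $C(S)$. The key is the geometric interpretation already stated in the paper — for a locally minimal binary tree, $\tw(e_i,e_j)$ equals the signed total angular rotation of the oriented edge divided by $\pi/3$ as one traverses the dual path from $e_i$ to $e_j$. Along a linear part the dual path is a nearly straight strip, so its contribution to the twist is a small bounded constant; each branching point contributes a bounded rotation; each leaf (outer cell in a linear part) forces the dual path to make a sharp turn of roughly $\pm 2$ units of $\pi/3$ when it enters and exits through the single adjacent linear-part neighbor. By picking two leaves whose code-path accumulates rotations of the same sign (one can always do this in a tree with $\ge 7$ leaves, by a pigeonhole/orientation argument on the cyclic order of leaves in the plane), one produces two edges of $\G$ whose mutual twist exceeds $5$. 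This is the main obstacle of the proof, and the precise bookkeeping of rotations along a path is where the geometry of triangular tilings really has to be used.

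For the converse, given a planar tree $H$ with $\le 6$ leaves and all internal vertices of degree $\ge 3$, I would realize it explicitly as a code: replace each leaf by an outer cell, each internal vertex by a triangular-tiling junction of the required degree, and each edge by a straight strip of regular triangles of any admissible length. Theorem~\ref{th:tiling} guarantees that the resulting tiling is non-self-overlapping precisely because the dual's twisting number stays $\le 5$ — which one checks directly using the small number of leaves and the straightness of the linear parts. Finally, the quantitative bounds follow immediately: for a tree with $\ell$ leaves and $b$ internal vertices of degree $\ge 3$, the handshake lemma gives $\ell + 3b \le 2(\ell+b-1)$, so $b \le \ell - 2 \le 4$, and the number of edges is $\ell + b - 1 \le 9$.
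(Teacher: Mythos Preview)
The paper does not actually prove this theorem: the sentence immediately preceding it reads ``The following result is proved in~\cite{ITPlane}'', and after the statement the text just points to the figure of all possible codes and moves on. So there is no in-paper argument against which to compare yours; the proof lives in the cited 1991 \emph{Matem.\ Sbornik} paper.

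Judged on its own terms, your sketch has genuine gaps in both directions.

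\emph{Forward direction.} The sentence ``each leaf (outer cell in a linear part) forces the dual path to make a sharp turn of roughly $\pm2$ units of $\pi/3$ when it enters and exits through the single adjacent linear-part neighbor'' does not make sense as written. A leaf of $C(S)$ is a terminal vertex; the unique path in $\Gamma$ between two chosen edges $e_1,e_2$ never ``enters and exits'' through a leaf it does not end at, and leaves of $C(S)$ that lie off that path contribute nothing whatsoever to $\tw(e_1,e_2)$. What governs $\tw(e_1,e_2)$ is the structure \emph{along} the path --- the branching points it traverses and the geometry of the linear parts between them --- not the total number of tips hanging elsewhere on the skeleton. Your pigeonhole remark does not bridge this: having seven leaves in cyclic order tells you nothing directly about the twist along a single internal path. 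The actual argument in \cite{ITPlane} requires a careful accounting of how outer cells, linear parts, and branching points constrain the possible turn sequences, and it is not a one-line orientation/pigeonhole step.

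\emph{Converse direction.} After assembling junctions and strips you say that $\tw\le5$ is something ``one checks directly using the small number of leaves and the straightness of the linear parts.'' But that check \emph{is} the entire content of the converse; nothing you have written explains why, for instance, a realization with six leaves and four branching points cannot produce a pair of edges with twist $6$. Invoking Theorem~\ref{th:tiling} does not help either: that theorem takes $\tw\le5$ as hypothesis and yields embeddedness, whereas here you need to \emph{establish} $\tw\le5$ for your constructed tiling.

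Your final paragraph --- the handshake-lemma count giving $b\le\ell-2\le4$ branching points and at most $\ell+b-1\le9$ linear parts --- is correct and is exactly how the ``in particular'' clause follows once the main bijection is known.
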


All possible codes of such skeletons are depicted in Figure~\ref{fig:allcod}.

\Pic{allcod}{All possible codes of skeletons whose dual binary trees have twisting number at most $5$.}{fig:allcod}{144}

This description of skeletons and corresponding tilings obtained in~\cite{ITPlane}, was applied to the proof of inverse (non-trivial) statement of Theorem~\ref{th:tw5}. In some sense, the proof obtained in~\cite{ITPlane} is constructive: for each tiling under consideration a corresponding locally minimal binary tree with a convex boundary is constructed.

Another application is a description of all possible binary trees of the skeleton type that can be realized as locally minimal binary trees connecting the vertex set of a regular polygon. It turns out, see details in~\cite{ITBookWP}, that there are $2$ infinite families of such trees and $1$ finite family. The representatives of these networks together with the  corresponding skeletons are shown in Figure~\ref{fig:ngon}.

\Pic{ngon}{Two infinite families of locally minimal binary trees connecting vertex sets of regular polygons, existing for any regular $n$-gon (left) and for $3k+6$-gons (middle), and the unique finite family, existing for $24$-, $30$-, $36$- and $42$-gons only (right).}{fig:ngon}{144}

\section{Steiner Ratio}\label{sec:sr}
\markright{\thesection.~Steiner Ratio.}
As we have already discussed in the previous Section, the problem of finding a shortest tree connecting a given boundary set is exponential even in two-dimensional Euclidean plane. On the other hand, in practice it is necessary to solve transportation problems of this kind for several thousands boundary points many times a day. Therefore, in practice some heuristical algorithms are used. One of the most popular heuristics for a shortest tree is corresponding minimal spanning tree. But using such approximate solutions instead of exact one it is important to know the value of possible error appearing under the approximation. The {\em Steiner ratio of a metric space\/} is just the measure of maximal possible relative error for the approximation of a shortest tree  by the corresponding minimal spanning tree.

\subsection{Steiner Ratio of a Metric Space}
Let $M$ be a finite subset of a metric space $(X, \rho)$, and assume that $|M|\ge2$. We put $\sr M=\smt(M)/\mst(M)$. Evidently, $\sr M\le 1$. The next statement is also easy to prove.

 \begin{ass}
For any metric space $(X,\r)$ and any its finite subset $M\subset X$, $|M|\ge2$, the inequality $\sr M>1/2$ is valid.
 \end{ass}

\begin{proof}
Let $G$ be a Steiner tree connecting $M$. Consider an arbitrary embedding of the graph $G$ into the plane, walk around $G$ in the plane and list consecutive paths forming this tour and joining consecutive boundary vertices from $M$. The length of each such path $\g_{PQ}$ joining boundary vertices $PQ$, i.e\. the sum of the lengthes of its edges, is more than or equal to the distance $\r(P,Q)$, due to the triangle inequality. Consider the cyclic path in the complete graph with vertex set $M$ consisting of edges formed by the pairs of consecutive vertices from the tour, and let $T$ be a spanning tree on $M$ contained in this path. It is clear, that $\r(T)<\sum_{(P,Q)}\r(\g_{PQ})$, where the summation is taken over all the pairs of consecutive vertices of the tour. On the other hand, each edge of the tree $G$ belongs to exactly two such paths, hence $\sum_{(P,Q)}\r(\g_{PQ})=2\r(G)$. So, we have $\sr(M)\ge\r(G)/\r(T)>1/2$. The Assertion is proved.
\end{proof}

The value $\sr(M)$ is the relative error appearing under approximation of the length of a shortest tree for a given set $M$ by the length of a minimal spanning tree. The {\em Steiner ratio of a metric space $(X,\rho)$} is defined as the value $\sr(X)=\inf_{M\subset X}\sr(M)$, where the infimum is taken over all finite subsets $M$, $|M|\ge2$ of the metric space $X$. So, the Steiner ratio of $X$ is the value of the relative error in the worse possible case.

\begin{cor}
For arbitrary metric space $(X,\rho)$ the inequality $1/2\le\sr(X)\le1$ is valid.
\end{cor}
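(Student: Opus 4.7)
The plan is to observe that both inequalities are essentially immediate consequences of the preceding Assertion and the definition of $\sr(X)$ as an infimum, so the whole proof is a short two-step argument. No new geometric idea is required; the real work was done in the Assertion.

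First I would dispatch the upper bound. By definition $\sr(M)=\smt(M)/\mst(M)$, and since adding Steiner points can only decrease the length of the optimal connection (a spanning tree for $M$ is the particular case of $\mst(N)$ with $N=M$), we have $\smt(M)\le\mst(M)$, so $\sr(M)\le 1$ for every admissible finite $M\subset X$. Taking the infimum over all such $M$ preserves the inequality, giving $\sr(X)\le 1$.

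For the lower bound I would apply the previous Assertion directly: it tells us that $\sr(M)>1/2$ for every finite $M\subset X$ with $|M|\ge 2$. Since every element of the set $\{\sr(M):M\subset X,\,|M|\ge 2\}$ is strictly greater than $1/2$, its infimum is at least $1/2$; that is, $\sr(X)\ge 1/2$. (Here the inequality becomes non-strict because an infimum of numbers strictly exceeding $1/2$ may still equal $1/2$; only the non-strict bound is claimed, so this is exactly what we need.)

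There is no real obstacle — the only conceptual point worth flagging is precisely this passage from the strict bound $\sr(M)>1/2$ of the Assertion to the non-strict bound $\sr(X)\ge 1/2$ of the Corollary, which one should not try to strengthen without further argument (indeed, computing the actual value $\sr(X)$ for concrete spaces such as $\R^2$ is famously hard and is the subject of later material). Combining the two displayed inequalities yields $1/2\le\sr(X)\le 1$, completing the proof.
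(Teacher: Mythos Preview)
Your proof is correct and matches the paper's approach: the paper states this result as an immediate Corollary of the preceding Assertion without giving a separate proof, and what you have written is exactly the short justification that makes it a corollary. The one point you explicitly flag --- that the strict inequality $\sr(M)>1/2$ becomes the non-strict $\sr(X)\ge1/2$ upon taking the infimum --- is the only thing worth noting, and you handle it correctly.
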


\begin{exe}
Verify, that for any $r\in[1/2,1]$ there exists a metric space $(X,\r)$ with $\sr(X)=r$, see corresponding examples in~\cite{ITBookRFFI}.
\end{exe}

Sometimes, it is convenient to consider so-called {\em Steiner ratios $\sr_n(X)$ of degree $n$}, where $n\ge2$ is an integer, which are defined as follows: $\sr_n(X)=\inf_{M\subset X,|M|\le n}\sr(M)$. Evidently, $\sr_2(X)=1$. It is also clear that $\sr(X)=\inf_n\sr_n(X)$.

Steiner ratio was firstly defined for the Euclidean plane in~\cite{GilPol}, and during the following years the problem of Steiner ratio calculation is one of the most attractive, interesting and difficult problems in geometrical optimization. A short review can be found in~\cite{ITBookRFFI} and in~\cite{CiesBookSR}. One of the most famous stories here is connected with several attempts to prove so-called {\em Gilbert--Pollack Conjecture}, see~\cite{GilPol}, saying that $\sr(\R^2,\r_2)=\sqrt{3}/2$, where $\r_2$ stands for the Euclidean metric, and hence $\sr(\R^2,\r_2)$ is attained at the vertex set of a regular triangle, see Figure~\ref{fig:tri}. In 1990s D.\,Z.~Du and F.\,K.~Hwang announced that they proved the Steiner Ratio Gilbert--Pollak Conjecture~\cite{DuHwang90}, and their proof was published in Algorithmica~\cite{DuHwang}. In spite of the appealing ideas of the paper, the questions concerning the proof appeared just after the publication,
because the text did not appear formal. And about 2003--2005 it becomes clear that the gaps in the D.\,Z.~Du and F.\,K.~Hwang work are too deep and can not be repaired, see detail in~\cite{ITAlg}.

\subsection{Steiner Ratio of Small Degrees for Euclidean Plane}
Gilbert and Pollack calculated $\sr_3(\R^2,\r_2)$ in their paper~\cite{GilPol}. We include their proof here.

Since the Steiner ratio of a regular triangle is equal to $\sqrt{3}/2$, then $\sr_3(\R^2,\r_2)\le\sqrt{3}/2$, so we just need to prove the opposite inequality. To do this, consider a triangle $ABC$ in the plane. If one of its angles is more than or equal to $120^\c$, then the shortest tree coincides with minimal spanning tree, so in this case $\sr(ABC)=1$. So it suffices to consider the case when all the angles of the triangle are less than $120^\c$.

Let $S$ be the Torricelli point of the triangle $ABC$. Show firstly that $|AS|\le|BS|$, if and only if $|BC|\ge|AC|$, i.e\. the shortest edge of the Steiner minimal tree lies opposite with the longest side of the triangle. The proof is shown in Figure~\ref{fig:sr3}, left. Indeed, if $|BS|<|AS|$, then we take the point $B'\in[S,B]$ with $|SB'|=|SA|$, hence $|CB'|=|CA|$ due to symmetry and $|CB'|<|CB|$ because $B'\ge120^c$. Conversely, if $|BC|>|B'C|$, then there exists $B'\in[B,S]$ with $|CB'|=|CA|$, because $|BC|>|CA|>|SC|$. Then $|AS|=|SB'|<|SB|$.

\Pic{sr3}{To the calculation of $\sr_3(\R^2,\r_2)$.}{fig:sr3}{144}

Thus, the two-edges tree $T=[A,B]\cup[B,C]$ is a minimal spanning tree for $ABC$, if and only if $BC$ is the longest side of $ABC$, if and only if $|AS|\le|BS|$ and $|AS|\le|CS|$. Consider the points $E\in[B,S]$ and $D\in[C,S]$, such that $|AS|=|ES|=|DS|$, and put $x=|AC|$, $y=|AB|$, $z=|DE|=|AD|=|AE|$, and $x'=|CD|$, $y'=|EB|$. Then $|SA|=|SE|=|SD|=z/\sqrt{3}$ and
$$
\smt(M)=3|SA|+|DC|+|EB|=\sqrt{3}z+x'+y' \qquad\text{and}\qquad \mst(M)=x+y,
$$
where $M$ stands for the set $\{A,B,C\}$.
But $x\le x'+z$ and $y\le y'+z$, due to the triangle inequality, and hence
$$
\sr(M)=\frac{\sqrt{3}z+x'+y'}{x+y}\ge\frac{\sqrt{3}z+x'+y'}{x'+z+y'+z}=\frac{\sqrt{3}z+x'+y'}{x'+y'+2z}\ge\frac{\sqrt{3}}{2}.
$$
Thus, we proved the following statement.

\begin{ass}
The following relation is valid: $\sr_3(\R^2,\r_2)=\sqrt{3}/2$.
\end{ass}

\begin{rk}
For small $n$ it is already proved that $\sr_n(\R^2,\r_2)=\sqrt{3}/2$ (recently O.~de~Wet proved it for $n\le7$, see~\cite{deWet}). The proof of de Wet is based on the analysis of Du and Hwand method from~\cite{DuHwang} and understanding that it works for boundary sets with $n\le 7$ points.  Also in 60th several lower bounds for $\sr(\R^2,\r_2)$ were obtained, and the best of them is worse than $\sqrt{3}/2$ in the third digit only.
\end{rk}

\begin{prb}
Very attractive problem is to prove that $\sr(\R^2,\r_2)=\sqrt{3}/2$, i.e\. to prove Gilbert--Pollack Conjecture. The attempts to repair the proof of Du and Hwang have remained unsuccessful, so some fresh ideas are necessary here.
\end{prb}

\subsection{Steiner Ratio of Other Euclidean Spaces and Riemannian Manifolds}

The following result is evident, but useful.

\begin{ass}
If $Y$ is a subspace of a metric space $X$, i.e\. the distance function on $Y$ is the restriction of the distance function of $X$, then $\sr(Y)\ge\sr(X)$.
\end{ass}

This implies, that $\sr(\R^n,\r_2)\le\sr(\R^2,\r_2)\le\sqrt{3}/2$. Recall that Gilbert--Pollack conjecture implies that the Steiner ratio of Euclidean plane attains at the vertex set of a regular triangle. In multidimensional case the situation is more complicated. The following result was obtained by Du and Smith~\cite{DuSmith}

\begin{ass}
If $M\subset\R^n$ is the vertex set of a regular $n$-dimensional simplex, then $\sr(M)>\sr(\R^n,\r_2)$ for $n\ge3$.
\end{ass}

\begin{proof}
Consider the boundary set $P$ in $\R^{n+1}$, consisting of the following $1+n(n+1)$ points: one point $(0,\ldots,0)$ and $n(n+1)$ points all whose coordinates except two are zero, one is equal to $1$, and the remaining one is $-1$. It is clear that $P$ is a subset of  $n$-dimensional plane defined by the next linear condition: sum of all coordinates is equal to zero. Represent $P$ as the union of the subsets $P^i=\{x\in P\mid x^i=1\}\cup \{(0,\ldots,0)\}$. Notice that each set $P^i$, $i=1,\ldots,n+1$, consists of $n+1$ points and forms the vertex set of an regular $n$-dimensional simplex (to see that it suffices to verify that all the distances between the pairs of points from $P^i$ are the same and are equal to $\sqrt{2}$). The configuration of $7$ points in $\R^3$ is shown in Figure~\ref{fig:DuSmith} (this case is not important for us, but it is easy to draw). Now, $\mst(P)=(n+1)\mst(P^i)$, but for $n\ge 3$ we conclude that $\smt(P)<(n+1)\smt(P^i)$, because the degree of the vertex $(0,\ldots,0)$ in the corresponding network which is the union of the shortest networks for $P^i$ is equal to $n+1\ge4$ that is impossible in the shortest network due to the Local Structure Theorem~\ref{th:LStr}. So,
$$
\sr(P)=\smt(P)/\mst(P)<\frac{(n+1)\smt(P^i)}{(n+1)\mst(P^i)}=\sr(P^i).
$$
\end{proof}

\Pic{3dTetr}{Construction of the set $P$ in $\R^3$ (non-interesting but visual case $n=2$).}{fig:DuSmith}{144}

Taking as a heuristic for the length of a shortest tree connecting the vertex set of regular simplex the length of the network joining the center of the simplex with all its vertices we get the following estimate.

\begin{cor}
For any $n\ge 3$ the upper estimate
$$
\sr(\R^n,\r_2)<\sqrt{\frac{1}2+\frac{1}{2n}}
$$
is valid.
\end{cor}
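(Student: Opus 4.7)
The plan is to apply the definition of $\sr(\R^n,\r_2)$ to a single well-chosen boundary set, namely the vertex set $V$ of a regular $n$-dimensional simplex of unit edge length in $\R^n$, and to majorize $\smt(V)$ by the length of the explicit star-like network from the centroid to all vertices.

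First I would note that since all $\binom{n+1}{2}$ pairwise distances between points of $V$ equal $1$, every spanning tree on $V$ has weight $n$; in particular $\mst(V)=n$. Next I would compute the circumradius $r_n$ of the regular $n$-simplex with unit edges: using the standard embedding of vertices as $e_1,\ldots,e_{n+1}$ in $\R^{n+1}$ (which has edge length $\sqrt2$) one finds the distance from the centroid to each vertex equals $\sqrt{n/(n+1)}$, so after rescaling to unit edges one gets
$$
r_n=\sqrt{\frac{n}{2(n+1)}}.
$$
Joining the centroid of the simplex to each of the $n+1$ vertices by a straight segment produces a connecting tree (with one Steiner-type vertex at the center) of total length
$$
L_n=(n+1)\,r_n=\sqrt{\frac{n(n+1)}{2}}.
$$

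Now comes the key point: for $n\ge 3$ this star tree has a vertex (the center) of degree $n+1\ge 4$, and therefore by the Local Structure Theorem~\ref{th:LStr} it cannot be a shortest tree. Hence there exists a strictly shorter network connecting $V$, and so
$$
\smt(V)<L_n=\sqrt{\frac{n(n+1)}{2}}.
$$
Dividing by $\mst(V)=n$ and using $\sr(\R^n,\r_2)\le\sr(V)$ (by definition of the infimum) gives
$$
\sr(\R^n,\r_2)\le\sr(V)=\frac{\smt(V)}{\mst(V)}<\frac{1}{n}\sqrt{\frac{n(n+1)}{2}}=\sqrt{\frac{n+1}{2n}}=\sqrt{\frac{1}{2}+\frac{1}{2n}},
$$
which is the desired estimate.

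There is no real obstacle here, since both $\mst(V)$ and the heuristic length $L_n$ are immediate from elementary simplex geometry; the only subtle point is the strict inequality, but this is precisely what the previous proposition (together with Theorem~\ref{th:LStr}) guarantees --- the star through the center of a regular simplex of dimension $\ge 3$ fails the degree-$\le 3$ condition and is therefore never a Steiner minimal tree.
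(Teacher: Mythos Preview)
Your argument is correct and uses the same heuristic as the paper --- bounding $\smt(V)$ by the length $L_n$ of the star from the centroid of the regular $n$-simplex and dividing by $\mst(V)=n$. The only difference is where the strict inequality enters: the paper obtains it from the preceding Du--Smith Assertion, i.e.\ $\sr(\R^n,\r_2)<\sr(V)\le L_n/n$, whereas you obtain it directly at the level of the simplex via Theorem~\ref{th:LStr}, i.e.\ $\sr(\R^n,\r_2)\le\sr(V)<L_n/n$, since the centroid has degree $n+1\ge4$ and hence the star itself cannot be a shortest tree. Your route is slightly more self-contained (it does not actually require the Du--Smith construction at all), while the paper's route packages the strictness into the previously proved Assertion; both arrive at the same bound by the same underlying mechanism.
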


One of the best general low estimates is obtained by Graham and Hwang in~\cite{GH}.

\begin{ass}
For any $n\ge2$ the lower estimate $1/\sqrt3\le\sr(\R^n,\r_2)$ is valid.
\end{ass}

The best known upper estimate for $\R^3$ is obtained by Smith and Smith~\cite{SmSm}. It is attained at  an infinite boundary set which is known as ``Smith sausage'' and depicted in Figure~\ref{fig:SmSm}. The corresponding value, obtained as the limit of the ratios for finite fragments, is as follows:
$$
\sqrt{\frac{283}{700}-\frac{3\sqrt{21}}{700}+\frac{9\sqrt{22-2\sqrt{21}}}{140}}.
$$
Notice that the idea of an infinite set is based on a deep result of Du and Smith estimating from below the number of points in a subset $M$ of $\R^n$ such that $\sr(M)=\sr(\R^n,\r_2)$ by a function $f(n)$ rapidly increasing on $n$, see details in~\cite{DuSmith}. For example, $f(50)=53$, but $f(200)=3\,481\,911$. Therefore, it is difficult to expect to guess a finite set $M$ in $\R^n$ with $\sr(M)=\sr(\R^n,\r_2)$ for large $n$.

\Pic{sausage}{A finite fragment of infinite ``Smith sausage''.}{fig:SmSm}{144}

Recently, the Steiner ratio of the Lobachevskii plane, and hence, of any Lobachevskii space has bin calculated by Innami and Kim, see~\cite{InKim}.

\begin{thm}
Steiner ratio of Lobachevskii space $L^n$ for any $n\ge2$ is equal to $1/2$.
\end{thm}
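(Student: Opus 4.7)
The inequality $\sr(L^n)\ge 1/2$ is already delivered by the Assertion in the previous subsection (applied to any finite $M\subset L^n$), so the work is entirely in the reverse direction: I need to exhibit finite configurations $M_k\subset L^n$ whose ratios $\sr(M_k)$ approach $1/2$. The idea is to exploit the hyperbolic degeneracy of large triangles. Concretely, the hyperbolic law of cosines gives, for a triangle with two sides of length $R$ meeting at angle $\theta$,
\[
\cosh c = 1+\sinh^2 R\,(1-\cos\theta),
\]
whence $c = 2R + 2\log\sin(\theta/2) + o(1)$ as $R\to\infty$. Thus two points on a sphere of radius $R$ about a center $O$, separated by a fixed angle at $O$, are almost twice as far from each other as from $O$.

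Fix $N\ge 3$. Since $n\ge 2$, I may choose $N$ unit tangent vectors at an arbitrary point $O\in L^n$ whose pairwise angular separation is at least $\theta_0:=2\pi/N>0$ (for example, lying in a fixed $2$-plane). For each $R>0$, exponentiate these vectors with parameter $R$ to obtain $M_{N,R}=\{A_1,\ldots,A_N\}$. The star with center $O$ and leaves $A_i$ is a legal Steiner tree, so
\[
\smt(M_{N,R})\le NR.
\]
On the other hand, the asymptotic above (applied with $\theta=\theta_0$) yields a constant $C_N$, depending only on $N$, such that $\|A_iA_j\|\ge 2R-C_N$ for all $i\neq j$ and all sufficiently large $R$. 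Any spanning tree on $N$ vertices has $N-1$ edges, each of weight at least $\min_{i\neq j}\|A_iA_j\|$, so
\[
\mst(M_{N,R})\ge (N-1)(2R-C_N).
\]
Combining,
\[
\sr(M_{N,R})\le \frac{NR}{(N-1)(2R-C_N)}.
\]

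Taking $R\to\infty$ for fixed $N$ gives $\sr(L^n)\le \sr(M_{N,R})\to N/\bigl(2(N-1)\bigr)$; then sending $N\to\infty$ gives $\sr(L^n)\le 1/2$. Combined with the lower bound this forces $\sr(L^n)=1/2$. The step I expect to require the most care is the law-of-cosines asymptotic: one must check that the ``error'' constant $C_N$ really depends only on $N$ (and not on $R$), which is a matter of uniformly estimating $\cosh^{-1}\!\bigl(1+\sinh^2 R\,(1-\cos\theta_0)\bigr)$ from below by $2R-C_N$ for $R$ large. Everything else, namely the star as Steiner competitor and the trivial edge-count bound on the minimum spanning tree, is completely soft; the construction never uses anything specific to $L^n$ beyond exponential growth of the distance function on spheres, which is exactly what distinguishes hyperbolic from Euclidean geometry.
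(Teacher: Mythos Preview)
The paper does not actually prove this theorem; it merely states the result and attributes it to Innami and Kim~\cite{InKim}. So there is no ``paper's own proof'' to compare against, only the cited reference. The one structural hint the paper does give---``the Steiner ratio of the Lobachevskii plane, and \emph{hence}, of any Lobachevskii space''---matches your reduction: the construction lives in a totally geodesic copy of $L^2$, and the general inequality $\sr(L^n)\le\sr(L^2)$ does the rest.

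Your argument is correct in substance. The lower bound is exactly the paper's Corollary ($\sr\ge 1/2$ for any metric space). For the upper bound, your double limit (first $R\to\infty$, then $N\to\infty$) is the natural mechanism, and the law-of-cosines estimate
\[
c \;\ge\; \operatorname{arccosh}\!\bigl(1+\sinh^2 R\,(1-\cos\theta_0)\bigr) \;\ge\; \log\bigl(\sinh^2 R\,(1-\cos\theta_0)\bigr) \;=\; 2R - C_N + o(1)
\]
is routine to make rigorous with $C_N$ depending only on $\theta_0=2\pi/N$, exactly as you anticipate. One wording issue: the star through $O$ is \emph{not} a ``legal Steiner tree'' once $N\ge 4$ (the center has degree $N$, violating the Local Structure Theorem), but that is irrelevant to the inequality you use---it is simply a spanning tree of $M\cup\{O\}$, hence an upper bound for $\smt(M)$ by the very definition $\smt_X(M)=\inf_{N\supset M}\mst_X(N)$. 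Replace that phrase and the proof is clean.
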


For general Riemannian manifold Ivanov, Cieslik and Tuzhilin, see~\cite{ITC}, obtained the following general result.

\begin{thm}
The Steiner ratio of $n$-dimensional Riemannian manifold is less than or equal to the Steiner ratio of the Euclidean space $\R^n$.
\end{thm}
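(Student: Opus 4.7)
The plan is to realize a miniature copy of a near-extremal Euclidean configuration inside a small normal-coordinate patch in $W$, where the Riemannian distance differs from the Euclidean distance of the normal coordinates by a multiplicative factor of $1+O(\lambda^2)$ as the scale $\lambda$ shrinks. Concretely, fix $\epsilon>0$ and, using the definition of $\sr(\R^n,\r_2)$ as an infimum, choose a finite set $M_0=\{A_1,\dots,A_k\}\subset\R^n$ with $\sr(M_0)<\sr(\R^n,\r_2)+\epsilon$. A Euclidean Steiner tree for $M_0$ has all its Steiner points inside the convex hull of $M_0$, so the whole tree is contained in some ball $\bar B_R(0)\subset\R^n$.

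Next, pick any $p\in W$ and fix normal coordinates via the exponential map $\exp_p\colon B_{r_0}(0)\subset T_pW\cong\R^n\to W$, a diffeomorphism onto its image for $r_0$ small. The standard expansion $g_{ij}(v)=\delta_{ij}+O(|v|^2)$ in normal coordinates, together with the Gauss lemma and a comparison between geodesic length and Euclidean segment length of the preimages, yields a uniform estimate
$$d_W(\exp_p\xi,\exp_p\eta)=|\xi-\eta|\bigl(1+\theta(\xi,\eta)\bigr),\qquad |\theta(\xi,\eta)|\le C(|\xi|^2+|\eta|^2),$$
for $\xi,\eta\in B_{r_0/2}(0)$, with $C$ depending only on $W$ and $p$. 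Define $M'_\lambda=\{\exp_p(\lambda A_1),\dots,\exp_p(\lambda A_k)\}\subset W$ for $\lambda$ small enough that $\lambda R<r_0/2$. Then all pairwise $W$-distances among points of $M'_\lambda$, and also among any auxiliary points obtained by transporting Euclidean Steiner points, agree with the Euclidean distances of their preimages up to a factor $1+O(\lambda^2)$ as $\lambda\to 0$.

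This comparison immediately gives the two needed bounds. Transporting the Euclidean Steiner tree for $M_0$ through $\lambda\exp_p$, and replacing straight segments by $W$-geodesics connecting the corresponding images, yields a connecting network for $M'_\lambda$ of $W$-length at most $(1+O(\lambda^2))\lambda\,\smt_{\R^n}(M_0)$, so
$$\smt_W(M'_\lambda)\le (1+O(\lambda^2))\,\lambda\,\smt_{\R^n}(M_0).$$
Conversely, pulling a minimal spanning tree of $M'_\lambda$ back through $\lambda^{-1}\exp_p^{-1}$ and replacing geodesics by straight segments in $\R^n$ gives a spanning tree of $M_0$ of Euclidean weight at most $(1+O(\lambda^2))\lambda^{-1}\mst_W(M'_\lambda)$, so
$$\mst_W(M'_\lambda)\ge (1-O(\lambda^2))\,\lambda\,\mst_{\R^n}(M_0).$$
Dividing,
$$\sr(M'_\lambda)\le \frac{1+O(\lambda^2)}{1-O(\lambda^2)}\,\sr(M_0),$$
which, for $\lambda$ sufficiently small, is less than $\sr(\R^n,\r_2)+2\epsilon$. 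Since $\sr(W)\le\sr(M'_\lambda)$ and $\epsilon>0$ is arbitrary, we conclude $\sr(W)\le\sr(\R^n,\r_2)$.

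The main obstacle is the quantitative estimate in the second paragraph: the uniform multiplicative comparison between $d_W$ and the Euclidean distance of the preimages on a ball of shrinking radius. This is a standard Riemannian calculation (Taylor expansion of $g$ at $p$, the Gauss lemma, and length comparison), but it must be deployed so that the $O(\lambda^2)$ error is uniform over all pairs in the scaled ball, so that both inequalities above hold with the same $O(\lambda^2)$ factor and the quotient converges to $\sr(M_0)$.
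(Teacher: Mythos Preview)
The paper does not prove this theorem; it only states the result and cites~\cite{ITC}. Your argument is correct and is the natural approach: transport a near-extremal Euclidean configuration into a small normal-coordinate patch, where the multiplicative distortion between $d_W$ and the Euclidean distance of the coordinate preimages is $1+O(\lambda^2)$, and let $\lambda\to 0$. This is presumably the strategy of the cited paper as well.

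Two small remarks. First, the minimal spanning tree of $M'_\lambda$ is a purely combinatorial object on the $k$ boundary points with edge weights the pairwise $d_W$-distances; its ``pullback'' is simply the same combinatorial tree on $M_0$, whose Euclidean weight is automatically at least $\mst_{\R^n}(M_0)$. The phrase ``replacing geodesics by straight segments'' is unnecessary here, since only pairwise distances enter. Second, the uniformity of the $O(\lambda^2)$ error that you flag as the main obstacle is automatic once $r_0$ is taken below the convexity radius at $p$: the minimizing $W$-geodesic between any two points of $\exp_p\bigl(B_{\lambda R}(0)\bigr)$ then stays inside the normal-coordinate ball, so the length comparison via $g_{ij}=\delta_{ij}+O(|v|^2)$ applies along the whole geodesic and yields $|\theta|\le C\lambda^2R^2$ uniformly.
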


\section{Minimal  Fillings}\label{sec:mf}
\markright{\thesection.~Minimal Fillings.}
This Section is devoted to minimal fillins, the third kind of optimal connections discussed in the Introduction. This problem appeared as a result of a synthesis of two classical problems: the Steiner problem on the shortest networks (it is discussed in Sections~\ref{sec:smt} and~\ref{sec:sr}), and Gromov's problem on minimal fillings. 

The concept of a minimal filling appeared in papers of Gromov, see~\cite{Gromov}. Let $M$ be a manifold endowed with a distance function  $\rho$. Consider all possible films $W$ spanning $M$, i.e., compact manifolds with the boundary $M$. Consider on  $W$ a distance function $d$ that does not decrease the distances between points in $M$. Such a metric space ${\mathcal W}=(W,d)$ is called a \emph{filling\/} of the metric space ${\mathcal M}=(M,\rho)$, see example in Figure~\ref{fig:MinFil}. The Gromov Problem consists in calculating the infimum of the volumes of the fillings and describing the spaces ${\mathcal W}$ which this infimum is achieved at (such spaces are called \emph{minimal fillings\/}).

\Pic{MinFil}{The space $M$ is the circle $S^1$ with arc-metric. The films $X$ in the both Figures are parts of the standard sphere containing $M$ as a parallel. The left film $X$ is not a filling since the distance between the points $p$ and $q$ in $X$ is less than in $M$ (the shortest path is depicted). The right film $X$ is a filling of $M$.}{fig:MinFil}{144}

In the scope of Steiner problem, it is natural to consider $M$ as a finite metric space. Then the possible fillings are metric spaces having the structure of one-dimensional stratified manifolds which can be considered as graphs whose edges have nonnegative weights. This leads to the following particular case of generalized Gromov problem.

Let $M$ be an arbitrary finite set, and $G=(V,E)$ be a connected graph. We say, that $G$ \emph{connects $M$} or {\em joins $M$}, if $M\subset V$. Now, let ${\mathcal M}=(M,\rho)$ be a finite metric space, $G=(V,E)$ be a connected graph joining $M$, and $\omega\colon E\to{\mathbb R}_+$ is a mapping into non-negative numbers, which is usually referred as a \emph{weight function\/} and which generates the \emph{weighted graph} ${\mathcal G}=(G,\omega)$. The function $\omega$ generates on $V$ the pseudo-metric $d_\omega$ (some distances in a pseudo-metric can be equal to zero), namely, the $d_\omega$-distance between the vertices of the graph  ${\mathcal G}$ is defined as the least possible weight of the paths in ${\mathcal G}$ joining these vertices. If for any two points $p$ and $q$ from $M$ the inequality $\rho(p,q)\le d_\omega(p,q)$ holds, then the weighted graph ${\mathcal G}$ is called a \emph{ filling\/} of the space ${\mathcal M}$, and the graph $G$ is referred as the \emph{type\/} of this filing. The value $\operatorname{mf}({\mathcal M})=\inf\omega({\mathcal G})$, where the infimum is taken over all the fillings ${\mathcal G}$ of the space ${\mathcal M}$ is the \emph{weight of minimal filling}, and each filling ${\mathcal G}$ such that  $\omega({\mathcal G})=\operatorname{mf}({\mathcal M})$ is called a \emph{minimal filling}.

\subsection{Parametric Networks and Optimal Connection Problems}
Here we give a common view on Steiner problem and minimal filling problem in terms of so-called parametric networks in a general metric space.

Let ${\mathcal X}=(X,d)$ be a metric space and $G=(V,E)$ be an arbitrary connected graph. Any mapping $\Gamma\colon V\to X$ is called a \emph{ network in ${\mathcal X}$ parameterized by the graph $G=(V,E)$}, or a \emph{network of the type $G$}. The \emph{vertices} and \emph{edges\/} of the network $\Gamma$ are the restrictions of the mapping $\Gamma$ onto the vertices and edges of the graph $G$, respectively. The \emph{length of the edge\/} $\Gamma\colon vw\to X$ is the value $d\bigl(\Gamma(v),\Gamma(w)\bigr)$, and the \emph{length $d(\Gamma)$ of the network $\Gamma$} is the sum of lengths of all its edges. We shall consider various boundary value problems for graphs. To do that, we fix some subsets $\partial G$ of the vertex sets $V$ of our graphs $G=(V,E)$, and we call such $\partial G$ the \emph{boundaries}. We always suppose that in each graph under consideration a boundary, possibly, an empty one, is chosen. The \emph{boundary $\partial\Gamma$ of a network $\Gamma$} is the restriction of $\Gamma$ onto $\partial G$. If $M\subset X$ is finite and $M\subset\Gamma(V)$, then we say that the network $\Gamma$ \emph{joins\/} or \emph{connects the set $M$}. The vertices of graphs and networks which are not boundary ones are called \emph{interior\/} vertices. The value
 $$
\operatorname{smt}(M)=\inf\bigl\{d(\Gamma)\mid\text{$\Gamma$ is a network joining $M$}\bigr\}
 $$
is called the \emph{length of shortest network for $M$}. Notice that the network $\Gamma$ which joins $M$
and satisfies $d(\Gamma)=\operatorname{smt}(M)$ may not exist, see~\cite{ITLup} and~\cite{Borod} for nontrivial examples.
If such a network exists, it is called a \emph{shortest network connecting $M$}, or \emph{for $M$}.
One variant of the Steiner problem is to describe the shortest networks for finite subsets of metric spaces.%
 \footnote{%
The denotation $\operatorname{smt}$ is an acronym for ``Steiner Minimal Tree'' which is a synonym for the shortest network whose edges are non-degenerate and, thus, it must be a tree.
 }

Now let us define minimal parametric networks in a metric space ${\mathcal X}=(X,d)$. Let $G=(V,E)$ be a connected graph with some boundary $\partial G$, and let $\varphi\colon \partial G\to X$ be a mapping. By $[G,\varphi]$ we denote the set of all networks $\Gamma\colon V\to X$ of the type $G$ such that $\partial\Gamma=\varphi$. We put
 $$
\operatorname{mpn}(G,\varphi)=\inf_{\Gamma\in[G,\varphi]}d(\Gamma)
 $$
and we call this value the \emph{length of minimal parametric network}. If there exists a network $\Gamma\in[G,\varphi]$ such that $d(\Gamma)=\operatorname{mpn}(G,\varphi)$, then $\Gamma$ is called a \emph{minimal parametric network of the type $G$ with the boundary $\varphi$}.

\begin {ass}
Let ${\mathcal X}=(X,d)$ be an arbitrary metric space and $M$ be a finite subset of $X$. Then
 $$
\operatorname{smt}(M)=\inf\bigl\{\operatorname{mpn}(G,\varphi)\mid\varphi(\partial G)=M\bigr\},
 $$
where the infimum is taken over all connected graphs $G$ with a boundary $\partial G$ and all mappings $\varphi\colon\partial G\to X$ with $\varphi(\partial G)=M$.
\end {ass}

Thus, as in the case of the plane, the problem of calculating the length of the shortest network is reduced to investigation of minimal parametric networks.

Let ${\mathcal M}=(M,\rho)$ be a finite metric space and $G=(V,E)$ be an arbitrary connected graph connecting $M$. In this case we always assume that the boundary of such $G$ is fixed and equal to $M$. By $\Omega({\mathcal M},G)$ we denote the set of all weight functions $\omega\colon E\to{\mathbb R}$ such that $(G,\omega)$ is a filling of the space ${\mathcal M}$. We put
 $$
\operatorname{mpf}({\mathcal M},G)=\inf_{\omega\in\Omega({\mathcal M},G)}\omega(G)
 $$
and we call this value the \emph{weight of minimal parametric filling of the type $G$ for the space ${\mathcal M}$}. If there exists a weight function $\omega\in\Omega({\mathcal M},G)$ such that $\omega(G)=\operatorname{mpf}({\mathcal M},G)$, then $(G,\omega)$ is called a \emph{minimal parametric filling of the type $G$ for the space ${\mathcal M}$}.

\begin {ass}
Let ${\mathcal M}=(M,\rho)$ be a finite metric space. Then
 $$
\operatorname{mf}({\mathcal M})=\inf\bigl\{\operatorname{mpf}({\mathcal M},G)\bigr\},
 $$
where the infimum is taken over all connected graphs $G$ joining $M$.
\end {ass}

It is not difficult to show that to investigate shortest networks and minimal fillings one can restrict the consideration to trees such that all their vertices of degree $1$ and $2$ belong to their boundaries. \textbf{In what follows, we always assume that this condition holds, providing the opposite is not declared}.

To be more precise, we recall the following definition. We say that a tree is a \emph{binary\/} one if the degrees of its vertices can be $1$ or $3$ only, and the boundary consists just of all the vertices of degree $1$. Then each finite metric space has a binary minimal filling (possibly, with some degenerate edges), and a non-degenerate minimal filling (whose type is a tree and all whose vertices of degree $1$ and $2$ belong to its boundary in accordance with the above agreement), see~\cite{ITGromov}.

\subsection{Minimal Realization}\label{sec:realization}
It turns out that the problem on minimal filling can be reduced to Steiner problem in special metric spaces and for special boundaries.

Consider a finite set $M=\{p_1,\ldots,p_n\}$, and let ${\mathcal M}=(M,\rho)$ be a metric space. We put $\rho_{ij}=\rho(p_i,p_j)$. By $\R_\infty^n$  we denote the $n$-dimensional arithmetic space with the norm
 $$
\bigl\|(v^1,\ldots,v^n)\bigr\|_\infty=\max\bigl\{|v^1|,\dots,|v^n|\bigr\},
 $$
and by $\rho_\infty$ the metric on $\R_\infty^n$ generated by $\|\cdot\|_\infty$, i.e., $\rho_\infty(v,w)=\|w-v\|_\infty$. Let us define a mapping $\varphi_{\mathcal M}\colon M\to\R_\infty^n$ as follows:
 $$
\varphi_{\mathcal M}(p_i)={\bar p}_i=(\rho_{i1},\ldots,\rho_{in}).
 $$

\begin{ass}\label{prop:isom_embedding_ellinfty}
The mapping $\varphi_{\mathcal M}$ is an isometry with its image.
\end{ass}

\begin{proof}
This easily follows from the triangle inequality. Indeed,
$$
\bigl\|\bar p_i-\bar p_j\bigr\|=\max_k|\rho_{ik}-\rho_{jk}|\ge\rho_{ij},
$$
because the value $\rho_{ij}$ stands at the $i$th and $j$th places of the vector $\bar p_i-\bar p_j$. On the other hand,
$\rho_{ij}\ge\rho_{ik}-\rho_{jk}$ for any $k$, due to the triangle inequality, hence $\bigl\|\bar p_i-\bar p_j\bigr\|\le \rho_{ij}$, and Assertion is proved.
\end{proof}

The mapping $\varphi_{\mathcal M}$ is called the \emph{Kuratowski isometry}.

Let ${\mathcal G}=(G,\omega)$ be a filling of a space ${\mathcal M}=(M,\rho)$, where $G=(V,E)$, and $d_\omega$ be the pseudo-metric on $V$ generated by the weight function $\omega$. Denote by $E_M$ the edges set of the complete graph on $M$ and put ${\bar G}=(V,{\bar E}=E\cup E_M)$. Let ${\bar\omega}$ be the weight function on ${\bar E}$ coinciding with metric $\rho$ on $E_M$ and with $\omega$ on ${\bar E}\setminus E_M$. Recall that $d_{\bar\omega}$ denotes the pseudo-metric on $V$ generated by ${\bar\omega}$.

We define the network $\Gamma_{\mathcal G}\colon V\to\R_\infty^n$ of the type $G$ as follows:
 $$
\Gamma_{\mathcal G}(v)=\bigl(d_{{\bar\omega}}(v,p_1),\ldots,d_{{\bar\omega}}(v,p_n)\bigr).
 $$
This network is called the \emph{Kuratowski network for the filling ${\mathcal G}$}.

\begin{ass}\label{prop:extend}
We have $\partial\Gamma_{\mathcal G}=\varphi_{\mathcal M}$.
\end{ass}

\begin{proof}
This easily follows from the filling definition. Indeed, the mapping $\partial\Gamma_{\mathcal G}$ is defined on the set $M$ only. By definition,
 $$
\Gamma_{\mathcal G}(p_i)=\bigl(d_{{\bar\omega}}(p_i,p_1),\ldots,d_{{\bar\omega}}(p_i,p_n)\bigr),
 $$
hence it suffices to show that $d_{{\bar\omega}}(p_i,p_k)=\rho_{ik}$ for any $k$. The vertices $p_i$ and $p_k$ are joined by the edge $p_ip_k$ of the weight $\rho_{ik}$ in the graph $\bar G$, and the weight of any other path in $G$ connecting $p_i$ and $p_k$ is more than or equal to $\rho_{ik}$, because $G$ is a filling. Assertion is proved.
\end{proof}

For any network $\Gamma$ in a metric space $(X,d)$ by $\omega_\Gamma$ we denote the \emph{weight function on $G$ induced by the network $\Gamma$}, i.e., $\omega_\Gamma(vw)=d\bigl(\Gamma(v),\Gamma(w)\bigr)$.

\begin{cor}\label{cor:induced_from_ell}
Let ${\mathcal G}=(G,\omega)$ be a minimal parametric filling of a metric space $(M,\rho)$ and $\Gamma=\Gamma_{\mathcal G}$ be the corresponding Kuratowski network. Then $\omega=\omega_\Gamma$.
\end{cor}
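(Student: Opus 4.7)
The plan is to show the two-sided inequality $\omega_\Gamma \le \omega$ pointwise on edges, then use minimality to upgrade it to equality. The hinge is that $\omega_\Gamma$, viewed as a weight function on $G$, is itself admissible as a filling, so it cannot have strictly smaller total weight than the minimal $\omega$.

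First I would verify the pointwise bound. For any edge $e=vw$ of $G$, by the definition of $\Gamma_{\mathcal G}$ and the $\ell_\infty$ norm,
$$
\omega_\Gamma(vw)=\|\Gamma_{\mathcal G}(v)-\Gamma_{\mathcal G}(w)\|_\infty=\max_k\bigl|d_{\bar\omega}(v,p_k)-d_{\bar\omega}(w,p_k)\bigr|,
$$
and the triangle inequality for the pseudo-metric $d_{\bar\omega}$ gives each term on the right bounded by $d_{\bar\omega}(v,w)\le\omega(vw)$. Hence $\omega_\Gamma(e)\le\omega(e)$ for every edge $e$.

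Next I would check that $(G,\omega_\Gamma)$ is still a filling of $\mathcal M$. Given any path $p_i=v_0,v_1,\ldots,v_m=p_j$ in $G$, its $\omega_\Gamma$-weight satisfies
$$
\sum_{s=0}^{m-1}\omega_\Gamma(v_sv_{s+1})\ge\sum_{s=0}^{m-1}\bigl|d_{\bar\omega}(v_s,p_i)-d_{\bar\omega}(v_{s+1},p_i)\bigr|\ge\bigl|d_{\bar\omega}(p_i,p_i)-d_{\bar\omega}(p_j,p_i)\bigr|=\rho_{ij},
$$
where in the last step I use $d_{\bar\omega}(p_i,p_i)=0$ and the computation from Proposition~\ref{prop:extend} showing $d_{\bar\omega}(p_j,p_i)=\rho_{ij}$. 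Taking the infimum over paths gives $d_{\omega_\Gamma}(p_i,p_j)\ge\rho_{ij}$, so $(G,\omega_\Gamma)$ indeed fills $\mathcal M$.

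Finally I would combine these facts with minimality. Since $(G,\omega)$ is a \emph{minimal parametric} filling of type $G$ and $(G,\omega_\Gamma)$ is another filling of the same type, $\omega_\Gamma(G)\ge\omega(G)$. But the pointwise inequality $\omega_\Gamma\le\omega$ implies $\omega_\Gamma(G)\le\omega(G)$; together these force $\omega_\Gamma(G)=\omega(G)$, and since all the pointwise differences $\omega(e)-\omega_\Gamma(e)$ are non-negative and sum to zero, they vanish individually. Thus $\omega=\omega_\Gamma$, which proves the corollary. The one subtle point to watch is that the step verifying $(G,\omega_\Gamma)$ is a filling uses the preparatory Proposition~\ref{prop:extend} in an essential way; otherwise the argument is a straightforward exercise in the $\ell_\infty$ triangle inequality and the minimality hypothesis.
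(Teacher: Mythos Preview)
Your argument is correct. The paper states this corollary without proof, treating it as an immediate consequence of Assertion~\ref{prop:extend} and the definition of minimal parametric filling; your three-step argument (pointwise bound $\omega_\Gamma\le\omega$ via the $d_{\bar\omega}$ triangle inequality, verification that $(G,\omega_\Gamma)$ is again a filling, and minimality forcing equality) is exactly the natural way to unpack that implication.
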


Let $\Gamma$ be a network in a metric space ${\mathcal X}$, let $G$ be its parameterizing graph, and ${\mathcal H}=(H,\omega)$ be a weighted graph. We say that \emph{$\Gamma$ and ${\mathcal H}$ are isometric}, if there exists an isomorphism of the weighted graphs ${\mathcal H}$ and ${\mathcal G}=(G,\omega_\Gamma)$.

Corollary~\ref{cor:induced_from_ell} and the existence of minimal parametric and shortest networks in a finite-dimensional normed space~\cite{ITBookWP} imply the following result.

\begin{cor}\label{cor:Kurat_mpf}
Let ${\mathcal M}=(M,\rho)$ be a metric space  consisting of $n$ points, and $\varphi_{\mathcal M}\colon M\to\R_\infty^n$ be the Kuratowski isometry. For any graph $G$ joining $M$ there exists a minimal parametric filling of the type $G$ of the space ${\mathcal M}$. Each minimal parametric filling of the type $G$ of the space ${\mathcal M}$ is isometric to the corresponding Kuratowski network, which is, in this case, a minimal parametric network of the type $G$ with the boundary $\varphi_{\mathcal M}$. Conversely, each minimal parametric network of the type $G$ on $\varphi_{\mathcal M}(M)$ is isometric to some minimal parametric filling of the type $G$ of the space ${\mathcal M}$.
\end{cor}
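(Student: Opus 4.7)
The plan is to establish a correspondence, via the Kuratowski construction $\cG\mapsto\Gamma_\cG$ and its ``inverse'' $\Gamma\mapsto(G,\om_\Gamma)$, between parametric fillings of $\cM$ of type $G$ and parametric networks of type $G$ in $\R^n_\infty$ with boundary $\v_\cM$, under which the weight of a filling and the length of the corresponding network become equal at the level of the infima.

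First I would prove $\operatorname{mpn}(G,\v_\cM)=\operatorname{mpf}(\cM,G)$ by two separate inequalities. Given any filling $\cG=(G,\om)\in\Omega(\cM,G)$, the Kuratowski network $\Gamma_\cG$ satisfies $\d\Gamma_\cG=\v_\cM$ by Assertion~\ref{prop:extend}, and on each edge $vw$ of $G$ the estimate
$$
\bigl\|\Gamma_\cG(v)-\Gamma_\cG(w)\bigr\|_\infty=\max_k\bigl|d_{\bar\om}(v,p_k)-d_{\bar\om}(w,p_k)\bigr|\le d_{\bar\om}(v,w)\le\om(vw)
$$
follows from the triangle inequality for the pseudo-metric $d_{\bar\om}$ together with the bound $d_{\bar\om}(v,w)\le\om(vw)$, where the latter uses the filling property of $\cG$ to handle the case $vw\in E_M$. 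Summing over edges gives $d(\Gamma_\cG)\le\om(\cG)$, whence $\operatorname{mpn}(G,\v_\cM)\le\operatorname{mpf}(\cM,G)$. Conversely, for any $\Gamma\in[G,\v_\cM]$, the induced weighted graph $(G,\om_\Gamma)$ is a filling of $\cM$: the $\om_\Gamma$-weight of any path in $G$ joining $p_i$ and $p_j$ is a sum of $\|\cdot\|_\infty$-distances along consecutive images of $\Gamma$, hence is at least $\bigl\|\Gamma(p_i)-\Gamma(p_j)\bigr\|_\infty=\r(p_i,p_j)$ by the triangle inequality in $\R^n_\infty$ and Assertion~\ref{prop:isom_embedding_ellinfty}. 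Thus $d_{\om_\Gamma}(p_i,p_j)\ge\r(p_i,p_j)$, so $(G,\om_\Gamma)\in\Omega(\cM,G)$, and tautologically $\om_\Gamma(G)=d(\Gamma)$, giving $\operatorname{mpf}(\cM,G)\le\operatorname{mpn}(G,\v_\cM)$.

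With equality of the infima in hand, the corollary drops out. Existence of a minimal parametric network of type $G$ in the finite-dimensional normed space $\R^n_\infty$ with prescribed boundary $\v_\cM$ is known (see~\cite{ITBookWP}); applied to such a $\Gamma$, the assignment $\Gamma\mapsto(G,\om_\Gamma)$ produces a parametric filling attaining $\operatorname{mpf}(\cM,G)$, so a minimal parametric filling exists. If $\cG$ is any minimal parametric filling, Corollary~\ref{cor:induced_from_ell} gives $\om=\om_{\Gamma_\cG}$, so $\cG$ and $\Gamma_\cG$ are isometric as weighted graphs, and $d(\Gamma_\cG)=\om(\cG)=\operatorname{mpf}(\cM,G)=\operatorname{mpn}(G,\v_\cM)$ identifies $\Gamma_\cG$ as a minimal parametric network. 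Conversely, if $\Gamma$ is a minimal parametric network of type $G$ on $\v_\cM(M)$, then $(G,\om_\Gamma)$ is a parametric filling of weight $d(\Gamma)=\operatorname{mpn}(G,\v_\cM)=\operatorname{mpf}(\cM,G)$, hence minimal, and $\Gamma$ is isometric to it by the very definition of $\om_\Gamma$.

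The actual substance lies in the two edge-wise triangle-inequality estimates, together with the already-stated Corollary~\ref{cor:induced_from_ell}. The one external ingredient is existence of minimal parametric networks in a finite-dimensional normed space, and this is the step I anticipate as the chief potential obstacle, were it not already guaranteed by the cited result from~\cite{ITBookWP}.
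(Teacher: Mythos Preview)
Your proposal is correct and follows essentially the same route as the paper, which simply states that the result follows from Corollary~\ref{cor:induced_from_ell} together with the existence of minimal parametric networks in finite-dimensional normed spaces~\cite{ITBookWP}. You have supplied exactly the details that the paper leaves implicit: the two edgewise triangle-inequality estimates giving $\operatorname{mpn}(G,\varphi_{\mathcal M})=\operatorname{mpf}({\mathcal M},G)$, and the resulting translation of minimality back and forth via $\cG\mapsto\Gamma_\cG$ and $\Gamma\mapsto(G,\omega_\Gamma)$.
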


\begin{cor}\label{cor:Kur_mf}
Let ${\mathcal M}=(M,\rho)$ be a metric space  consisting of $n$ points, and $\varphi_{\mathcal M}\colon M\to\R_\infty^n$ be the Kuratowski isometry. Then there exists a minimal filling ${\mathcal G}$  for ${\mathcal M}$, and the corresponding Kuratowski network $\Gamma_{\mathcal G}$ is a shortest network in the space $\R_\infty^n$ joining the set $\varphi_{\mathcal M}(M)$. Conversely, each shortest network on $\varphi_{\mathcal M}(M)$ is isometric to some minimal filling of the space ${\mathcal M}$.
\end{cor}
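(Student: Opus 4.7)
The plan is to upgrade Corollary~\ref{cor:Kurat_mpf}, which establishes the Kuratowski correspondence one type $G$ at a time, into a statement about the full optima $\mf(\cM)$ and $\smt\bigl(\v_\cM(M)\bigr)$ by taking the infimum over all joining graphs $G$ on both sides simultaneously.

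First I would handle the weight identity $\mf(\cM)=\smt\bigl(\v_\cM(M)\bigr)$. By definition $\mf(\cM)=\inf_G\operatorname{mpf}(\cM,G)$, and by the Assertion preceding the definition of $\operatorname{mpn}$, $\smt\bigl(\v_\cM(M)\bigr)=\inf_{G,\psi}\operatorname{mpn}(G,\psi)$ with $\psi(\partial G)=\v_\cM(M)$. Corollary~\ref{cor:Kurat_mpf} says that for each connected graph $G$ joining $M$, a minimal parametric filling of type $G$ of $\cM$ exists, and its weight equals $\operatorname{mpn}(G,\v_\cM)$ (via the isometry witnessed by the Kuratowski network). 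Since any $\psi$ with $\psi(\partial G)=\v_\cM(M)$ is obtained from $\v_\cM$ by a relabeling of boundary vertices of $G$, passing to infimum gives equality of the two values.

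Next I would prove the existence of a minimal filling. Restrict attention to binary types with boundary $M$: there are only finitely many isomorphism classes of such graphs, and each finite metric space admits a binary minimal filling (possibly with degenerate edges), as noted in the paragraph preceding Section~\ref{sec:realization} and proved in~\cite{ITGromov}. Therefore $\mf(\cM)$ equals the minimum of $\operatorname{mpf}(\cM,G)$ over this finite list, and Corollary~\ref{cor:Kurat_mpf} supplies the minimizer of each $\operatorname{mpf}(\cM,G)$. Choosing a type that attains the minimum yields a minimal filling $\cG$, whose associated Kuratowski network $\Gamma_\cG$ has boundary $\v_\cM$ (by Assertion~\ref{prop:extend}) and weight $\operatorname{mpn}(G,\v_\cM)=\operatorname{mpf}(\cM,G)=\mf(\cM)=\smt\bigl(\v_\cM(M)\bigr)$; hence $\Gamma_\cG$ is a shortest network on $\v_\cM(M)$ in $\R_\infty^n$.

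For the converse, let $\Gamma$ be a shortest network on $\v_\cM(M)$, parameterized by some graph $G$ with $\partial\Gamma=\psi$, $\psi(\partial G)=\v_\cM(M)$. After relabeling boundary vertices we may assume $\psi=\v_\cM$. Then $\Gamma$ is in particular a minimal parametric network of type $G$ with boundary $\v_\cM$, so by the second half of Corollary~\ref{cor:Kurat_mpf} it is isometric to a minimal parametric filling $(G,\omega)$ of type $G$ of $\cM$ with $\omega(G)=d(\Gamma)=\smt\bigl(\v_\cM(M)\bigr)=\mf(\cM)$; consequently $(G,\omega)$ is a minimal filling of $\cM$, as required.

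The only real obstacle is the existence question: without the binary minimal filling result cited from~\cite{ITGromov}, one would have to show directly that the infimum in $\mf(\cM)=\inf_G\operatorname{mpf}(\cM,G)$ is attained, which is nontrivial because $G$ ranges over an infinite family of types. Once existence is granted, the rest of the argument is a clean transport along the Kuratowski correspondence supplied by Corollary~\ref{cor:Kurat_mpf}.
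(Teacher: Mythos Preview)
Your argument is correct and follows the same overall strategy the paper intends: upgrade the parametric correspondence of Corollary~\ref{cor:Kurat_mpf} to the non-parametric one by optimizing over all types $G$. The only point of divergence is the existence step. The paper's intended argument, announced in the sentence preceding Corollary~\ref{cor:Kurat_mpf}, invokes the existence of \emph{shortest} (not just minimal parametric) networks in finite-dimensional normed spaces from~\cite{ITBookWP}: since a shortest network on $\v_\cM(M)$ in $\R_\infty^n$ exists, transporting it back via Corollary~\ref{cor:Kurat_mpf} produces a minimal filling of $\cM$. You instead secure existence on the filling side, by reducing to finitely many binary types and then taking a minimum.

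This works, but be careful with what you cite. The sentence you quote (``each finite metric space has a binary minimal filling'') already asserts existence of a minimal filling, so taken literally it is circular here; in~\cite{ITGromov} that statement is in fact derived \emph{from} the Kuratowski realization, i.e.\ from the present corollary. What your argument actually needs is only the reduction $\mf(\cM)=\inf_{G\text{ binary}}\operatorname{mpf}(\cM,G)$, which is elementary (split vertices of degree $\ge4$ by inserting zero-weight edges; this preserves the filling property and the total weight) and does not depend on the corollary being proved. With that adjustment your route is sound, and your closing paragraph correctly identifies the one nontrivial ingredient; the paper's resolution of that ingredient is precisely the normed-space existence theorem from~\cite{ITBookWP}.
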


\subsection{Minimal Parametric Fillings and Linear Programming} \label{sec:exist}
Let ${\mathcal M}=(M,\rho)$ be a finite metric space connected by a (connected) graph  $G=(V,E)$. As above, by $\Omega({\mathcal M},G)$ we denote the set consisting of all the weight functions  $\omega\colon E\to{\mathbb R}_+$ such that ${\mathcal G}=(G,\omega)$ is a filling of ${\mathcal M}$, and by $\Omega_m({\mathcal M},G)$ we denote its subset consisting of the weight functions such that  ${\mathcal G}$ is a minimal parametric filling of  ${\mathcal M}$.

\begin{ass}\label{prop:opt_weight}
The set $\Omega({\mathcal M},G)$ is closed and convex in the linear space ${\mathbb R}^E$ of all the functions on $E$, and $\Omega_m({\mathcal M},G)\subset\Omega({\mathcal M},G)$ is a nonempty convex compact.
\end{ass}

\begin{proof}
It is easy to see, that the set $\Omega({\mathcal M},G)\subset{\mathbb R}^E$ is determined by the linear inequalities of two types: $\omega(e)\ge 0$, $e\in E$, and $\sum_{e\in\g_{pq}}\omega(e)\ge\rho(p,q)$, where $\g_{pq}$ stands for the unique path in the tree $G$ connecting the boundary vertices $p$ and $q$. Therefore, $\Omega({\mathcal M},G)$ is a convex closed polyhedral subset of $\R^E$ that is equal to the intersection of the corresponding closed half-spaces. The weight functions of minimal parametric fillings correspond to minima points of the linear function $\sum_{e\in E}\om(e)$ restricted to the set $\Omega({\mathcal M},G)$. Thus, the problem of minimal parametric filling finding is a linear programming problem, and the set $\Omega_m({\mathcal M},G)$ of all minima points is a nonempty convex compact polyhedron (the boundedness and, hence, compactness of this set follows from increasing of the objective function with respect to each its variable).
\end{proof}

\subsection{Generalized Fillings}
Investigating the fillings of metric spaces, it turns out to be convenient to expand the class of weighted trees under consideration permitting arbitrary weights of the edges (not only non-negative). The corresponding objects are called \emph{generalized fillings}, \emph{minimal generalized fillings\/} and \emph{minimal parametric ge\-ne\-ra\-lized fillings}. Their weights for a metric space ${\mathcal M}$ and a tree $G$ are denoted by $\operatorname{mf}_-({\mathcal M})$ and $\operatorname{mpf}_-({\mathcal M},G)$, respectively.

For any finite metric space ${\mathcal M}=(M,\rho)$ and a tree $G$ connecting $M$, the next evident inequality is valid: $\operatorname{mpf}_-({\mathcal M},G)\le\operatorname{mpf}({\mathcal M},G)$. And it is not difficult to construct an example, when this inequality becomes strict, see~Figure~\ref{fig:mf-minus}. However, for minimal generalized fillings the following result holds, see~\cite{IOST}.

\begin{thm}[Ivanov, Ovsyannikov, Strelkova, Tuzhilin]\label{th:IOST}
For an arbitrary finite metric space ${\mathcal M}$, the set of all its minimal generalized fillings contains its minimal filling, i.e\. a generalized minimal filling with nonnegative weight function. Hence,
$\operatorname{mf}_-({\mathcal M})=\operatorname{mf}({\mathcal M})$.
\end{thm}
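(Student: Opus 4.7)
The strategy is to begin with a minimal generalized filling and, by a finite sequence of local tree surgeries, transform it into one with all nonnegative weights without increasing the total weight. The inequality $\operatorname{mf}_-(\mathcal{M})\le\operatorname{mf}(\mathcal{M})$ is immediate since every filling is a generalized filling, so the real content is to exhibit a minimal generalized filling with $\omega\ge0$.

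I would first establish existence of a minimal generalized filling. The analogue of Assertion~\ref{prop:opt_weight} applies to the generalized case once the sign constraints on the variables are dropped: $\Omega_-(\mathcal{M},G)$ is a closed convex polyhedral subset of $\R^{E(G)}$ cut out by the path inequalities $\sum_{e\in\g_{pq}}\omega(e)\ge\rho(p,q)$. The cyclic-tour lower bound
$$
\omega(G)\;\ge\;\frac12\sum_{i}\rho\bigl(p_{\sigma(i)},p_{\sigma(i+1)}\bigr),
$$
which is valid for all real-valued $\omega$ because it comes from the identity $2\omega(G)=\sum_i\omega(\g_{p_{\sigma(i)}p_{\sigma(i+1)}})$ for a planar tour $\sigma$ on a binary type, combined with the filling inequalities, bounds $\operatorname{mpf}_-(\mathcal{M},G)$ from below uniformly in the type $G$. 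Hence a minimal generalized filling exists; among all minimal generalized fillings, choose $\mathcal{G}=(G,\omega)$ minimising the number of edges with $\omega(e)<0$, and assume for contradiction that this number is still positive.

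Pick a negative edge $e_0=uv$ with $\omega(e_0)=-\alpha$, $\alpha>0$. Removing $e_0$ splits the (WLOG binary) tree $G$ into subtrees $T_u,T_v$ whose boundaries partition $M$ into $M_u,M_v$, and the filling inequality for each cross pair $p\in M_u$, $q\in M_v$ reads
$$
d_\omega^{T_u}(p,u)+d_\omega^{T_v}(v,q)\;\ge\;\rho(p,q)+\alpha,
$$
exhibiting a uniform slack $\alpha$ on every cross path. I then perform a Whitehead (NNI) move at $e_0$: contract $e_0$ to identify $u$ and $v$ into a single degree-$4$ vertex $w$, then re-split $w$ into two degree-$3$ vertices $w_1,w_2$ joined by a new edge $e_1$, producing one of the two other binary topologies on the same four subtrees meeting at $w$. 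Set $\omega'(e_1):=0$ and, on the (few) edges adjacent to the re-split, adjust the weights by constants whose total variation is at most $\alpha$, using the slack budget identified above. I would then verify that the resulting $\omega'$ satisfies every filling inequality and that $\omega'(G')\le\omega(G)$, so $\mathcal{G}'=(G',\omega')$ is again a minimal generalized filling but has strictly fewer negative edges, contradicting the choice of $\mathcal{G}$.

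The main obstacle is the verification, in the last step, that the surgery preserves all $\binom{|M|}{2}$ filling inequalities simultaneously. Pairs that are same-side with respect to $e_0$ and remain same-side across $e_1$ are unaffected, while pairs whose sidedness swaps under the Whitehead move are the delicate ones: one must show that the cross-slack $\alpha$ carried by the removed edge $e_0$ can be re-routed onto the few modified edges so as to absorb exactly the change incurred by these ``swapped'' pairs. A clean way to carry this out is via the linear-programming picture of Section~\ref{sec:exist}: by complementary slackness at the minimum parametric filling, a negative primal variable $\omega(e_0)$ forces the dual (multitour) variables to saturate $e_0$ in a specific combinatorial pattern, and this pattern dictates which of the two Whitehead moves to perform and precisely how to redistribute the weight $\alpha$ among the adjacent edges so that every new cross-path still has nonnegative slack.
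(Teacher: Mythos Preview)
The paper does not include a proof of this theorem; it is stated with attribution and a citation to~\cite{IOST}, and the text moves on immediately to Eremin's formula. So there is no in-text argument to compare against, and I assess your sketch on its own terms.

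Your overall plan --- take a minimal generalized filling with the fewest negative edges and perform an NNI move at a negative interior edge --- is natural, but the surgery you specify is infeasible as written. Label the four subtrees at the negative edge $e_0=uv$ of weight $-\alpha$ by $A,B$ (at $u$) and $C,D$ (at $v$); suppose the NNI move pairs $A,C$ against $B,D$, the new middle edge $e_1$ receives weight $\beta$, and the four adjacent edges are decreased by $\delta_A,\delta_B,\delta_C,\delta_D$. Pairs of type $A$--$B$ and $C$--$D$ carried \emph{no} slack through $e_0$ in $G$, and in $G'$ their paths now traverse $e_1$; the filling inequalities therefore force $\delta_A+\delta_B\le\beta$ and $\delta_C+\delta_D\le\beta$, whence $\sum_i\delta_i\le2\beta$. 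Keeping the total weight from increasing requires $\sum_i\delta_i\ge\alpha+\beta$, so necessarily $\beta\ge\alpha>0$; your prescription $\omega'(e_1)=0$ is thus impossible. Taking the forced value $\beta=\alpha$ \emph{does} yield a one-parameter family of generalized fillings of $G'$ with the same total weight (one finds $\delta_A=\delta_D=t$, $\delta_B=\delta_C=\alpha-t$ for $t\in[0,\alpha]$), so the NNI step can be salvaged --- but now each of the four adjacent edges has been decreased, and nothing you have argued (including the complementary-slackness remark, which only tells you which \emph{path} constraints are tight, not that the neighbouring edge weights are large) prevents one or more of them from turning negative; nor were they guaranteed nonnegative to begin with. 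Hence the ``fewest negative edges'' counter need not drop, and the induction stalls. Producing either a different potential that provably decreases, or a choice rule for which NNI and which $t$ to take, is exactly the substantive content of the result; your proposal locates the right local move but not the termination argument.
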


\Pic{mf-minus}{Minimal parametric filling (left) and minimal generalized parametric filling (right) of the vertex set of the plane rectangle with sides $3$ and $4$. The type is the same: the moustaches connects the diagonal pairs of the vertices. The interior edge has to be zero in the case of the filling and can be negative in the case of the generalized filling.  Here $9=\operatorname{mpf}_-({\mathcal M},G)<\operatorname{mpf}({\mathcal M},G)=10$.}{fig:mf-minus}{144}

\subsection{Formula for the Weight of Minimal Filling}
Let ${\mathcal M}=(M,\rho)$ be a finite metric space, and $G$ be a tree connecting $M$. Choose an arbitrary embedding $G'$ of the tree  $G$ into the plane. Consider a walk around the tree $G'$. We draw the points of $M$ consecutive with respect to this walk as a consecutive points of the circle $S^1$. Notice that each vertex $p$ from $M$ appears $\deg p$ times.  For each vertex $p\in M$ of degree more than $1$, we choose just one arbitrary point from the corresponding points of the circle. So, we construct an injection $\nu\colon M\to S^1$. Define a cyclic permutation $\pi$ as follows:  $\pi(p)=q$, where $\nu(q)$ follows after $\nu(p)$ on the circle $S^1$. We say that $\pi$ \emph{is generated by the embedding $G'$} (this procedure is not unique due to different possible choices of $\nu$). Each $\pi$ generated in this manner is called a \emph{tour of $M$ with respect to $G$}. The set of all tours on $M$ with respect to $G$ is denoted by ${\mathcal O}(M,G)$. For each tour $\pi\in{\mathcal O}(M,G)$ we put
 $$
p({\mathcal M},G,\pi)=\frac1{2}\sum_{x\in M}\rho\bigl(x,\pi(x)\bigr)
 $$
and we call this value by the \emph{half-perimeter of the space ${\mathcal M}$ with respect to the tour $\pi$}. The minimal value of $p({\mathcal M},G,\pi)$ over all $\pi\in{\mathcal O}(M,G)$ for all possible $G$ (in fact, over all possible cyclic permutations $\pi$ on $M$) is called the \emph{half-perimeter of the space ${\mathcal M}$}.

A.~Ivanov and A.~Tuzhilin proposed the following hypothesis.

\begin{conj}\label{conj:min-fill-formula}
For an arbitrary metric space ${\mathcal M}=(M,\rho)$ the following formula is valid
 $$
\operatorname{mf}({\mathcal M})=\min_G\max_{\pi\in{\mathcal O}(M,G)}p({\mathcal M},G,\pi),
 $$
where minimum is taken over all binary trees $G$ connecting $M$.
\end{conj}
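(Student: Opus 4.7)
The plan is to establish the two inequalities $\mf(\cM)\ge\min_G\max_\pi p(\cM,G,\pi)$ and $\mf(\cM)\le\min_G\max_\pi p(\cM,G,\pi)$ separately; the former is a direct walk-around-the-tree computation, while the latter is an LP-duality statement whose combinatorial sharpness is the genuine difficulty.

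\emph{Lower bound.} Let $(G^*,\omega^*)$ be a minimal filling of $\cM$, which by the reduction cited in the text we may take to be a binary tree with $M$ equal to its set of leaves. Fix a planar embedding of $G^*$ and consider the contour walk around the embedded tree. This walk crosses each edge of $G^*$ exactly twice, and visits the leaves in some cyclic order that (after choosing a single representative on $S^1$ for each $p\in M$) gives a tour $\pi\in\mathcal{O}(M,G^*)$. Between consecutive leaves $p$ and $\pi(p)$ the sub-walk is precisely the unique tree path $\gamma_{p,\pi(p)}$, so
$$2\omega^*(G^*)=\sum_{p\in M}\omega^*\bigl(\gamma_{p,\pi(p)}\bigr)\ge\sum_{p\in M}\rho\bigl(p,\pi(p)\bigr)=2\,p(\cM,G^*,\pi),$$
where the inequality uses the defining filling property $d_{\omega^*}(p,q)\ge\rho(p,q)$. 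Ranging over planar embeddings realises every tour in $\mathcal{O}(M,G^*)$, hence $\mf(\cM)=\omega^*(G^*)\ge\max_\pi p(\cM,G^*,\pi)\ge\min_G\max_\pi p(\cM,G,\pi)$.

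\emph{Upper bound via LP duality.} Since $\mf(\cM)=\min_G\operatorname{mpf}(\cM,G)$ with the minimum over binary trees, it suffices to show that for each binary $G$ one has $\operatorname{mpf}(\cM,G)\le\max_\pi p(\cM,G,\pi)$, giving equality together with the reverse inequality above. By Proposition~\ref{prop:opt_weight}, $\operatorname{mpf}(\cM,G)$ is the value of the linear program $\min\sum_e\omega(e)$ subject to $\omega\ge 0$ and $\sum_{e\in\gamma_{pq}}\omega(e)\ge\rho(p,q)$ for all $p\ne q$ in $M$. Its LP dual reads
$$\max\sum_{\{p,q\}}\lambda_{pq}\,\rho(p,q)\quad\text{s.t.}\quad\lambda\ge 0,\ \sum_{\{p,q\}:\,e\in\gamma_{pq}}\lambda_{pq}\le 1\ \forall e\in E.$$
Each tour $\pi$ yields a feasible dual $\lambda^\pi$ with $\lambda^\pi_{pq}=\tfrac12$ on the $n$ consecutive pairs of $\pi$: removing any edge $e$ of $G$ separates $M$ into two parts that form two contiguous arcs of the cyclic order $\pi$, so exactly two consecutive pairs of $\pi$ cross $e$, and the dual constraint on $e$ becomes $2\cdot\tfrac12=1$. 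Its objective value is exactly $p(\cM,G,\pi)$. By strong LP duality one recovers $\operatorname{mpf}(\cM,G)\ge\max_\pi p(\cM,G,\pi)$, and the whole conjecture reduces to proving that the dual optimum is actually \emph{attained} at one of these half-tour indicators $\lambda^\pi$.

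\emph{Main obstacle and proposed attack.} The heart of the conjecture is thus to show that the vertices of the dual polytope $\{\lambda\ge 0:\sum_{\{p,q\}\ni e}\lambda_{pq}\le 1\ \forall e\}$ are, up to the factor $\tfrac12$, tour indicators of $G$. Without this, the dual LP could a priori be maximised at some exotic fractional matching on $M$ that does not come from a cyclic tour. My plan is an induction on $|M|$: starting from a dual optimum $\lambda^*$ for a binary $G$, pick a pair of sister leaves $p,p'$ with common degree-$3$ neighbour $v$; complementary slackness at the two pendant edges $pv$ and $p'v$ forces $\sum_q\lambda^*_{pq}=\sum_q\lambda^*_{p'q}=1$, which constrains how $p$ and $p'$ appear in the support of $\lambda^*$. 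Contracting $p,p'$ to a single new point $v'$ and replacing $\cM$ by a smaller metric space $\cM'$ on $(M\setminus\{p,p'\})\cup\{v'\}$ with suitable distances (for instance $\rho'(v',q)=\tfrac12(\rho(p,q)+\rho(p',q)-\rho(p,p'))$), and replacing $G$ by the binary tree $G'$ obtained by deleting $p,p'$, one hopes to apply the inductive hypothesis to $(\cM',G')$ to obtain a tour $\pi'$ realising $\operatorname{mpf}(\cM',G')$. Lifting $\pi'$ back to a tour of $G$ by inserting $p,p'$ as consecutive neighbours in the cyclic order at the image of $v'$ should then give the required half-tour achieving $\operatorname{mpf}(\cM,G)$. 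The genuine difficulty is twofold: first, making the contraction preserve both the filling property of $\cM'$ and the exact equality of LP values; and second, handling uniformly the various degenerate configurations (negative contracted distances, coinciding leaves, generalized-filling regimes as in Theorem~\ref{th:IOST}) that arise for different metric types. This combinatorial bookkeeping is precisely what has kept the statement at the level of a conjecture.
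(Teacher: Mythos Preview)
Your LP-duality framework is exactly the right lens, but the target you have set yourself is unreachable: the statement you are trying to prove is a \emph{conjecture} that the paper immediately reports to be \emph{false}. Right after Conjecture~\ref{conj:min-fill-formula} the text says that A.\,Yu.~Eremin constructed a counter-example and proved the corrected formula in Theorem~\ref{th:eremin}, where tours are replaced by \emph{multitours}. Thus there exist finite metric spaces $\cM$ with $\operatorname{mf}(\cM)>\min_G\max_{\pi\in{\mathcal O}(M,G)}p(\cM,G,\pi)$, so no argument can close the gap you identified.

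Concretely, your ``lower bound'' $\operatorname{mf}(\cM)\ge\min_G\max_\pi p(\cM,G,\pi)$ is correct, and your dual LP is set up correctly; the failure is exactly where you suspected, in the claim that the dual optimum is attained at a half-tour indicator $\lambda^\pi$. Every multitour of multiplicity $k$ also yields a dual-feasible vector, with weights $1/(2k)$ on its consecutive pairs, and \emph{all} edge constraints are again tight (each edge is traversed $2k$ times). These $\lambda$'s are genuine extreme points of your dual polytope for $k>1$, and Eremin's theorem says the dual maximum equals the supremum of their objective values, which for some $\cM$ strictly exceeds every tour half-perimeter. Your proposed induction therefore cannot succeed: after contracting a pair of sister leaves, complementary slackness at the two pendant edges only gives $\sum_q\lambda^*_{pq}=\sum_q\lambda^*_{p'q}=1$, which is satisfied by multitour indicators just as well as by tour indicators, so nothing in the contraction forces $k=1$. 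In short, your plan is a correct outline of Eremin's proof of Theorem~\ref{th:eremin}, not of Conjecture~\ref{conj:min-fill-formula}; the ``combinatorial bookkeeping'' you allude to does go through, but it produces a multitour rather than a tour.
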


A.\,Yu.~Eremin~\cite{Eremin} constructed a counter-example to the Conjecture~\ref{conj:min-fill-formula} and showed that if one changes the concept of tour by the one of multitour, introduced by him, then the Conjecture~\ref{conj:min-fill-formula} holds.

To define the multitours, let us consider the graph in which every edge of $G$ is taken with the multiplicity $2k$, $k\ge1$. The resulting graph possesses an Euler cycle consisting of \emph{irreducible\/} boundary paths --- the ones which do not contain properly other boundary paths. This Euler cycle generates a bijection $\pi\colon X\to X$, where $X=\sqcup_{i=1}^kM$, which is called \emph{multitour of $M$ with respect to $G$}, see an example in Figure~\ref{fig:multour}. The set of all multitours on $M$ with respect to $G$ is denoted by ${\mathcal O}_\mu(M,G)$.

\Pic{multour}{A part of a moultitour with multiplicity $2$ (left), and the irreducible boundary paths the multitour consists from (right). The multitour starts as a green polygonal line and becomes blue when multiplicity of edges becomes more than $2$.}{fig:multour}{144}

Let ${\mathcal M}=(M,\rho)$ be a finite metric space, and $G$ be a tree connecting $M$. As in the case of tours, for each multitour $\pi\in{\mathcal O}_\mu(M,G)$ we put
 $$
p({\mathcal M},G,\pi)=\frac1{2k}\sum_{x\in X}\rho\bigl(x,\pi(x)\bigr).
 $$

\begin{thm}[A.\,Yu.~Eremin]\label{th:eremin}
For an arbitrary finite metric space ${\mathcal M}=(M,\rho)$ and an arbitrary tree $G$ joining $M$, the weight of minimal parametric generalized filling can be calculated as follows
 $$
\operatorname{mpf}_-({\mathcal M},G)=\max\bigl\{p({\mathcal M},G,\pi)\mid \pi\in{\mathcal O}_\mu(M,G)\bigr\}.
 $$
The weight of minimal filling can be calculated as follows
 $$
\operatorname{mf}({\mathcal M})=\operatorname{mf}_-({\mathcal M})=\min_G\max\bigl\{p({\mathcal M},G,\pi)\mid \pi\in{\mathcal O}_\mu(M,G)\bigr\},
 $$
where minimum is taken over all binary trees $G$ connecting $M$.
\end{thm}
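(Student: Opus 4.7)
The natural approach is through linear programming duality. Following Proposition~\ref{prop:opt_weight} (with the non-negativity of $\omega$ dropped, since we are computing $\operatorname{mpf}_-$), the quantity $\operatorname{mpf}_-(\mathcal{M},G)$ is the value of the linear program
$$
\min\Bigl\{\textstyle\sum_{e\in E}\omega(e)\;\Big|\;\omega\in\R^E,\ \sum_{e\in\gamma_{pq}}\omega(e)\ge\rho(p,q)\text{ for all }\{p,q\}\subset M,\ p\ne q\Bigr\}.
$$
Because the primal variables are sign-unrestricted, the dual LP reads
$$
\max\Bigl\{\textstyle\sum_{\{p,q\}}x_{pq}\rho(p,q)\;\Big|\;x_{pq}\ge 0,\ \sum_{\{p,q\}:\,e\in\gamma_{pq}}x_{pq}=1\text{ for each }e\in E\Bigr\}.
$$
Both programs are easily seen to be feasible and bounded, so strong duality gives equality of the two optima. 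The plan is then to identify the feasible points of the dual with multitours and locate the optimum among them.

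First I would check that every multitour $\pi\in\mathcal{O}_\mu(M,G)$ of multiplicity $2k$ produces a feasible dual vector: setting $x_{pq}=N_{pq}/(2k)$, where $N_{pq}$ counts those $x\in X$ with $\{x,\pi(x)\}$ projecting to the pair $\{p,q\}$, the edge constraint $\sum_{\{p,q\}:\,e\in\gamma_{pq}}x_{pq}=1$ is exactly the statement that in the $2k$-fold graph $2kG$ (each edge of $G$ taken with multiplicity $2k$) every edge is traversed exactly $2k$ times by the irreducible boundary paths of the underlying Euler circuit. By construction, the dual objective at this $x$ is $p(\mathcal{M},G,\pi)$, giving one half of the desired equality.

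For the converse, the dual polytope is cut out by $\{0,1\}$-valued constraints that depend only on $G$, so all its vertices are rational and the LP optimum is attained at some rational vertex $x^*$. Choose $k$ so that $2k\,x^*_{pq}=N_{pq}\in\mathbb{Z}_{\ge 0}$ for every pair; the edge constraint then says that the path multiset $\{\gamma_{pq}\text{ taken with multiplicity }N_{pq}\}$ covers every edge of $G$ exactly $2k$ times, and hence equals the edge multiset of $2kG$. Since $2kG$ is connected with all vertex degrees even, it carries an Euler circuit, and a transition-system argument at interior vertices lets us choose the circuit so that the prescribed paths $\gamma_{pq}$ appear as its irreducible boundary segments. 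Reading off the consecutive boundary visits of this circuit produces a multitour $\pi\in\mathcal{O}_\mu(M,G)$ realizing $x^*$, whose half-perimeter therefore equals $\operatorname{mpf}_-(\mathcal{M},G)$. This splicing step, in which one must simultaneously pair up the $2k$ edge-copies at every interior vertex so that the resulting walks close up into the prescribed paths, is where I expect the most delicate combinatorial work.

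The second formula is an immediate consequence. Theorem~\ref{th:IOST} gives $\operatorname{mf}(\mathcal{M})=\operatorname{mf}_-(\mathcal{M})=\inf_G\operatorname{mpf}_-(\mathcal{M},G)$, and, as noted after Corollary~\ref{cor:Kur_mf}, the infimum may be restricted to binary trees joining $M$, of which there are only finitely many. Substituting the multitour formula for $\operatorname{mpf}_-(\mathcal{M},G)$ converts the infimum into $\min_G\max_{\pi\in\mathcal{O}_\mu(M,G)}p(\mathcal{M},G,\pi)$, as required.
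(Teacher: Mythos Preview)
The paper does not prove this theorem; it is stated as a result of A.\,Yu.~Eremin and attributed to~\cite{Eremin}, so there is no in-text argument to compare your proposal against. That said, linear-programming duality is the natural route and is almost certainly the backbone of Eremin's proof as well. Your primal/dual setup is correct (free primal variables force the dual edge constraints to be equalities), the identification of each multitour with a feasible dual vector is correct, and the deduction of the second formula from the first via Theorem~\ref{th:IOST} together with the restriction to binary trees is correct.

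The step you flag as delicate can be completed, but it needs two ingredients you did not spell out. First, if an optimal dual vertex $x^*$ puts positive weight on a pair $(p,q)$ whose tree path $\gamma_{pq}$ passes through another boundary vertex $r$, shift that weight to the pairs $(p,r)$ and $(r,q)$; feasibility is preserved edge by edge, and by the triangle inequality the objective does not decrease, so it stays optimal. After finitely many such shifts only irreducible pairs carry weight. Second, with the scaled integers $N_{pq}$ you obtain a multigraph $H$ on $M$ (one edge per path), and the required single Euler circuit exists iff $H$ is connected. Even degree is automatic (the dual constraint at a leaf edge gives $\deg_H(p)=2k$), but connectivity is not immediate from the constraints. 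It can be proved by induction on $|M|$: take a moustache $\{p,q\}$ at $v$, delete the two leaf edges to obtain a binary tree $G'$ with boundary $M'=(M\setminus\{p,q\})\cup\{v\}$, and form $H'$ on $M'$ by merging $p,q$ into $v$ and discarding the $pq$-edges. One checks that $H'$ again covers every edge of $G'$ exactly $2k$ times, so by induction $H'$ is connected; and if $H_{pq}=2k$ then $v$ would be isolated in $H'$, contradicting this. Hence $p$ (and symmetrically $q$) has an $H$-edge to some $r\ne q$, and $H$ is connected. An Euler circuit of $H$ then lifts to the desired Euler circuit of $2kG$ with your prescribed irreducible boundary paths, and its half-perimeter equals the dual optimum.
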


\subsection{Minimal Fillings for Generic Metric Spaces}
Theorem~\ref{th:eremin} gives an opportunity to get several interesting corollaries. To formulate one of them, we need to define what is a ``generic'' metric space. Notice that the set of all metric spaces consisting of $n$ points can be naturally identified with a convex cone in ${\mathbb R}^{n(n-1)/2}$ (it suffices to enumerate the set of all two-elements subsets of these spaces and assign to each such space the vector of the distances between the pairs of points). This representation gives us an opportunity to speak about topological properties of families of metric spaces consisting of a fixed number of points.

We say, that some property holds for a {\em generic metric space}, if for any $n$ this property is valid for an everywhere dense set of $n$-point metric spaces.

The following result can be found in~\cite{Eremin}.

\begin{cor}[A.\,Yu.~Eremin]
Each general finite metric space has a minimal filling which is a nondegenerate binary tree.
\end{cor}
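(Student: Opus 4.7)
I would prove this by combining Eremin's Theorem~\ref{th:eremin} with Theorem~\ref{th:IOST}, identifying a dense open subset of the cone of $n$-point metric spaces on which the minimiser in Eremin's formula is unique, lifting the non-negative minimal filling supplied by Theorem~\ref{th:IOST} to this unique binary tree, and then using a Whitehead-type move to rule out zero edges. By Eremin's Theorem, for every binary tree $G$ joining $M$ the function ${\mathcal M}\mapsto\operatorname{mpf}_-({\mathcal M},G)$ is the maximum of finitely many linear functions of the distances $\rho(p,q)$, and
$$
\operatorname{mf}({\mathcal M})=\operatorname{mf}_-({\mathcal M})=\min_{G}\operatorname{mpf}_-({\mathcal M},G),
$$
with minimum taken over the finite set of binary trees on $M$ (the first equality by Theorem~\ref{th:IOST}, the second by Eremin). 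For each pair of distinct binary trees $G_1\neq G_2$ the coincidence set $\{{\mathcal M}:\operatorname{mpf}_-({\mathcal M},G_1)=\operatorname{mpf}_-({\mathcal M},G_2)\}$ is a closed piecewise-linear subset of the cone ${\mathfrak M}_n$; exhibiting, for every such pair, a single metric where the two values differ (one takes a metric strongly favouring one of the two tree topologies) forces the set to have empty interior. Removing the finite union produces a dense open $U\subset{\mathfrak M}_n$ on which the minimum is attained at a unique binary tree $G^\ast=G^\ast({\mathcal M})$.

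For ${\mathcal M}\in U$, I would first lift: Theorem~\ref{th:IOST} provides a non-negative minimal generalised filling $(G',\omega')$, and the standing convention forces the degree-$1,2$ vertices of $G'$ to lie in $M$. Any binary refinement $\tilde G$ of $G'$ inherits a weight function by assigning $0$ to the newly introduced edges; this is still a filling of the same total weight, so $\operatorname{mpf}_-({\mathcal M},\tilde G)\le\omega'(G')=\operatorname{mf}({\mathcal M})$, forcing equality, and uniqueness of $G^\ast$ on $U$ then forces $\tilde G=G^\ast$. Hence $G'$ is obtained from $G^\ast$ by contracting a (possibly empty) set of edges, and lifting $\omega'$ back to $G^\ast$ by zero on the contracted edges yields a non-negative $\omega^\ast$ on $G^\ast$ of total weight $\operatorname{mf}({\mathcal M})$, i.e.\ a minimal filling on the binary tree $G^\ast$. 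To upgrade this to a \emph{non-degenerate} filling, suppose for contradiction $\omega^\ast(e)=0$ for some edge $e$ of $G^\ast$: contract $e$ to a tree $G^\ast/e$ with one vertex $v$ of degree $4$; there are exactly three ways to resolve $v$ into a pair of degree-$3$ vertices joined by a new edge, one of which recovers $G^\ast$, so let $G^\ast_a\neq G^\ast$ be one of the other two. Assigning weight $0$ to the new edge of $G^\ast_a$ and keeping all other weights equal to $\omega^\ast$ produces a non-negative filling of ${\mathcal M}$ on the binary tree $G^\ast_a$ of total weight $\operatorname{mf}({\mathcal M})$, hence $\operatorname{mpf}_-({\mathcal M},G^\ast_a)\leq\operatorname{mf}({\mathcal M})$, and Eremin's formula forces equality; this contradicts the uniqueness of $G^\ast$ on $U$. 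Therefore $\omega^\ast(e)>0$ for every edge $e$, and $(G^\ast,\omega^\ast)$ is the required non-degenerate binary tree minimal filling of ${\mathcal M}$.

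The main obstacle is the genericity assertion in the first paragraph: for each pair of distinct binary trees the piecewise-linear functions $\operatorname{mpf}_-(\cdot,G_1)$ and $\operatorname{mpf}_-(\cdot,G_2)$ must be shown not to agree on any open subset of ${\mathfrak M}_n$. Eremin's formula reduces this to a combinatorial comparison of half-perimeters over multitours, but producing, uniformly for every pair $(G_1,G_2)$, an explicit metric that separates them is where most of the technical work sits; once that is in hand, the lifting and Whitehead-move steps are essentially formal.
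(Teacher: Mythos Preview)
The paper does not supply a proof of this Corollary; it is attributed to Eremin and simply cited. So there is no argument in the paper to compare against, and your proposal has to be assessed on its own merits.

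Your overall architecture---restrict to a dense open set $U$ on which the minimising binary tree $G^\ast$ is unique, lift a non-negative minimal filling to $G^\ast$, and use a Whitehead move to exclude zero-weight edges---is reasonable, but there are two genuine gaps.

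First, the Whitehead-move step only treats \emph{interior} edges. You write ``contract $e$ to a tree $G^\ast/e$ with one vertex $v$ of degree~$4$'', but if $e$ is a pendant edge (one endpoint a leaf $p_i\in M$, the other an interior vertex of degree~$3$) then contraction produces a boundary vertex of degree~$2$, not a degree-$4$ vertex, and there is no alternative binary resolution to appeal to. Nothing in your definition of $U$ rules out $\omega^\ast$ vanishing on a pendant edge. The cleanest repair is to invoke, instead of (or in addition to) Theorem~\ref{th:IOST}, the stronger fact recorded in the paper just before it (from~\cite{ITGromov}): every finite metric space admits a \emph{non-degenerate} minimal filling $(G',\omega')$. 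Starting from such a $(G',\omega')$, the only zero-weight edges of your lift $\omega^\ast$ are the interior edges introduced in the binary refinement, and then your Whitehead argument goes through and in fact shows that, on $U$, the tree $G'$ was already binary.

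Second, your justification that each coincidence set $\{\,{\mathcal M}:\operatorname{mpf}_-({\mathcal M},G_1)=\operatorname{mpf}_-({\mathcal M},G_2)\,\}$ has empty interior is not correct as written. You claim that exhibiting ``a single metric where the two values differ'' forces empty interior, but for piecewise-linear functions this only shows the coincidence set is proper, not nowhere dense: two convex PL functions can agree on an open half-space and disagree elsewhere. You do flag in your final paragraph that this is ``where most of the technical work sits'', but the specific mechanism proposed in the first paragraph does not do that work. One actually has to show that on every open subset of ${\mathfrak M}_n$ the maxima over multitours of $G_1$ and of $G_2$ can be separated, which is a genuinely combinatorial statement about how the multitour half-perimeters of distinct binary topologies differ as linear functionals.
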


\subsection{Additive Spaces and Minimal Fillings} \label{sec:additive}
The additive spaces are very popular in bioinformatics, playing an important role in evolution theory and, more general, in an hierarchy modeling. Recall that a finite metric space ${\mathcal M}=(M,\rho)$ is called \emph{additive}, if $M$ can be joined by a weighted tree ${\mathcal G}=(G,\omega)$ such that $\rho$ coincides with the restriction of $d_\omega$ onto $M$. The tree ${\mathcal G}$ in this case is called a \emph{generating tree\/} for the space ${\mathcal M}$.

Not any metric space is additive. An additivity criterion can be stated in terms of so-called \emph{$4$ points rule}: for any four points $p_i$, $p_j$, $p_k$, $p_l$, the values $\rho(p_i,p_j)+\rho(p_k,p_l)$, $\rho(p_i,p_k)+\rho(p_j,p_l)$, $\rho(p_i,p_l)+\rho(p_j,p_k)$ are the lengths of sides of an isosceles triangle whose base does not exceed its other sides.

\begin{thm}[\cite{Zaretskij}, \cite{SimoesPereira}, \cite{Smolenskij}, \cite{HakimiYau}]\label{prop:add-unique}
A metric space is additive, if and only if it satisfies the $4$ points rule. In the class of non-degenerate weighted trees, the generating tree of an additive metric space is unique.
\end{thm}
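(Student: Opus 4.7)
The plan is to split the statement into three pieces: (i) necessity of the four-point rule, (ii) sufficiency, and (iii) uniqueness of the non-degenerate generating tree, and to handle them in that order.

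For (i), assume $\mathcal M=(M,\rho)$ is generated by a weighted tree $(G,\omega)$. Given any four points $p_i,p_j,p_k,p_l\in M$, look at the subtree $T^*\subset G$ spanned by them. Combinatorially $T^*$ is either a ``star'' with one interior vertex of degree at most four, or a ``caterpillar'' with exactly two interior vertices of degree three joined by an edge. After labelling the edge weights on the pendant edges and on the possible interior edge, the six pairwise distances are linear combinations of these weights; a direct calculation shows that in every case two of the three sums $\rho(p_i,p_j)+\rho(p_k,p_l)$, $\rho(p_i,p_k)+\rho(p_j,p_l)$, $\rho(p_i,p_l)+\rho(p_j,p_k)$ coincide and exceed the third by twice the length of the interior edge (this ``middle edge'' being zero in the star case). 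That gives the isosceles-triangle inequality. This step is routine.

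For (ii), I would proceed by induction on $n=|M|$. The base cases $n=2,3$ are immediate: two points require a single edge, three points a star whose weights are recovered from the standard formulas $a=\frac12\bigl(\rho(p_1,p_2)+\rho(p_1,p_3)-\rho(p_2,p_3)\bigr)$ and cyclic permutations, non-negative by the triangle inequality. For the inductive step, pick any $p_n\in M$ and apply the induction hypothesis to $M'=M\setminus\{p_n\}$, obtaining a non-degenerate generating tree $(G',\omega')$. The task is to find a unique point $q$ on a geodesic of $G'$ (possibly coinciding with a vertex of $G'$) and a weight $t\ge 0$ such that attaching a new edge of length $t$ at $q$ produces a tree realizing all distances to $p_n$. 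To locate $q$, pick two points $p_i,p_j\in M'$ minimizing $\rho(p_i,p_n)+\rho(p_j,p_n)-\rho(p_i,p_j)$; the four-point rule applied to $\{p_i,p_j,p_n,p_k\}$ for varying $p_k$ determines the attachment point on the path from $p_i$ to $p_j$ in $G'$ and the new edge's weight by formulas analogous to the $n=3$ case. The main obstacle is verifying consistency: one must show, using the four-point rule for all quadruples involving $p_n$, that the single choice of $(q,t)$ really reproduces $\rho(p_n,p_k)$ for every $p_k\in M'$, and that when $q$ falls in the interior of an edge the resulting tree remains valid (possibly subdividing that edge with weights summing to the original).

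For (iii), suppose $(G_1,\omega_1)$ and $(G_2,\omega_2)$ are two non-degenerate generating trees of $\mathcal M$. Each edge $e$ of such a tree determines a split $(A\mid B)$ of $M$ given by the two components of the tree with $e$ removed; since both trees are non-degenerate, different edges give different splits. For any $a,a'\in A$ and $b,b'\in B$ the four-point rule on $\{a,a',b,b'\}$ identifies the ``crossing'' sums as the two larger ones and yields $\omega(e)=\frac12\bigl(\rho(a,b)+\rho(a',b')-\rho(a,a')-\rho(b,b')\bigr)$ for the pertinent edge, so once the splits are known the weights are forced. It then remains to check that the set of splits of $G_1$ equals that of $G_2$; I would deduce this by observing that a split $(A\mid B)$ is induced by an edge of some generating tree iff the above edge-weight expression is strictly positive and independent of the choice of $a,a'\in A$ and $b,b'\in B$, a condition read off from $\rho$ alone. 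Hence the split systems coincide, and by the classical bijection between compatible split systems and trees, $G_1\cong G_2$ as weighted trees. The non-degeneracy hypothesis is essential here, as zero-weight edges can be contracted or inserted without changing $\rho$.
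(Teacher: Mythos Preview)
The paper does not actually prove this theorem: it is stated with citations to Zaretskij, Sim\~oes-Pereira, Smolenskij, and Hakimi--Yau, and no argument is supplied in the text. So there is no ``paper's own proof'' to compare against; your plan has to be judged on its own merits as a reconstruction of the classical result.

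Your three-part decomposition is the standard one, and parts (i) and (iii) are essentially complete. The computation in (i) is routine, and the split-system argument in (iii) is the usual way to get uniqueness: each edge of a non-degenerate tree gives a bipartition of $M$, the weight of that edge is recoverable from $\rho$ via the formula you wrote, and the collection of such bipartitions is determined by $\rho$ alone (these are exactly the splits with strictly positive ``isolation index'' in Buneman's sense). That pins down both the combinatorial tree and the weights.

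Part (ii) is where the real work lies, and your plan is honest about this but leaves the crux unresolved. Removing an arbitrary point $p_n$, building $G'$ for $M'$ by induction, and then attaching $p_n$ is a legitimate strategy, but the consistency check you flag is not a formality: you must show that the attachment point $q$ determined by one minimizing pair $(p_i,p_j)$ yields the correct distance to \emph{every} $p_k\in M'$. Concretely, once $q$ and $t$ are fixed from $\{p_i,p_j,p_n\}$, you need that for each $p_k$ the tree distance $d_{G'}(q,p_k)+t$ equals $\rho(p_n,p_k)$; this reduces to an identity among the six pairwise distances on $\{p_i,p_j,p_k,p_n\}$ that follows from the four-point rule together with the fact that $G'$ already realizes the distances on $M'$. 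You should write this step out explicitly, distinguishing the cases according to whether the path from $p_k$ to $q$ in $G'$ meets the $p_i$--$p_j$ path on the $p_i$ side, the $p_j$ side, or at $q$ itself. An alternative, slightly cleaner induction (closer to Zaretskij's original) is to first identify a \emph{cherry} --- two leaves whose pendant edges meet at a common interior vertex --- using the four-point rule, prune one of them, and reattach; this avoids having to locate an attachment point in the interior of an arbitrary edge.
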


The next criterion solves completely the minimal filling problem for additive metric spaces.

\begin{thm}\label{th:additive=minimum}
Minimal fillings of an additive metric space are exactly its generating trees.
\end{thm}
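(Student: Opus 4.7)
The plan is to prove two containments: every generating tree of $\mathcal{M}$ is a minimal filling (forward direction), and every minimal filling is a generating tree (converse).

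For the forward direction, let $\mathcal{G} = (G,\omega)$ be a generating tree of $\mathcal{M}$, so $d_\omega|_M = \rho$. In particular $d_\omega \ge \rho$, so $\mathcal{G}$ is itself a filling. To show minimality, fix an arbitrary filling $\mathcal{H} = (H,\mu)$; we may take $H$ to be a tree. Walk around a planar embedding of $H$ to obtain a tour $\pi$, that is, a cyclic permutation on $M$. Since each edge of $H$ is traversed exactly twice by the walk, the total $\mu$-length of the boundary paths joining consecutive pairs $x$ and $\pi(x)$ equals $2\mu(H)$; and each such path has $\mu$-length at least $\rho(x,\pi(x))$ by the filling condition. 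Now use additivity to rewrite each $\rho(x,\pi(x))$ as $d_\omega(x,\pi(x))$ and decompose along edges of $G$:
\[
\sum_{x \in M} d_\omega\bigl(x,\pi(x)\bigr) \;=\; \sum_{e \in E(G)} \omega(e)\, c_\pi(e),
\]
where $c_\pi(e)$ counts those $x \in M$ whose unique $G$-path to $\pi(x)$ crosses $e$. The key observation is that $\pi$ is a single cycle on $M$, and deleting $e$ splits $M$ into two nonempty classes, so $\pi$ must cross between them an even and nonzero number of times, giving $c_\pi(e) \ge 2$. Thus $\sum_x d_\omega(x,\pi(x)) \ge 2\omega(\mathcal{G})$ and $\mu(\mathcal{H}) \ge \omega(\mathcal{G})$, as needed.

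For the converse, let $\mathcal{H} = (H,\mu)$ be a minimal filling, so $\mu(\mathcal{H}) = \omega(\mathcal{G}) = \mf(\mathcal{M})$ by what we just proved. Set $\rho' := d_\mu|_M$: as a restriction of a tree metric it is again additive, and $\rho' \ge \rho$. After contracting zero-weight edges and smoothing any interior vertices of degree~$2$ in $\mathcal{H}$, we obtain a non-degenerate weighted tree $\mathcal{H}^*$ of the same total weight that is a generating tree for $(M,\rho')$. Applying the forward direction to $(M,\rho')$ gives $\mf(M,\rho') = \omega(\mathcal{H}^*) = \mu(\mathcal{H}) = \mf(M,\rho)$. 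Every filling of $\rho'$ is a filling of $\rho$, so $\mf(M,\rho') \ge \mf(M,\rho)$, with equality excluded unless $\rho' = \rho$; indeed a hypothetical pair $(p,q)$ with $\rho'(p,q) > \rho(p,q)$ can be placed as consecutive vertices of some tour, strictly increasing the corresponding half-perimeter and, via Theorem~\ref{th:eremin}, $\mf$ itself. Hence $\rho' = \rho$, and then the uniqueness statement of Theorem~\ref{prop:add-unique} identifies $\mathcal{H}^*$ with $\mathcal{G}$, showing that $\mathcal{H}$ itself is a generating tree for $\mathcal{M}$.

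The main obstacle is the crossing-count inequality $c_\pi(e) \ge 2$ in the forward direction; this step is precisely where additivity (which produces a tree realization of $\rho$) meets the single-cycle structure of a planar walk around $H$. The converse direction's technical heart is the \emph{strict} monotonicity of $\mf$ in the underlying metric for additive spaces, which is what rules out residual slack $d_\mu|_M > \rho$ at a minimal filling.
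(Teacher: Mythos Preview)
The paper does not actually prove this theorem in the text (it defers to~\cite{ITGromov}), so there is nothing to compare against directly; I will assess correctness.

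Your forward direction is correct and elegant: the crossing-count bound $c_\pi(e)\ge 2$ gives $p(\rho,\pi)\ge\omega(\mathcal{G})$ for every cyclic permutation $\pi$, and hence $\mu(\mathcal{H})\ge\omega(\mathcal{G})$ for every filling $\mathcal{H}$.

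The converse, however, has a real gap at the strict-monotonicity step. You claim that $\rho'\ge\rho$ with $\rho'\ne\rho$ forces $\mf(M,\rho')>\mf(M,\rho)$, and justify this through Theorem~\ref{th:eremin}. But Eremin's formula is a $\min_G\max_\pi$, and increasing one half-perimeter for one tour does not move a min--max. Worse, the general claim is simply false: take any \emph{non}-additive metric $\sigma$ with minimal filling $(H,\mu)$ and set $\sigma':=d_\mu|_M$; then $\sigma'\ge\sigma$, $\sigma'\ne\sigma$ (else $\sigma$ would be additive), yet $\mf(\sigma')=\mu(H)=\mf(\sigma)$ by your own forward direction applied to the additive metric $\sigma'$. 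So the monotonicity you invoke is not a general fact about $\mf$; it must come from the specific situation.

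The fix is to drop Eremin and squeeze directly using the forward direction. You already know $p(\rho,\pi)\ge\omega(\mathcal{G})=\mu(\mathcal{H})$ for \emph{every} cyclic permutation $\pi$. On the other hand, for every tour $\pi$ of $H$ one has $p(\rho',\pi)=\mu(\mathcal{H})$ exactly, since each edge of $H$ is covered twice and $\rho'=d_\mu|_M$. Combining with $\rho\le\rho'$ gives $p(\rho,\pi)=p(\rho',\pi)$ for every tour $\pi$ of $H$, so $\rho$ and $\rho'$ agree on every consecutive pair of every such tour. Now observe that any two boundary vertices $p,q$ of $H$ are consecutive in \emph{some} planar embedding of $H$: at each internal vertex along the $p$--$q$ path, place the two path-edges adjacent in the local cyclic order. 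Hence $\rho'=\rho$ on all pairs, and $\mathcal{H}$ is a generating tree by definition; the appeal to uniqueness in Theorem~\ref{prop:add-unique} is then unnecessary.
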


The next additivity criterion is obtained by O.\,V.~Rubleva, a student of Mechanical and Mathematical Faculty of Moscow State University, see~\cite{Rubleva}.

\begin{ass}[O.\,V.~Rubleva]\label{prop:Rubleva}
The weight of a minimal filling of a finite metric space is equal to the half-perimeter of this space, if and only if this space is additive.
\end{ass}

In the scope of Assertion~\ref{prop:Rubleva}, we conjectured that if there exists a tree connecting a metric space such that all the corresponding half-perimeters are equal to each other, then the space is additive. It turns out that it is not true. Z.\,N.~Ovsyannikov suggested to consider a wider class of spaces, so called pseudo-additive spaces, for which our conjecture becomes true, see~\cite{Ovs}.

A finite metric space ${\mathcal M}=(M,\rho)$ is said to be \emph{pseudo-additive}, if the metric $\rho$ coincides with $d_\omega$ for a generalized weighted tree $(G,\omega)$ (which is also called \emph{generating\/}), where the weight function $\omega$ can take arbitrary (not necessary nonnegative) values. Z.\,N.~Ovsyannikov shows that these spaces can be described in terms of so-called \emph{weak $4$-points rule}: for any four points $p_i$, $p_j$, $p_k$, $p_l$, the values $\rho(p_i,p_j)+\rho(p_k,p_l)$, $\rho(p_i,p_k)+\rho(p_j,p_l)$, $\rho(p_i,p_l)+\rho(p_j,p_k)$ are the lengths of sides of an isosceles triangle. The generating tree is also unique in the class of non-degenerate trees.  Moreover, the following result is valid, see~\cite{Ovs}.

\begin{thm}[Z.\,N.~Ovsyannikov]\label{th:ovs}
Let ${\mathcal M}=(M,\rho)$ be a finite metric space. Then the following statements are equivalent.
\begin{itemize}
 \item There exist a tree $G$ such that $M$ coincides with the set of degree $1$ vertices of $G$ and all the half-perimeters $p(M,G,\pi)$ of $M$ corresponding to the tours around $G$ are equal to each other.
 \item The space ${\mathcal M}$ is pseudo-additive.
\end{itemize}
Moreover, the three $G$ in this case is a generating tree for the space ${\mathcal M}$.
\end{thm}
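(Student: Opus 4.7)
The plan is to prove both directions of the equivalence, establishing the ``moreover'' clause in the course of the nontrivial direction. For the easy direction, pseudo-additive implies constant half-perimeters: if $(G,\omega)$ is a generating tree for $\mathcal{M}$ with $M$ the set of its leaves, then for any planar embedding of $G$ the induced tour $\pi$ visits each leaf once, and the concatenation of the unique $G$-paths joining consecutive leaves in $\pi$ traverses every edge of $G$ exactly twice. Hence $\sum_{p\in M}\rho(p,\pi(p))=\sum_{p\in M}d_\omega(p,\pi(p))=2\,\omega(G)$, so $p(\mathcal{M},G,\pi)=\omega(G)$ is independent of $\pi$.

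For the harder direction, I would induct on $|M|$, constructing the weights directly on the given $G$ via cherry reduction. The base cases $|M|\le 3$ are immediate ($G$ is a star and the usual formulas apply). For $|M|\ge 4$, choose a cherry $\{a,b\}$ with common neighbor $v$, assuming $\deg v=3$ (otherwise first refine $v$ into a binary subtree by inserting zero-weight edges, which preserves all tour half-perimeters). Compare two tours $\pi_1,\pi_2$ that agree outside a neighborhood of $v$ and differ only by interchanging the local cyclic order of the edges $va,vb$, so that the cherry appears as $\ldots,x,a,b,y,\ldots$ in $\pi_1$ and as $\ldots,x,b,a,y,\ldots$ in $\pi_2$; the equality $p(\pi_1)=p(\pi_2)$ reduces to the weak $4$-point relation $\rho(x,a)+\rho(y,b)=\rho(x,b)+\rho(y,a)$. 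Varying over all admissible $(x,y)$ as the planar embedding of the rest of the tree changes, I would conclude that $\rho(a,c)-\rho(b,c)$ is constant in $c\in M\setminus\{a,b\}$.

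This independence lets me unambiguously set $\omega(va)=\tfrac12\bigl(\rho(a,b)+\rho(a,c)-\rho(b,c)\bigr)$ and $\omega(vb)=\tfrac12\bigl(\rho(a,b)+\rho(b,c)-\rho(a,c)\bigr)$, with the useful identity $\omega(va)+\omega(vb)=\rho(a,b)$. I then contract the cherry to form $G'$ (in which $v$ becomes a leaf), $M'=(M\setminus\{a,b\})\cup\{v\}$, and $\rho'(v,c)=\rho(a,c)-\omega(va)=\rho(b,c)-\omega(vb)$, with $\rho'(c,d)=\rho(c,d)$ otherwise. A direct calculation, using the identity $\omega(va)+\omega(vb)=\rho(a,b)$ together with the $4$-point relation (to verify that the two possible liftings yield the same perimeter), shows that every tour $\pi'$ of $G'$ lifts to a tour $\pi$ of $G$ with $p(\mathcal{M},G,\pi)=p(\mathcal{M}',G',\pi')+\rho(a,b)$, so $(\mathcal{M}',G')$ also satisfies the constant-half-perimeter hypothesis. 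By induction $G'$ carries a weight function $\omega'$ with $d_{\omega'}=\rho'$ on $M'$, and extending by $\omega(va),\omega(vb)$ gives weights on $G$ realizing $\rho=d_\omega|_M$; hence $\mathcal{M}$ is pseudo-additive with $G$ as its generating tree.

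The main obstacle I anticipate is the combinatorial claim that cherry-swap tours suffice to force $\rho(a,c)-\rho(b,c)$ to be independent of $c$: this amounts to proving connectivity of the graph on $M\setminus\{a,b\}$ whose edges are the pairs $(x,y)$ realizable as cherry-neighbors of $v$ in some tour, which I would settle by a subsidiary induction on the structure of $G\setminus\{a,b\}$ (using that the cyclic order at each internal vertex of $G\setminus\{a,b\}$ can be chosen independently). Handling internal vertices of $G$ of degree larger than three is a secondary, purely bookkeeping issue absorbed by the zero-weight refinement.
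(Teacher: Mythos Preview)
The paper does not actually prove this theorem; it is stated with attribution to Ovsyannikov and a citation to~\cite{Ovs}, so there is no in-text argument to compare against. I can therefore only assess your proposal on its own merits.

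Your cherry-reduction strategy is the right one, and the computations you sketch (the swap identity $\rho(x,a)+\rho(b,y)=\rho(x,b)+\rho(a,y)$, the Gromov-product weights, and the lifting formula $p(\mathcal M,G,\pi)=p(\mathcal M',G',\pi')+\rho(a,b)$) all check out. Two points deserve attention before the argument is complete.

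First, the induction does not close as stated, because the reduced function $\rho'$ need not be a metric: the inequality $\rho'(v,c)+\rho'(v,d)\ge\rho'(c,d)$ can fail. The remedy is standard---run the induction over arbitrary symmetric real-valued functions $\rho$ on pairs, not just metrics. The notions of tour, half-perimeter, and pseudo-additivity all make sense in that generality, and nothing in your argument uses the triangle inequality.

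Second, in the easy direction you silently assume the generating tree already has $M$ equal to its leaf set. The definition of pseudo-additivity does not guarantee this; some $p\in M$ may occur as an interior vertex. You should note that attaching a zero-weight pendant edge at each such $p$ (and suppressing any resulting degree-$2$ interior vertices) yields a generating tree with the desired property and the same induced distances.

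Your identified ``main obstacle''---connectivity of the graph of admissible neighbour-pairs $(x,y)$---is genuine but not deep. After suppressing degree-$2$ vertices on the path from $v$ into the rest of $G'$, let $w$ be the first branch vertex; the remaining leaves split into the subtrees at $w$, any leaf of any subtree can be made the tour-neighbour of $v$ on its side, and any two distinct subtrees can be placed on either side of $v$ by choosing the cyclic order at $w$. This already gives a connected (indeed complete multipartite) graph on $M\setminus\{a,b\}$.
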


It would be interesting to see, what role these pseudo-additive spaces could play in applications.

\subsection{Examples of Minimal Fillings} \label{sec:examp}
Now let us give several examples of minimal filling and demonstrate how to use the technique elaborated above.

\subsubsection{Triangle}\label{subsec:triangle}
Let ${\mathcal M}=(M,\rho)$ consist of three points $p_1$, $p_2$, and $p_3$. Put $\rho_{ij}=\rho(p_i,p_j)$. Consider the tree $G=(V,E)$ with $V=M\cup\{v\}$ and $E=\{vp_i\}_{i=1}^3$. Define the weight function $\omega$ on $E$ by the following formula:
 $$
\omega(e_i)=\dfrac{\rho_{ij}+\rho_{ik}-\rho_{jk}}2,
 $$
where $\{i,j,k\}=\{1,2,3\}$. Notice that $d_\omega$ restricted onto $M$ coincides with $\rho$. Therefore, ${\mathcal M}$ is an additive space, ${\mathcal G}=(G,\omega)$ is a generating tree for ${\mathcal M}$, and, due to Theorem~\ref{th:additive=minimum}, ${\mathcal G}$ is a minimal filling of ${\mathcal M}$.

Recall that the value $(\rho_{ij}+\rho_{ik}-\rho_{jk})/2$ is called by the {\em Gromov product\/} $(p_j,p_k)_{p_i}$ of the points $p_j$ and $p_k$ of the space ${\mathcal M}$ with respect to the point $p_i$, see~\cite{GromHypGr}.

\subsubsection{Regular Simplex}\label{subsec:simplex}
Let all the distances in the metric space ${\mathcal M}$ are the same and are equal to $d$, i.e.  ${\mathcal M}$ is a regular simplex. Then the weighted tree ${\mathcal G}=(G,\omega)$, $G=(V,E)$, with the vertex set $V=M\cup\{v\}$ and edges $vm$, $m\in M$, of the weight $d/2$ is generating for ${\mathcal M}$. Therefore, the space ${\mathcal M}$ is additive, and, due to Theorem~\ref{th:additive=minimum}, ${\mathcal G}$ is its unique nondegenerate minimal filling. If $n$ is the number of points in $M$, then the weight of the minimal filling is equal to $dn/2$.

\subsubsection{Star}
If a minimal filling ${\mathcal G}=(G,\omega)$ of a space ${\mathcal M}=(M,\rho)$ is a star whose single interior vertex $v$ is joined with each point $p_i\in M$, $1\le i\le n$, $n\ge 3$, then the metric space ${\mathcal M}$ is additive~\cite{ITGromov}. In this case the weights of edges can be calculated easily. Indeed, put $e_i=vp_i$. Since a subspace of an additive space is additive itself, then we can use the results for three-points additive space, see above. So, we have $\omega(e_i)=(p_j,p_k)_{p_i}$, where $p_i$, $p_j$, and $p_k$ are arbitrary distinct  boundary vertices.

\subsubsection{Mustaches of Degree more than 2}
Let $G=(V,E)$ be an arbitrary tree, and $v\in V$ be an interior vertex of degree $(k+1)\ge3$ adjacent with $k$ vertices $w_1,\ldots,w_k$ from $\partial G$. Then the set of the vertices $\{w_1,\ldots,w_k\}$, and also the set of the edges  $\{vw_1,\ldots,vw_k\}$, are referred as {\em mustaches}. The number $k$ is called by the {\em degree}, and the vertex $v$ is called by the {\em common vertex of the mustaches}. An edge incident to $v$ and not belonging to $\{vw_1,\ldots,vw_k\}$ is called the {\em root edge\/} of the mustaches under consideration.

As it is shown in~\cite{ITGromov}, any mustaches of a minimal filling of a metric space forms an additive subspace. If the degree of such mustaches is more than $2$, then we can calculate the weights of all the edges containing in the mustaches just in the same way as in the case of a star.

\subsubsection{Four-Points Spaces}
Here we give a complete description of minimal fillings for four-points spaces, see details in~\cite{ITGromov}.

\begin{prop}
Let $M=\{p_1,p_2,p_3,p_4\}$, and $\rho$ be an arbitrary metric on $M$. Put $\rho_{ij}=\rho(p_i,p_j)$. Then the weight of a minimal filling ${\mathcal G}=(G,\omega)$ of the space ${\mathcal M}=(M,\rho)$ is given by the following formula
 $$
\frac12\bigl(\min\{\rho_{12}+\rho_{34},\,\rho_{13}+\rho_{24},\,\rho_{14}+\rho_{23}\}+
\max\{\rho_{12}+\rho_{34},\,\rho_{13}+\rho_{24},\,\rho_{14}+\rho_{23}\}\bigr).
 $$
If the minimum in this formula is equal to $\rho_{ij}+\rho_{rs}$, then the type of minimal filling is the binary tree with the mustaches $\{p_i,p_j\}$ and $\{p_r,p_s\}$.
\end{prop}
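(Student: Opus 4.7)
The plan is to enumerate binary-tree topologies on four labelled leaves, compute $\operatorname{mpf}_-(\mathcal{M},G)$ for each, and then minimize over tree types. By the binary-tree reduction at the end of Section~\ref{sec:exist} and by Theorem~\ref{th:IOST},
$$\operatorname{mf}(\mathcal{M})=\operatorname{mf}_-(\mathcal{M})=\min_G\operatorname{mpf}_-(\mathcal{M},G),$$
where $G$ runs over binary trees joining $M$. Each such tree has two internal vertices joined by one interior edge with two leaves attached to each, so its isomorphism type is determined by a partition of $M$ into two pairs of mustaches. The three types, which I denote $G_{12|34}$, $G_{13|24}$, $G_{14|23}$, naturally correspond to the three sums $A=\rho_{12}+\rho_{34}$, $B=\rho_{13}+\rho_{24}$, $C=\rho_{14}+\rho_{23}$.

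Fix one type, say $G=G_{12|34}$. I would compute $\operatorname{mpf}_-(\mathcal{M},G)$ either directly from Eremin's Theorem~\ref{th:eremin} or via the LP set-up of Proposition~\ref{prop:opt_weight}. In Eremin's language this four-leaf tree admits only two non-equivalent plane tours, namely $(p_1,p_2,p_3,p_4)$ and $(p_1,p_2,p_4,p_3)$, with half-perimeters $\tfrac12(A+C)$ and $\tfrac12(A+B)$ respectively; a brief LP-dual check (the five edge equalities force $\lambda_{12}=\lambda_{34}=\tfrac12$ and reduce the remaining dual variables to a one-dimensional segment whose endpoints are the two plane tours) confirms that no higher-multiplicity multitour improves on these. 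Thus $\operatorname{mpf}_-(\mathcal{M},G_{12|34})=\tfrac12(A+\max\{B,C\})$, and by the symmetry among the three pairings
$$\operatorname{mpf}_-(\mathcal{M},G_{13|24})=\tfrac12(B+\max\{A,C\}),\quad\operatorname{mpf}_-(\mathcal{M},G_{14|23})=\tfrac12(C+\max\{A,B\}).$$

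Taking the minimum of these three values, I relabel so that $A\le B\le C$; then the candidates become $\tfrac12(A+C)$, $\tfrac12(B+C)$, $\tfrac12(B+C)$, and their minimum $\tfrac12(A+C)=\tfrac12(\min+\max)$ of $A,B,C$ is precisely the claimed formula. The minimizing type is the one whose mustache-sum $\rho_{ij}+\rho_{rs}$ equals $\min\{A,B,C\}$, which matches the structural half of the statement.

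The main obstacle is making the LP/multitour comparison rigorous for each of the three tree types, i.e.\ arguing that the supremum over multitours coincides with the maximum over (just two) plane tours. I would handle this either by the explicit dual-polytope description above (which has a single one-dimensional face whose vertices correspond to the two plane tours) or, more constructively, by exhibiting a specific filling with interior-edge weight $\tfrac12(\max\{B,C\}-A)$ and pendant weights that saturate the appropriate path inequalities; this realizes the lower bound and certifies the LP optimum, also making the minimizing filling explicit.
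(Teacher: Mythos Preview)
Your approach is correct, and in fact the paper does not give a proof of this proposition at all: it states the result and defers the details to~\cite{ITGromov}. So there is no in-text proof to compare against.

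A few remarks on your write-up. The LP-dual computation you sketch is already a complete, self-contained proof and does not actually require Eremin's multitour formula. With your notation, the equality constraints (coming from the fact that generalized fillings allow arbitrary edge weights) for the five edges of $G_{12|34}$ force $\lambda_{12}=\lambda_{34}=\tfrac12$, $\lambda_{13}=\lambda_{24}=t$, $\lambda_{14}=\lambda_{23}=\tfrac12-t$ with $t\in[0,\tfrac12]$, and the dual objective $\tfrac12 A+tB+(\tfrac12-t)C$ is maximized at an endpoint, giving exactly $\tfrac12(A+\max\{B,C\})$. This simultaneously shows that the supremum over multitours is attained at one of the two plane tours and gives the value of $\operatorname{mpf}_-(\mathcal{M},G_{12|34})$; invoking Theorem~\ref{th:eremin} is then unnecessary. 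Alternatively, the primal lower bound follows in one line by averaging the two constraint sums $a_1+a_2+a_3+a_4\ge A$ and $a_1+a_2+a_3+a_4+2e\ge\max\{B,C\}$, and your proposed explicit filling with $e=\tfrac12(\max\{B,C\}-A)$ attains it. Either route removes the ``main obstacle'' you flag. The final minimization over the three topologies and the identification of the optimal mustache pairing are exactly as you describe.
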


We apply the obtained result to the vertex set of a planar convex quadrangle.

\begin{cor}\label{cor:conv4gon}
Let $M$ be the vertex set of a convex quadrangle $p_1p_2p_3p_4\subset{\mathbb R}^2$ and $\rho(p_i,p_j)=\|p_i-p_j\|$. The weight of a minimal filling of the space $(M,\rho)$ is equal to
 $$
\frac12\min\bigl(\rho_{12}+\rho_{34},\,\rho_{14}+\rho_{23}\bigr)+
\frac{\rho_{13}+\rho_{24}}2.
 $$
The topology of minimal filling is a binary tree with mustaches corresponding to opposite sides of the less total length.
\end{cor}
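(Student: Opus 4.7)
The plan is to reduce the corollary to the preceding Proposition and then use a purely planar argument about convex quadrangles to identify which of the three sums $\rho_{12}+\rho_{34}$, $\rho_{13}+\rho_{24}$, $\rho_{14}+\rho_{23}$ is the maximum. Concretely, with the cyclic labeling $p_1p_2p_3p_4$ of a convex quadrangle, the pair $\{p_1p_3,\,p_2p_4\}$ is the pair of diagonals, while the other two sums correspond to pairs of opposite sides. I expect the main (and essentially only) non-trivial point to be showing that the sum of the diagonals dominates each sum of opposite sides.

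For that step, I would use the intersection point $O$ of the diagonals $p_1p_3$ and $p_2p_4$, which exists and lies inside the quadrangle precisely because it is convex. Then $\rho_{13}=|p_1O|+|Op_3|$ and $\rho_{24}=|p_2O|+|Op_4|$, while the triangle inequality in $\triangle p_1Op_2$ and $\triangle p_3Op_4$ gives $\rho_{12}\le|p_1O|+|Op_2|$ and $\rho_{34}\le|p_3O|+|Op_4|$; adding yields
\[
\rho_{12}+\rho_{34}\le \rho_{13}+\rho_{24},
\]
and the analogous argument in $\triangle p_1Op_4$ and $\triangle p_2Op_3$ gives $\rho_{14}+\rho_{23}\le\rho_{13}+\rho_{24}$. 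Hence
\[
\max\{\rho_{12}+\rho_{34},\,\rho_{13}+\rho_{24},\,\rho_{14}+\rho_{23}\}=\rho_{13}+\rho_{24},
\]
and the minimum is $\min\{\rho_{12}+\rho_{34},\,\rho_{14}+\rho_{23}\}$, i.e., the smaller of the two sums of pairs of opposite sides.

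Plugging these into the formula of the preceding Proposition gives exactly
\[
\operatorname{mf}(M,\rho)=\tfrac12\min\{\rho_{12}+\rho_{34},\,\rho_{14}+\rho_{23}\}+\tfrac{\rho_{13}+\rho_{24}}{2},
\]
which is the claimed expression. Finally, the Proposition also says that the topology of a minimal filling is the binary tree with mustaches $\{p_i,p_j\}$ and $\{p_r,p_s\}$ where $\rho_{ij}+\rho_{rs}$ realizes the minimum among the three sums; since here this minimum is achieved on $\{\rho_{12}+\rho_{34},\,\rho_{14}+\rho_{23}\}$, the two mustaches pair up the endpoints of one of the two pairs of opposite sides, namely the pair of smaller total length. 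This completes the reduction, with the only substantive content being the ``sum of diagonals dominates sum of opposite sides'' inequality obtained from the diagonal intersection point.
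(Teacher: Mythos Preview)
Your argument is correct and is exactly the intended one: the paper presents this corollary as a direct application of the preceding four-point Proposition, and the only extra ingredient is the standard fact that in a convex quadrangle the sum of the diagonals dominates each sum of opposite sides, which you prove via the diagonals' intersection point and the triangle inequality. Both the weight formula and the identification of the mustaches follow immediately, just as you wrote.
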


\subsection{Ratios} \label{sec:ratios}
The Steiner ratio is discussed in Section~\ref{sec:sr}. Here we define some other ratios based on minimal fillings, which could be more available for calculating, and which could be useful to calculate the Steiner ratio, as we hope.

\subsection{Steiner--Gromov Ratio}
For convenience, the sets consisting of more than a single point are referred as \emph{nontrivial}. Let ${\mathcal X}=(X,\rho)$ be an arbitrary metric space, and let $M\subset X$ be some finite subset. Recall that by $\operatorname{mst}(M,\rho)$ we denote the length of minimal spanning tree of the space $(M,\rho)$. Further, for nontrivial $M$, we define the value
 $$
\operatorname{sgr}(M)=\operatorname{mf}(M,\rho)/\operatorname{mst}(M,\rho)
 $$
and call it the \emph{Steiner--Gromov ratio\/} of the subset $M$. The value $\inf\operatorname{sgr}(M)$, where the infimum is taken over all nontrivial finite subsets of ${\mathcal X}$, consisting of at most $n$ vertices is denoted by $\operatorname{sgr}_n({\mathcal X})$ and is called the \emph{degree $n$ Steiner--Gromov ratio of the space ${\mathcal X}$}. At last, the value $\inf\operatorname{sgr}_n({\mathcal X})$, where the infimum is taken over all positive integers $n>1$ is called the \emph{Steiner--Gromov ratio of the space ${\mathcal X}$} and is denoted by $\operatorname{sgr}({\mathcal X})$, or by $\operatorname{sgr}(X)$, if it is clear what particular metric on $X$ is considered. Notice that $\operatorname{sgr}_n({\mathcal X})$ is a non-increasing function on $n$. Besides, it is easy to see that $\operatorname{sgr}_2({\mathcal X})=1$ and $\operatorname{sgr}_3({\mathcal X})=3/4$ for any nontrivial metric space ${\mathcal X}$.

\begin{ass}\label{prop:steiner_ratio}
The Steiner--Gromov ratio of an arbitrary metric space is not less than $1/2$. There exist metric spaces whose Steiner--Gromov ratio equals to $1/2$.
\end{ass}

Recently, A.~Pakhomova, a student of Mechanical and Mathematical Faculty of Moscow State University, obtained an exact general estimate for the degree $n$ Steiner--Gromov ratio, see~\cite{Pakh}.

\begin{ass}[A.~Pakhomova]
For any metric space ${\mathcal X}$ the estimate
$$
\operatorname{sgr}_n({\mathcal X})\ge\frac{n}{2n-2}
$$
is valid. Moreover, this estimate is exact, i.e. for any $n\ge 3$ there exists a metric space ${\mathcal X}_n$ such that $\operatorname{sgr}_n({\mathcal X}_n)=n/(2n-2)$.
\end{ass}

Also recently, Z.~Ovsyannikov~\cite{Ovs-h} investigated the metric space of all compact subsets of Euclidean plane endowed with Hausdorff metric.

\begin{ass}[Z.~Ovsyannikov]
The Steiner ratio and the Steiner--Gromov ratio of the metric space of all compact subsets of Euclidean plane endowed with Hausdorff metric are equal to $1/2$.
\end{ass}

\subsection{Steiner Subratio}
Let ${\mathcal X}=(X,\rho)$ be an arbitrary metric space, and let $M\subset{\mathcal X}$ be some its finite subset. Recall that by $\operatorname{smt}(M,\rho)$ we denote the length of Steiner minimal tree joining  $M$. Further, for nontrivial subsets $M$, we define the value
 $$
\operatorname{ssr}(M)=\operatorname{mf}(M,\rho)/\operatorname{smt}(M,\rho)
 $$
and call it by the \emph{Steiner subratio\/} of the set $M$. The value $\inf\operatorname{ssr}(M)$, where infimum is taken over all nontrivial finite subsets of ${\mathcal X}$ consisting of at most $n>1$ points, is denoted by $\operatorname{ssr}_n({\mathcal X})$ and is called the \emph{degree $n$ Steiner subratio of the space ${\mathcal X}$}. At last, the value $\inf\operatorname{ssr}_n({\mathcal X})$, where the infimum is taken over all positive integers  $n>1$, is called the \emph{Steiner subratio of the space ${\mathcal X}$} and is denoted by $\operatorname{ssr}({\mathcal X})$, or by $\operatorname{ssr}(X)$, if it is clear what particular metric  on $X$ is considered. Notice that $\operatorname{ssr}_n({\mathcal X})$ is a nonincreasing function on $n$.  Besides, it is easy to see that $\operatorname{ssr}_2({\mathcal X})=1$ for any nontrivial metric space ${\mathcal X}$.

\begin{prop}
$\operatorname{ssr}_3({\mathbb R}^n)=\sqrt3/2$.
\end{prop}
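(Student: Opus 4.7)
The plan is to prove the two inequalities $\operatorname{ssr}_3(\mathbb{R}^n)\le \sqrt3/2$ and $\operatorname{ssr}_3(\mathbb{R}^n)\ge \sqrt3/2$ separately, using in both cases the explicit formula $\operatorname{mf}(M)=(\rho_{12}+\rho_{13}+\rho_{23})/2$ for a three-point space established in Subsection~\ref{subsec:triangle}.

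First I would check that any three-point set $M=\{p_1,p_2,p_3\}\subset\mathbb{R}^n$ lies in a $2$-plane, so one can work in $\mathbb{R}^2$ and apply the classical Torricelli description of $\operatorname{smt}(M)$. For the upper bound I would take $M$ to be the vertex set of a regular triangle of side~$s$: then the Fermat point is the centroid at distance $s/\sqrt3$ from each vertex, so $\operatorname{smt}(M)=s\sqrt3$, while $\operatorname{mf}(M)=3s/2$, giving $\operatorname{ssr}(M)=\sqrt3/2$.

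For the lower bound I would split on the maximal angle of the triangle $p_1p_2p_3$. Set $a=|p_2p_3|$, $b=|p_1p_3|$, $c=|p_1p_2|$. In the \emph{generic case} where all three angles are at most $120^\circ$, let $T$ be the Fermat point and $t_i=|Tp_i|$, so $\operatorname{smt}(M)=t_1+t_2+t_3$ and, since the three angles at $T$ equal $120^\circ$, the law of cosines gives $|p_ip_j|^2=t_i^2+t_j^2+t_it_j$. The whole proof then reduces to the elementary inequality
\[
\sqrt{t^2+s^2+ts}\;\ge\;\tfrac{\sqrt3}{2}(t+s),\qquad t,s\ge0,
\]
which is equivalent to $(t-s)^2\ge0$. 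Summing over the three pairs yields $a+b+c\ge\sqrt3\,(t_1+t_2+t_3)=\sqrt3\,\operatorname{smt}(M)$, hence $\operatorname{mf}(M)\ge\tfrac{\sqrt3}{2}\operatorname{smt}(M)$, with equality precisely when $t_1=t_2=t_3$, i.e.\ at the regular triangle. In the \emph{obtuse case} where, say, the angle at $p_3$ exceeds $120^\circ$, the shortest tree is the two-edge path through $p_3$, so $\operatorname{smt}(M)=a+b$; here one needs to show $c\ge(\sqrt3-1)(a+b)$. The law of cosines gives $c^2\ge a^2+b^2+ab$, and a short calculation combined with $a^2+b^2\ge 2ab$ reduces this to the numerical inequality $8\sqrt3\ge 13$, which is clearly true.

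I expect no real obstacle beyond bookkeeping; the conceptual core is the one-line inequality $\sqrt{t^2+s^2+ts}\ge\tfrac{\sqrt3}{2}(t+s)$, which turns the generic case into a trivial summation and also pins down the regular triangle as the unique minimizer. The only place to be a bit careful is the transition between the two cases (the obtuse-angle branch is where one might otherwise expect the infimum, but the computation shows it is strictly worse than $\sqrt3/2$ away from the $120^\circ$ boundary).
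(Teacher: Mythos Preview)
The paper states this proposition without proof, so there is no argument to compare against; your proof must be judged on its own, and it is correct. The reduction to $\mathbb{R}^2$ is legitimate because three points are coplanar and orthogonal projection onto their affine span does not increase edge lengths, so an $\mathbb{R}^n$-Steiner tree can be replaced by a planar one of no greater length; the formula $\operatorname{mf}(M)=(a+b+c)/2$ from Subsection~\ref{subsec:triangle} then makes both bounds purely computational. In the Torricelli case your key inequality $\sqrt{t^2+s^2+ts}\ge\frac{\sqrt3}{2}(t+s)$ is exactly $(t-s)^2\ge0$ after squaring, and summing over the three pairs gives the desired $a+b+c\ge\sqrt3\,(t_1+t_2+t_3)$. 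In the obtuse case your chain $c^2\ge a^2+b^2+ab$ together with $a^2+b^2\ge2ab$ does reduce $c\ge(\sqrt3-1)(a+b)$ to the inequality $(2\sqrt3-3)(a^2+b^2)+(4\sqrt3-7)ab\ge(8\sqrt3-13)ab\ge0$, and $8\sqrt3>13$ holds since $192>169$. The boundary case of an angle equal to $120^\circ$ is covered by either branch (the Torricelli point degenerates to the obtuse vertex, $t_3=0$, and your first inequality still applies), and two-point subsets give ratio $1$, so the infimum over $|M|\le3$ is indeed $\sqrt3/2$, attained on the regular triangle.
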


The next result is obtained by E.\,I.~Filonenko, a student of Mechanical and Mathematical Department of Moscow State University, see~\cite{Filonenko}.

\begin{prop}[E.\,I.~Filonenko]
$\operatorname{ssr}_4({\mathbb R}^2)=\sqrt3/2$.
\end{prop}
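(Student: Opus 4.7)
The plan is to split the equality into the two matching inequalities.

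The upper bound $\operatorname{ssr}_4(\R^2)\le\sqrt{3}/2$ is immediate: the paper records that $\operatorname{ssr}_n(\mathcal X)$ is non-increasing in $n$, hence $\operatorname{ssr}_4(\R^2)\le\operatorname{ssr}_3(\R^2)=\sqrt{3}/2$ by the preceding proposition (with equality witnessed by a regular triangle, for which $\operatorname{mf}/\operatorname{smt}=\tfrac{3/2}{\sqrt{3}}=\sqrt{3}/2$). All of the content therefore lies in the matching lower bound
\[
\operatorname{mf}(M)\ \ge\ \frac{\sqrt{3}}{2}\,\operatorname{smt}(M)\qquad\text{for every four-point }M\subset\R^2.
\]

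To attack this, fix $M=\{p_1,p_2,p_3,p_4\}$ and set $\rho_{ij}=\|p_i-p_j\|$. The four-point discussion in Section~\ref{sec:examp} gives the closed form $\operatorname{mf}(M)=\tfrac12(S_{\min}+S_{\max})$, where $S_{\min}\le S_{\max}$ are the smallest and largest among the three opposite-pair sums $\rho_{12}+\rho_{34}$, $\rho_{13}+\rho_{24}$, $\rho_{14}+\rho_{23}$. I would then split into two geometric configurations. In the \emph{non-convex case} one point, say $p_4$, lies in the closed triangle $p_1p_2p_3$; here I would bound $\operatorname{smt}(M)$ from above by $\operatorname{smt}(\{p_1,p_2,p_3\})+d(p_4,T_3)$, where $T_3$ is the three-point Steiner tree, bound $\operatorname{mf}(M)$ from below using the closed form together with the triangle inequalities for $\rho_{i4}$, and then invoke the three-point estimate $\operatorname{mf}(\{p_1,p_2,p_3\})\ge\tfrac{\sqrt{3}}{2}\operatorname{smt}(\{p_1,p_2,p_3\})$ from the previous proposition to conclude.

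In the \emph{convex case}, label $p_1,\dots,p_4$ cyclically. In a convex quadrangle the diagonal sum dominates both sums of opposite sides, so Corollary~\ref{cor:conv4gon} specialises the formula to
\[
\operatorname{mf}(M)=\tfrac12(\rho_{13}+\rho_{24})+\tfrac12\min(\rho_{12}+\rho_{34},\,\rho_{14}+\rho_{23}).
\]
The Local Structure Theorem~\ref{th:LStr} forces any shortest tree on four points in $\R^2$ to have at most two Steiner points (each of degree $3$, with $120^\circ$ angles), so the Steiner topology is one of two full binary topologies (pairing opposite sides), possibly with a Steiner point degenerating to a boundary vertex. I would run one step of the Melzak construction for the pairing that realizes the minimum in the filling formula: erect regular triangles externally on the two paired sides, form apex points $A,B$, and use $\operatorname{smt}(M)\le\|AB\|$. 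Expanding $\|AB\|^2$ via the law of cosines (equivalently, a $60^\circ$ rotation around a common vertex of the equilateral triangles) then reduces the claim to a Euclidean inequality in the $\rho_{ij}$ whose equality case is the degenerate limit where two of the $p_i$ merge into one and the remaining three form a regular triangle.

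The principal obstacle is this sharp inequality in the convex case: the target is tight at the three-point equilateral degeneration, so the argument must exploit convex position finely enough to detect why the strict inequality cannot fail in the interior of configuration space. Auxiliary cleanup will include handling degenerate Steiner topologies (a Steiner point collapsing onto a boundary vertex, a quadrangle angle exceeding $120^\circ$, or two points nearly coincident) by reduction to the three-point proposition, and confirming that the pairing \emph{not} chosen in the Melzak step cannot produce a smaller $\|AB\|$, which follows from the consistent choice of $\min$ in the filling formula.
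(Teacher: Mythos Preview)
The paper does not contain a proof of this proposition; it only cites Filonenko's work~\cite{Filonenko}. So there is no in-paper argument to compare against, and your proposal must be judged on its own.

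Your reduction is sound: the upper bound is immediate from monotonicity and $\operatorname{ssr}_3(\R^2)=\sqrt3/2$, and the four-point filling formula together with Corollary~\ref{cor:conv4gon} is the right starting point for the lower bound. The convex/non-convex case split is also natural.

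The genuine gap is the one you name yourself. In the convex case you propose to upper-bound $\operatorname{smt}(M)$ by the Simpson length $\|AB\|$ for the Melzak pairing matching the minimal opposite-side sum, and then establish $\operatorname{mf}(M)\ge\tfrac{\sqrt3}{2}\|AB\|$. Two issues arise. First, this is \emph{strictly stronger} than what you need, since $\operatorname{smt}(M)$ may be smaller than $\|AB\|$ (wrong topology, or the full tree with that pairing fails to exist); you should check that you have not set yourself an impossible target. A test case: for the $1\times(1/\sqrt3)$ rectangle the inequality $\operatorname{mf}(M)\ge\tfrac{\sqrt3}{2}\|AB\|$ is an \emph{equality}, so the target is sharp on a genuine four-point configuration, not just in the three-point degeneration you mention. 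Second, and more importantly, you do not actually prove the inequality---you only say it ``reduces to a Euclidean inequality'' and flag it as ``the principal obstacle.'' That is the entire content of the theorem; without it, what you have is a plan, not a proof.

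The non-convex sketch is also too loose to stand. Bounding $\operatorname{smt}(M)$ above by $\operatorname{smt}(\{p_1,p_2,p_3\})+d(p_4,T_3)$ is fine, but you give no mechanism for bounding $\operatorname{mf}(M)$ below in a compatible way: the four-point filling formula $\tfrac12(S_{\min}+S_{\max})$ has no evident relation to $\operatorname{mf}(\{p_1,p_2,p_3\})=\tfrac12(\rho_{12}+\rho_{13}+\rho_{23})$, and ``triangle inequalities for $\rho_{i4}$'' is not enough of a hint to see how the pieces fit. You would need an explicit chain of inequalities here before this case can be considered handled.
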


\begin{conj}\label{conj:subrat}
The Steiner subratio of the Euclidean plane is achieved at the regular triangle and, hence, is equal to $\sqrt3/2$.
\end{conj}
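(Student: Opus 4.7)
The plan is to show $\operatorname{ssr}_n(\R^2) \geq \sqrt{3}/2$ for every $n \geq 3$. Since $\operatorname{ssr}_n$ is non-increasing in $n$ and the regular triangle realises the ratio $\sqrt{3}/2$ at $n=3$ (with $\mf = 3/2$ from the additive-simplex computation of Section~\ref{sec:examp} and $\smt = \sqrt{3}$ from the Torricelli construction), such a uniform lower bound would force $\operatorname{ssr}(\R^2) = \sqrt{3}/2$, attained at the equilateral triangle. Since the ratio $\mf(M)/\smt(M)$ is scale-invariant, the infimum defining $\operatorname{ssr}_n$ may be taken over configurations of unit diameter, and a compactness argument (permitting point collisions that simply drop $n$) shows that the infimum is attained on some extremal $M$, which becomes the object of study.

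For such an extremal $M$, by Theorem~\ref{th:LStr} the Steiner minimal tree $T$ is binary with $120^\circ$ angles at all Steiner points, and a small perturbation lets us assume $T$ is \emph{full}: every terminal has degree one and no edge is degenerate. To bound $\mf(M)$ from below I would invoke Eremin's formula $\mf(M) = \min_G \max_\pi p(M,G,\pi)$. It therefore suffices to exhibit, for every binary tree $G$ joining $M$, at least one multitour $\pi$ whose half-perimeter is $\geq (\sqrt{3}/2)\smt(M)$. A natural candidate is the tour inherited from a planar embedding of $G$ whose cyclic order on $M$ agrees with the one determined by $T$; using the geometric interpretation of the twisting number (at every Steiner point of $T$ one turns by $\pm\pi/3$), each distance $\|x - \pi(x)\|$ expands as a sum of projections of edge vectors of $T$ onto the six directions at angles $k\pi/3$, and the universal factor $\sqrt{3}/2$ is precisely what such projections average to over a full tour.

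In parallel I would try an induction on $n$ via a Melzak-style reduction. Choose a moustache $\{p,p'\}$ of $T$ with common Steiner point $s$, replace $p,p'$ by the Melzak apex $A_{pp'}$, and let $M'$ be the resulting $(n-1)$-point set; the identity $\smt(M) = \smt(M') + \tfrac{\sqrt3}{2}\|pp'\|$ is standard, and the inductive hypothesis bounds $\mf(M')$. Completing the induction requires a comparison of the form $\mf(M) \geq \mf(M') + \tfrac{\sqrt{3}}{2}\|pp'\|$, and this is the main obstacle: unlike $\smt$, the invariant $\mf$ is a global LP value with no clean behaviour under a Melzak move, since the distances from $p$ and $p'$ to the remaining terminals are not simply related to those from $A_{pp'}$. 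One might try to circumvent this by lifting via the Kuratowski realisation (Corollary~\ref{cor:Kur_mf}), rephrasing $\mf(M)$ as the length of a shortest network in $\R^n_\infty$ and arguing there; but tracking how a planar Melzak reduction lifts to $\R^n_\infty$ is itself subtle, and this comparison step is essentially the reason the conjecture has resisted a proof beyond the cases $n=3$ and $n=4$ (Filonenko).
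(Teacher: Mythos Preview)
The statement you are attempting to prove is presented in the paper as a \emph{conjecture}, not a theorem; the paper offers no proof, only the supporting evidence that $\operatorname{ssr}_3(\R^n)=\operatorname{ssr}_4(\R^2)=\sqrt3/2$. So there is no argument in the paper to compare your proposal against, and your text should be read as an attack on an open problem rather than a reconstruction of a known proof. You are candid that neither of your two routes closes, and that candour is appropriate: as written, the proposal is a strategy memo, not a proof.

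Two concrete issues are worth flagging. First, in the Eremin route you must lower-bound $\max_{\pi}p({\mathcal M},G,\pi)$ for \emph{every} binary tree $G$ joining $M$, since $\operatorname{mf}$ is a minimum over $G$. Your candidate tour is ``the one whose cyclic order on $M$ agrees with the order determined by $T$'', but that order is a tour of $G$ only when $G$ has the same planar combinatorial type as the Steiner tree $T$; for all other $G$ you have produced no multitour at all, and the projection heuristic about edge vectors of $T$ averaging to $\sqrt3/2$ has no obvious meaning for those $G$. Second, the Melzak identity you quote is incorrect: with $M'=\bigl(M\setminus\{p,p'\}\bigr)\cup\{A_{pp'}\}$ and $s$ the Steiner point of the moustache, Ptolemy (or the Simpson-line property) gives $|sA_{pp'}|=|sp|+|sp'|$, whence the full Steiner lengths satisfy $\smt(M)=\smt(M')$, not $\smt(M)=\smt(M')+\tfrac{\sqrt3}{2}\|pp'\|$. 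With the correct identity the inductive target becomes $\operatorname{mf}(M)\ge\operatorname{mf}(M')$, which is still not obvious (the distances from $A_{pp'}$ to the remaining terminals can exceed those from $p$ or $p'$), so the obstruction you identify persists, but the bookkeeping must be redone before any serious attempt.
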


Recently, A.~Pakhomova obtained an exact general estimate foe the degree $n$ Steiner subratio, see~\cite{Pakh}.

\begin{prop}[A.~Pakhomova]
For any metric space $\{\mathcal X\}$ the estimate
$$
\operatorname{ssr}_n({\mathcal X})\ge\frac{n}{2n-2}
$$
is valid. Moreover, this estimate is exact, i.e. for any $n\ge 3$ there exists a metric space ${\mathcal X}_n$ such that $\operatorname{ssr}_n({\mathcal X}_n)=n/(2n-2)$.
\end{prop}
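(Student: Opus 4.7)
The plan is to reduce the two parts of the statement to results already appearing in the paper. The inequality will follow from the preceding Steiner--Gromov estimate of Pakhomova together with the trivial bound $\operatorname{smt}\le\operatorname{mst}$, while the sharpness will be witnessed by the abstract regular simplex of Subsection~\ref{subsec:simplex}, exploiting the fact that in a finite ambient space no additional Steiner vertices are available.

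For the lower bound, fix any nontrivial $M\subset {\mathcal X}$ with $k=|M|\le n$. Since a minimal spanning tree is itself a connecting network for $M$, we have $\operatorname{smt}(M)\le\operatorname{mst}(M)$, hence
$$
\operatorname{ssr}(M)=\frac{\operatorname{mf}(M)}{\operatorname{smt}(M)}\ge\frac{\operatorname{mf}(M)}{\operatorname{mst}(M)}=\operatorname{sgr}(M)\ge\frac{k}{2k-2}\ge\frac{n}{2n-2},
$$
where the second-to-last inequality is the Pakhomova Steiner--Gromov estimate applied to $M$ (which in turn comes from the tour trick: walk around a planar embedding of a minimal filling ${\mathcal G}=(G,\omega)$, observe that the tour has total length $2\operatorname{mf}(M)$, use the filling property to deduce $\sum_{j}\rho(p_{i_j},p_{i_{j+1}})\le 2\operatorname{mf}(M)$ along the induced cyclic order of boundary visits, and then drop the largest of the $k$ summands to extract a spanning tree of weight at most $\tfrac{k-1}{k}\cdot 2\operatorname{mf}(M)$). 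The last inequality uses monotonicity: $k\mapsto k/(2k-2)=1/2+1/(2k-2)$ is decreasing for $k\ge 2$. Taking the infimum over $M$ yields $\operatorname{ssr}_n({\mathcal X})\ge n/(2n-2)$.

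For the sharp example, let ${\mathcal X}_n$ be the abstract $n$-point metric space in which all pairwise distances equal a fixed $d>0$, i.e.\ the metric of a regular simplex. By Subsection~\ref{subsec:simplex}, its minimal filling is the star of weight $nd/2$, so $\operatorname{mf}({\mathcal X}_n)=nd/2$. Because the ambient space of ${\mathcal X}_n$ is ${\mathcal X}_n$ itself, the infimum in $\operatorname{smt}_{{\mathcal X}_n}({\mathcal X}_n)=\inf_{{\mathcal X}_n\subset N\subset {\mathcal X}_n}\operatorname{mst}(N)$ is taken over $N={\mathcal X}_n$ only, so no Steiner vertices are available and $\operatorname{smt}({\mathcal X}_n)=\operatorname{mst}({\mathcal X}_n)=(n-1)d$. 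Therefore $\operatorname{ssr}({\mathcal X}_n)=n/(2n-2)$. Every nontrivial subset $M'\subset {\mathcal X}_n$ of size $k\le n$ is again a regular $k$-simplex, giving $\operatorname{ssr}(M')=k/(2k-2)\ge n/(2n-2)$ by the same monotonicity. Hence the infimum defining $\operatorname{ssr}_n({\mathcal X}_n)$ is attained at $M'={\mathcal X}_n$ and equals $n/(2n-2)$, matching the lower bound.

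The only delicate point is the identification $\operatorname{smt}=\operatorname{mst}$ in the sharp example; this is not a deep fact but requires reading the definition of $\operatorname{smt}_X$ literally as an infimum over finite supersets $N$ lying inside the prescribed ambient space $X$. Once one agrees to take $X={\mathcal X}_n$, the Steiner process has no room to add forks, and the star-shaped minimal filling of total weight $nd/2$ is then genuinely shorter than the spanning-tree length $(n-1)d$ by the required factor. The rest of the argument is a straightforward transfer from the preceding Steiner--Gromov bound: shortening by Steiner points can only decrease $\operatorname{smt}$ relative to $\operatorname{mst}$, and since $\operatorname{smt}$ sits in the denominator of $\operatorname{ssr}$, the inequality improves in the required direction.
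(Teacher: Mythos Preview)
Your argument is correct. The paper itself does not include a proof of this proposition; it merely records the result and refers to~\cite{Pakh}. There is therefore nothing to compare against, but your proof stands on its own: the lower bound follows immediately from $\operatorname{smt}\le\operatorname{mst}$ combined with the previously stated Pakhomova bound $\operatorname{sgr}_n(\mathcal X)\ge n/(2n-2)$ (and your parenthetical tour sketch of that bound is accurate once one uses that a minimal filling may be taken to be a binary tree), while the sharp example is the $n$-point regular simplex taken as its own ambient space, where $\operatorname{smt}=\operatorname{mst}=(n-1)d$ and $\operatorname{mf}=nd/2$ by Subsection~\ref{subsec:simplex}.

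One cosmetic remark: the detour through the per-subset inequality $\operatorname{sgr}(M)\ge k/(2k-2)$ followed by the monotonicity of $k\mapsto k/(2k-2)$ is unnecessary. It is cleaner to write directly
\[
\operatorname{ssr}_n(\mathcal X)=\inf_{|M|\le n}\frac{\operatorname{mf}(M)}{\operatorname{smt}(M)}\ge\inf_{|M|\le n}\frac{\operatorname{mf}(M)}{\operatorname{mst}(M)}=\operatorname{sgr}_n(\mathcal X)\ge\frac{n}{2n-2},
\]
which avoids invoking the degree-$k$ bound for each $k\le n$ separately.
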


Also recently, Z.~Ovsyannikov~\cite{Ovs-h} investigated the metric space of all compact subsets of Euclidean plane endowed with Hausdorff metric.

\begin{prop}[Z.~Ovsyannikov]
Let ${\mathcal C}$ be the metric space of all compact subsets of Euclidean plane endowed with Hausdorff metric. Then $\operatorname{ssr}_3({\mathcal C})=3/4$ and  $\operatorname{ssr}_4({\mathcal C})=2/3$.
\end{prop}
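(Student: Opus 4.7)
The plan is to bracket each equality between matching lower and upper bounds. The lower bounds are immediate from Pakhomova's general estimate stated just above: since $({\mathcal C},d_H)$ is a metric space, $\operatorname{ssr}_n({\mathcal C})\ge n/(2n-2)$, so $\operatorname{ssr}_3({\mathcal C})\ge 3/4$ and $\operatorname{ssr}_4({\mathcal C})\ge 2/3$.

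For the upper bound at $n=3$, I would exhibit a concrete $3$-element subset of ${\mathcal C}$ realizing $3/4$. Let $A_1,A_2,A_3\subset\R^2$ be the three pairs of diametrically opposite vertices of the regular hexagon inscribed in the unit circle. A direct check shows $d_H(A_i,A_j)=1$ for $i\ne j$, so $\cM=\{A_1,A_2,A_3\}$ is an equilateral $3$-point metric space of side $1$; by the triangle example of Subsection~\ref{subsec:triangle}, $\mf(\cM)=3/2$, and clearly $\mst(\cM)=2$. Taking $X=A_1$ gives $\sum_i d_H(X,A_i)=0+1+1=2$, hence $\smt_{\mathcal C}(\cM)\le 2$. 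For the matching lower bound, for any non-empty compact $X\in{\mathcal C}$ and any $x_0\in X$ I would chain
$$
\sum_{i=1}^{3} d_H(X,A_i)\ \ge\ \sum_{i=1}^{3}\sup_{x\in X} d(x,A_i)\ \ge\ \sum_{i=1}^{3} d(x_0,A_i)\ \ge\ \inf_{y\in\R^2}\sum_{i=1}^{3} d(y,A_i),
$$
and then verify that the rightmost infimum equals $2$ by a brief Voronoi-type computation: in the Voronoi cell of each hex vertex $v$, the sum $\sum_i d(y,A_i)$ reduces to the sum of Euclidean distances from $y$ to three hex vertices forming an isoceles triangle with a $120^\c$ angle at $v$, whose Fermat point is $v$ itself with Fermat distance $1+1=2$; the six-fold symmetry then delivers the global infimum. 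This gives $\smt_{\mathcal C}(\cM)=2$ and $\operatorname{ssr}(\cM)=3/4$.

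For $n=4$ the strategy is entirely parallel: find four compact sets $A_1,\ldots,A_4\subset\R^2$ with pairwise $d_H(A_i,A_j)=d$ and with the pointwise bound $\inf_{y\in\R^2}\sum_{i=1}^{4} d(y,A_i)\ge 3d$. From the first property $\cM=\{A_i\}$ is a regular $4$-point simplex in $({\mathcal C},d_H)$ with $\mf(\cM)=2d$ and $\mst(\cM)=3d$; from the second, the same sandwich argument forces $\smt_{\mathcal C}(\cM)\ge 3d$, which combined with the upper bound $\smt\le\mst=3d$ (again witnessed by $X=A_1$) gives $\operatorname{ssr}(\cM)=2/3$.

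The hard step, and the main obstacle of the whole proof, is the construction in the $n=4$ case. Since $\R^2$ admits no four equidistant points, the $A_i$ must be genuinely non-singleton compacts, arranged symmetrically enough both to equalize the six pairwise Hausdorff distances and to make the pointwise sum $\sum_i d(\cdot,A_i)$ attain the threshold $3d$ on the entire plane. Identifying the right quadruple (in the spirit of the hexagon-diameter family used for $n=3$, but with additional structure forbidding any useful planar Steiner compact) and carrying out the planar Voronoi-type verification of the pointwise bound together constitute the technical core of the argument.
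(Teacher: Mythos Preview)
The paper does not prove this proposition; it is quoted from Ovsyannikov's paper~\cite{Ovs-h} without argument, so there is no ``paper's own proof'' to compare against. I can therefore only assess your proposal on its own merits.

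Your $n=3$ argument is sound in outline: the hexagon-diameter triple is indeed an equilateral $3$-point space in $({\mathcal C},d_H)$, the minimal filling weight is $3/2$, and your pointwise chain correctly shows that a single Steiner compact $X$ cannot beat $\operatorname{mst}=2$, since for three boundary points the only non-spanning competitor topology is the star. Combined with Pakhomova's lower bound $\operatorname{ssr}_3\ge 3/4$, this closes the $n=3$ case.

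The $n=4$ case, however, has a genuine gap beyond the missing construction you flag. Your ``same sandwich argument'' controls only the star topology: from $d_H(X,A_i)\ge d(x_0,A_i)$ you bound $\sum_{i=1}^4 d_H(X,A_i)$ below by a one-point sum. But for four boundary points the Steiner competitor may well be a \emph{binary} tree with two Steiner compacts $X,Y$ and edge set $\{XA_1,XA_2,XY,YA_3,YA_4\}$; here no single point $x_0$ sees all four $A_i$ through the tree, and the edge $XY$ cannot be charged twice. The pointwise condition $\inf_{y}\sum_i d(y,A_i)\ge 3d$ therefore does not by itself force $\operatorname{smt}_{\mathcal C}(\cM)\ge 3d$. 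You would need either a separate lower bound for the binary topology (e.g.\ a two-point inequality of the form $d(x_0,A_1)+d(x_0,A_2)+|x_0-y_0|+d(y_0,A_3)+d(y_0,A_4)\ge 3d$ for all $x_0,y_0\in\R^2$, which is a strictly stronger requirement on the $A_i$), or a different mechanism altogether. Until both the explicit quadruple and this binary-tree lower bound are supplied, the $n=4$ half remains unproved.
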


\medskip

\end{document}